\newcommand{\R}{\mathbb{R}}
\newcommand{\N}{\mathbb{N}}
\newcommand{\Rn}{\R^n}
\newcommand{\Sn}{\mathbb{S}^{n-1}}
\renewcommand{\O}{\Omega}
\newcommand{\Od}{\O_{\delta}}
\newcommand{\Wd}{\omega_{\delta}}
\newcommand{\dx}{\,\mathrm{d}x}
\newcommand{\grad}{\ddot{\mathcal{G}}_{\delta}}
\newcommand{\diver}{\mathcal{D}_{\delta}}
\newcommand{\U}{\mathbb{U}}
\newcommand{\Uz}{\U_0}
\newcommand{\Udz}{\U_{\delta,0}}
\newcommand{\Q}{\mathbb{Q}}
\newcommand{\Qd}{\mathbb{Q}_{\delta}}
\newcommand{\knl}{{\ddot{\kappa}}}
\newcommand{\qnl}{{\ddot{\sigma}}}
\newcommand{\qnldf}{{\ddot{\sigma}_{\delta,f}}}
\newcommand{\pnl}{{\ddot{\tau}}}
\newcommand{\Id}{{\hat{I}}}
\newcommand{\Idloc}{\Id_{\text{loc}}}
\newcommand{\kadm}{\mathbb{A}}
\newcommand{\kadmd}{\kadm_{\delta}}
\newcommand{\wto}{\rightharpoonup}
\newtheorem{theorem}{Theorem}[section]
\newtheorem{lemma}[theorem]{Lemma}
\newtheorem{proposition}[theorem]{Proposition}
\newtheorem{corollary}[theorem]{Corollary}
\theoremstyle{remark}
\theoremstyle{definition}
\DeclareMathOperator{\graph}{graph}
\DeclareMathOperator{\trace}{tr}
\DeclareMathOperator{\Div}{div}
\DeclareMathOperator{\supp}{supp}
\DeclareMathOperator{\sign}{sign}
\DeclareMathOperator*{\argmin}{arg\,min}
\title{The dual approach to optimal control in the coefficients of nonlocal nonlinear diffusion}
\author{
Marcus Schytt\\
Department of Mathematical Sciences\\
Aalborg University\\
DK--9210 Aalborg, Denmark\\
\And
\href{https://orcid.org/0000-0002-3987-7745}{\includegraphics[scale=0.06]{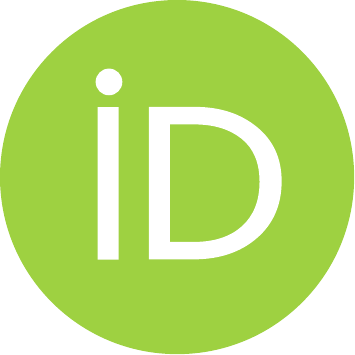}\hspace{1mm}Anton Evgrafov}\\
Department of Mathematical Sciences\\
Aalborg University\\
DK--9210 Aalborg, Denmark\\
\texttt{anev@math.aau.dk}
}
\begin{document}
\maketitle

\begin{abstract}
    We derive the dual variational principle (principle of minimal complementary energy) for the nonlocal nonlinear scalar diffusion problem, which may be viewed as the nonlocal version of the \(p\)-Laplacian operator.
    We establish existence and uniqueness of solutions (two-point fluxes) as well as their quantitative stability, which holds uniformly with respect to the small parameter (nonlocal horizon) characterizing the nonlocality of the problem.
    We then focus on the nonlocal analogue of the classical optimal control in the coefficient problem associated with the dual variational principle, which may be interpreted as that of optimally distributing a limited amount of conductivity in order to minimize the complementary energy.
    We show that this nonlocal optimal control problem \(\Gamma\)-converges to its local counterpart, when the nonlocal horizon vanishes.
\end{abstract}

\keywords{Optimal design \and nonlocal diffusion \and mixed variational principles \and \(p\)-Laplacian}

\noindent  \paragraph*{MSC2010 classification:} 49J21, 49J45, 49J35, 80M50



\section{Introduction}\label{sec:intro}

More than half a century ago C{\'e}a and Malanowski~\cite{cea1970example} introduced an optimization problem, which can be viewed as that of distributing a limited amount of ``conductivity'' in a given design domain in order to minimize the  weighted average steady-state temperature for a given volumetric heat source.
This seminal paper catalyzed the development of what is nowadays referred to as topology optimization, see for example the monographs~\cite{cherkaev2012variational,allaire2012shape,bendsoe2013topology} and the references therein.
The original problem has been generalized and extended in countless ways.
We will state it here almost in its original form, except we will assume that the linear diffusion governed by the Laplace operator is replaced by a nonlinear diffusion governed by the  \(p\)-Laplace operator, \(1<p<+\infty\).
Namely, let \(\O\) be a connected bounded open Lipschitz domain in \(\R^n\), \(n \geq 2\), and \(f \in L^q(\O)\) with \(p^{-1}+q^{-1}=1\) be a given volumetric heat source.
We would like to find the optimal distribution of the conductivity \(\kappa \in \kadm \subset L^\infty(\O)\) and the corresponding temperature field \(u \in \Uz = W^{1,p}_0(\O)\), solving the following convex saddle-point problem:
  \begin{align}
    p_{\text{loc}} &:= \sup_{\kappa \in \kadm} \check{\imath}_{\text{loc}}(\kappa),  &\qquad& \text{where} \label{eq:local_saddle} \\
    \check{\imath}_{\text{loc}}(\kappa) &:= \inf_{u \in \Uz} \check{I}_{\text{loc}}(\kappa;u),  \label{eq:local_primal} \\
    \check{I}_{\text{loc}}(\kappa;u) &:= \frac{1}{p}\int_{\O} \kappa(x)|\nabla u(x)|^p\dx - \ell(u), &\qquad& \text{and} \label{eq:local_primal1}\\
    \ell(u) &:= \int_{\O} f(x)u(x)\dx. \label{eq:ell}
  \end{align}
For both physical and solvability reasons we will assume that there are constants \(0<\underline{\kappa}\leq \overline{\kappa}<+\infty\), such that
\begin{equation}\label{eq:kadm_loc}
    \underline{\kappa}\leq \kappa(x) \leq \overline{\kappa}, \qquad \text{a.e.\ in \(\O\)}, \forall \kappa \in \kadm,
\end{equation}
and that the admissible set of conductivities \(\kadm\) is nonempty, convex, and weakly\(^*\) compact in \(L^\infty(\O)\).
A model example of \(\kadm\) is given by
\[
\kadm = \bigg\{\, \kappa \in L^\infty(\O)
    \mid \kappa(x) \in [\underline{\kappa},\overline{\kappa}], \text{\ for almost all \(x\in\O\)},
    \int_{\O} \kappa(x)\dx \leq V
    \,\bigg\},
\]
where \(V>0\) is an upper bound on the total available conductivity.

We now outline two ideas, which provide the proper context for this work.

\subsection{Dual formulation of the state problem}
\label{subsec:dual}

It is often desirable to replace the ``inner'' optimization problem~\eqref{eq:local_primal} in the saddle point problem~\eqref{eq:local_saddle} with its convex conjugate.
In the present case, this leads to the following dual variational statement:
\begin{align}
    \hat{\imath}_{\text{loc}}(\kappa) &= \inf_{\sigma \in \Q(f)} \hat{I}_{\text{loc}}(\kappa;\sigma),
    &\qquad&\text{where} \label{eq:local_dual}\\
    \hat{I}_{\text{loc}}(\kappa;\sigma) &= \frac{1}{q}\int_{\O} \kappa^{1-q}(x)|\sigma(x)|^q\dx,\\
    \Q &= \{\, \tau \in L^q(\O;\Rn) \colon \Div\tau \in L^q(\O)\,\},
\end{align}
\(\Div\) is the distributional divergence operator, and \(\Q(f) = \{\, \tau \in \Q \colon \Div\tau =f \,\}\) is an affine subspace of \(\Q\).
Furthermore, \(\Q\) is a separable, reflexive Banach space with respect to the graph norm \(\|\sigma\|_{\Q}^q = \|\sigma\|_{L^q(\O;\Rn)}^q + \|\Div \sigma\|_{L^q(\O)}^q\),
which also makes \(\Q(f)\) into a closed subset of \(\Q\).

As we shall recall, the problem~\eqref{eq:local_dual} is solvable for each \(f\in L^q(\O)\), and the equality \(\check{\imath}_{\text{loc}}(\kappa) + \hat{\imath}_{\text{loc}}(\kappa) = 0\) holds.
This allows us to equivalently restate the problem~\eqref{eq:local_saddle} as
\begin{equation}\label{eq:local_min}
    d_{\text{loc}} := \inf_{\kappa \in \kadm} \hat{\imath}_{\text{loc}}(\kappa). 
\end{equation}

The numerous reasons for considering~\eqref{eq:local_min} in lieu of~\eqref{eq:local_saddle} may now become clearer.
Physically, the conjugate variables \(\sigma\) provide us with valuable information about the propagation of the quantity \(u\) through the heterogeneous domain \(\O\);  for example in the case of heat conduction they correspond to heat fluxes.
Algorithmically, one may interchange the order of minimization in~\eqref{eq:local_min} and~\eqref{eq:local_dual}, or indeed minimize simultaneously with respect to the conduction coefficients and the fluxes, see for example~\cite{evgrafov2015chebyshev,kocvara2016,kocvara2020,ioannis:2021}.
Analytically, one derives useful optimality conditions with respect to \(\kappa\) from the assumed knowledge of optimal fluxes \(\sigma\), see for example~\cite{cherkaev2012variational,allaire2012shape,bendsoe2013topology,evgrafov2021nonlocal} and the references therein.

\subsection{Nonlocal physics}
\label{subsec:nlphys}

Another avenue, along which~\eqref{eq:local_saddle} may be developed, is to assume that the diffusion is governed by a \emph{nonlocal} operator instead of a differential one.  Nonlocal models have become one of the active research directions lately owing to their numerous attractive mathematical properties and applications, see for example~\cite{SiLe10,BuVal,AnMaRoTo10,du,madenci_oterkus}.
In particular, nonlocal models arise naturally when long-range interactions between points in space and/or in time are considered. Nonlocality appears in a variety of contexts, such as image processing~\cite{GiOs08}, pattern formation~\cite{Fife03}, population dispersal~\cite{CoCoElMa07}, nonlocal diffusion~\cite{AnMaRoTo10,BuVal}, nonlocal characterization of Sobolev spaces~\cite{bourgain2001another,ponce2004estimate,ponce2004new} and fractional Laplacian \cite{fractional_laplacian} and applications of these.  In the current work we will focus on peridynamics, a nonlocal paradigm in solid mechanics, see for example~\cite{SiLe10,madenci_oterkus}.

We will utilize a model which corresponds to the bond-based peridynamic model of diffusion.  The ``degree of nonlocality'' of the model will be characterized by a small parameter  \(\delta>0\), which will be referred to as the nonlocal interaction horizon. In this model only the points, which are less than \(\delta\) distance units apart, may interact with each other.  To this end we let \(\Od = \cup_{x\in \O} B(x,\delta)\) be the domain \(\Omega\) including the nonlocal ``\(\delta\)-halo''/``collar''/boundary \(\Gamma_{\delta} = \Od\setminus\O\), where \(B(x,\delta) = \{\, z \in \Rn : |z-x|<\delta\,\}\).  The strength of the nonlocal interactions will be characterized by a radial kernel \(\Wd: \Rn\to \R_+\), which is supported in \(B(0,\delta)\) and is normalized to satisfy
\begin{equation}\label{eq:normalization}
    \int_{\Rn} |x|^p\Wd^p(x)\dx = K_{p,n}^{-1},\qquad \text{where}\qquad
    K_{p,n} = |\Sn|^{-1}\int_{\Sn} |e\cdot s|^p \,\mathrm{d}s,
\end{equation}
\(\Sn\) is the unit sphere in \(\Rn\), and \(e\in \Sn\) is arbitrary, see~\cite{bourgain2001another}.
Owing to the radial nature of \(\Wd\), sometimes we will write \(\Wd(|x|)\) instead of \(\Wd(x)\), in particular in connection with integration in spherical coordinates.
A typical example of \(\Wd\) is given by 
\[\Wd(x) = \begin{cases}c_{\delta,\alpha}|x|^{\alpha}, &\qquad |x|\leq \delta,\\
  0, &\qquad \text{otherwise},
\end{cases}
\] where \(\alpha \in (-1-n/p,-1]\) is a modelling parameter, and the normalization constant \(c_{\delta,\alpha}>0\) is computed from the normalization condition above.
More singular behaviour of \(\Wd\)  at zero (that is, more negative values of \(\alpha\)) results in higher solution regularity to nonlocal diffusion problems.

The linear operator \(\grad : D(\grad)\subseteq L^p(\O) \to L^p(\Od\times\Od)\) defined by
\begin{equation}\label{eq:nlgrad}\grad u(x,x') = [u(x)-u(x')]\Wd(x-x'), \quad (x,x')\in \Rn\times\Rn,\end{equation}
will be called the nonlocal gradient, where we employ the convention that functions are extended to vanish outside of their explicit domain of definition, unless explicitly stated otherwise.
We will recall and summarize its properties in Proposition~\ref{prop:grad0}.
For now, it is sufficient to state that 
\[
  \Udz = D(\grad) = \{\, u \in L^p(\O) \colon u|_{\R^n \setminus \O} \equiv 0,
  \grad u \in L^p(\Od\times\Od) \,\},
\]
is a separable, reflexive Banach space when equipped with the norm \(\|u\|_{\Udz} = \|\grad u\|_{L^p(\Od\times\Od)}\), which is furthermore equivalent (uniformly for small \(\delta>0\)) to the graph norm associated with the operator \(\grad\), i.e., \([\|u\|^p_{L^p(\O)}+\|\grad u\|^p_{L^p(\Od\times\Od)}]^{1/p}\).
The central result on which this construction hinges is the nonlocal Poincar{\'e} inequality, see for example~\cite{bourgain2001another,ponce2004estimate,ponce2004new}.

With this in mind we can now state the direct nonlocal analogue of~\eqref{eq:local_saddle}--\eqref{eq:local_primal1}:
\begin{align}
  p_{\delta} &:= \sup_{\ddot{\kappa} \in \kadm_{\delta}} \check{\imath}_{\delta}(\ddot{\kappa}),  &\qquad& \text{where} \label{eq:nl_saddle} \\
  \check{\imath}_{\delta}(\ddot{\kappa}) &:= \inf_{u \in \Udz} \check{I}_{\delta}(\ddot{\kappa};u),  \label{eq:nl_primal} \\
  \check{I}_{\delta}(\ddot{\kappa};u) &:= \frac{1}{p}\int_{\Od}\int_{\Od} \ddot{\kappa}(x,x')|\grad u(x,x')|^p\dx\dx' - \ell(u). \label{eq:nl_primal1}
\end{align}
The coefficient \(\ddot{\kappa}\) may be thought of as the ``nonlocal conductivity,'' in the sense that \(\ddot{\kappa}(x,x')\) provides an \emph{additional} characterization of the bond strength between the points \((x,x')\in \Od\times\Od\).  We emphasize the word additional, because in this model the bond strength depends not only on the distance between the points, which is accounted for in the radial kernel \(\Wd\), but also on what can be thought of as the material property \(\ddot{\kappa}(x,x')\).
The quantity \(\check{I}_{\delta}(\ddot{\kappa};u)\) defined in~\eqref{eq:nl_primal1} is the direct analogue of the usual, local, Dirichlet/potential energy given by~\eqref{eq:local_primal1} associated with \(p\)-Laplacian.
For now the exact structure of the set of admissible nonlocal conductivities  \(\kadmd\) is not going to be of great importance.  Similarly to~\eqref{eq:kadm_loc}, we will assume that nonlocal conductivities satisfy the bounds
\begin{equation}\label{eq:kadm_nlloc}
    \underline{\kappa}\leq \ddot{\kappa}(x,x') \leq \overline{\kappa}, \qquad \text{a.e.\ in \(\Od\times \Od\)}, \forall \ddot{\kappa} \in \kadm_{\delta},
\end{equation}
for some constants  \(0<\underline{\kappa}\leq \overline{\kappa}<+\infty\).
Similarly, we will assume that  \(\kadm_{\delta}\) is a nonempty, convex, and weakly\(^*\) compact set in \(L^\infty(\Od\times\Od)\).
Finally, we will assume that all nonlocal conductivities are \emph{symmetric}, that is,
\begin{equation}\label{eq:kappa_symm}
  \ddot{\kappa}(x,x') = \ddot{\kappa}(x',x), \qquad\text{for almost all \((x,x')\in \Od\times\Od\).}
\end{equation}
The symmetry assumption on \(\ddot{\kappa}(x,x')\) is reasonable because the quantity \(|\grad u(x,x')|^p\) satisfies it, and consequently the Dirichlet energy \(\check{I}_{\delta}(\ddot{\kappa};u)\) depends only on the symmetric part of \(\ddot{\kappa}(x,x')\).

In the context of nonlocal \(p\)-Laplacian the well-posedness of the state problem has been analyzed in for example in~\cite{hindsraduplap} utilizing the nonlocal vector calculus of \cite{gunzburgercalculus}, see also~\cite{bellidoexistence}. 
The corresponding saddle point problem~\eqref{eq:nl_saddle} has been introduced and studied in~\cite{andres2015nonlocal,andres2017convergence,evgrafov2019non,AnMuRo21,andres2023minimization}, even if in certain cases only for \(p=2\).

The especially interesting research question for these models is whether they can approximate the local models, in some apporiate sense,\footnote{For optimization problems one typically utilizes the language of \(\Gamma\)-convergence to formulate and answer such questions.} when one considers the limit \(\delta\to 0\) in for example~\eqref{eq:nl_primal} or~\eqref{eq:nl_saddle}.
These questions have been  answered affirmatively in the cited references,\footnote{We note that in order to properly formulate the  question of convergence for~\eqref{eq:nl_primal}, we need to make further assumptions about the relationship between the admissible nonlocal and local conductivities, i.e., \(\kadmd\) and \(\kadm\).
The reasonable assumption is that an admissible \(\ddot{\kappa}(x,x')\) is an average, typically, but not necessarily arithmetic, of admissible \(\kappa(x)\) and \(\kappa(x')\).  How one defines \(\kappa|_{\Od\setminus\O}\) is not very important.} with the critical tools provided in~\cite{bourgain2001another,ponce2004new,ponce2004estimate} and their generalization to problems with heterogeneous coefficients~\cite{munoz2021generalized,JulioMunoz2021CtGP}.

\subsection{The contribution of this work}

The main contribution of this work effectively lies in combining the lines of thought outlined in Subsections~\ref{subsec:dual} and~\ref{subsec:nlphys}.
More specifically, we will define the dual variational principle corresponding to the nonlocal state problem~\eqref{eq:nl_primal}.  We will demonstrate its well-posedness, strong duality with the primal statement~\eqref{eq:nl_primal}, and finally carry out the analysis of convergence towards the local problems as \(\delta \to 0\).
This plan has been successfully executed in the quadratic case \(p=2\) in~\cite{evgrafov2021dual}.
Its generalization to the case \(p \in (1,\infty)\)  is not quite straightforward. In particular, in the non-quadratic case we may no longer rely upon the global stability of solutions to the mixed linear variational principles (uniformly with respect to small \(\delta>0\)).
Furthermore, the explicit construction of a ``recovery sequence'' involved in obtaining a lower bound for the local problem (see Subsection~\ref{subsec:gammaconv_limsup} and Section~\ref{sec:Fconsist}) relies upon a nonlinear operator in the present case, which is also interesting.

Owing to the strong duality, the \(\Gamma\)-convergence result for the dual nonlocal problem is nearly identical to the one mentioned in Section~\ref{subsec:nlphys}, see for example~\cite{munoz2021generalized,JulioMunoz2021CtGP,AnMuRo21,andres2023minimization}\footnote{The only difference is that we assume arithmetic averaging of resistivities \(\kappa^{1-q}(x)\) and \(\kappa^{1-q}(x')\) when computing \(\ddot{\kappa}^{1-q}(x,x')\) and not the arithmetic averaging of conductivities; see the previous footnote.}.  However, our approach utilizes different ideas, namely the stability of optimal values to convex constrained optimization problems, and is of interest on its own merits.

\subsection{Outline}

The outline of the rest of this paper is as follows.
In section~\ref{sec:nldiver} we introduce the nonlocal divergence operator, state the dual variational statement for the nonlocal \(p\)-Laplacian, and establish its well-posedness and quantitative stability.
With these results in place, in Section~\ref{sec:nlcontrol} we state the nonlocal analogue of~\eqref{eq:local_dual}, which is the main subject of study in the present work, and establish its well-posedness.
Finally, Sections~\ref{sec:localization} and~\ref{sec:Fconsist} deal with the question of \(\Gamma\)-convergence of the nonlocal control in the coefficients towards its local counterpart.
We have chosen to collect most of the computations needed to establish the consistency of the limit of the recovery sequence constructed in Subsection~\ref{subsec:gammaconv_limsup} and needed for \(\Gamma\)-convergence into Section~\ref{sec:Fconsist}.

\section{Nonlocal divergence and the nonlocal dual variational principle for \(p\)-Laplacian}
\label{sec:nldiver}

Let us begin this section by summarizing the well-known but pertinent properties of the nonlocal gradient operator defined in~\eqref{eq:nlgrad} for future reference.
\begin{proposition}\label{prop:grad0}
  The following statements hold.
  \begin{enumerate}
    \item \label{grad0:dense} The domain \(D(\grad)=\Udz\) is dense in \(L^p(\Omega)\).
    \item \label{grad0:clgr} The graph of \(\grad\) is closed in \(L^p(\Omega)\times L^p(\Od\times\Od)\).
    \item \label{grad0:poincare} There exist constants $\overline{\delta}>0$ and $C>0$ such that \(\|u\|_{L^p(\Omega)} \leq C \|\grad u\|_{L^p(\Od \times \Od)}\) for all $u \in \Udz$ and $\delta \in (0,\overline{\delta})$.
    In particular, \(\grad\) is injective.
    \item The range of \(\grad\), \(R(\grad)\), is closed in \(L^p(\Od \times \Od)\).
    \item \label{grad0:banach} \(\Udz\) with the norm \(\|u\|_{\Udz} = \|\grad u\|_{L^p(\Od \times \Od)}\) is a separable, reflexive Banach space.
  \end{enumerate}
\end{proposition}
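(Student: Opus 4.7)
The plan is to prove the five items in the order stated, exploiting the fact that items~4 and~\ref{grad0:banach} are essentially formal consequences of items~\ref{grad0:clgr} and~\ref{grad0:poincare}. For item~\ref{grad0:dense} I would verify the inclusion $C_c^\infty(\O)\subset\Udz$ and then invoke the standard density of $C_c^\infty(\O)$ in $L^p(\O)$. The inclusion follows because for $u\in C_c^\infty(\O)$ extended by zero, the mean value theorem yields $|u(x)-u(x')|\leq \|\nabla u\|_\infty |x-x'|$, so after the change of variables $z=x-x'$ and using the normalization~\eqref{eq:normalization},
\[
  \|\grad u\|_{L^p(\Od\times\Od)}^p \leq \|\nabla u\|_\infty^p\, |\Od|\int_{\Rn} |z|^p\Wd(z)^p\,\mathrm{d}z = \|\nabla u\|_\infty^p\, |\Od|\,K_{p,n}^{-1}.
\]
For item~\ref{grad0:clgr}, given $u_n\to u$ in $L^p(\O)$ and $\grad u_n\to g$ in $L^p(\Od\times\Od)$, I would extract a subsequence along which $u_n\to u$ pointwise a.e.\ in $\O$; then $\grad u_n(x,x')\to [u(x)-u(x')]\Wd(x-x')=\grad u(x,x')$ pointwise a.e.\ in $\Od\times\Od$, so uniqueness of $L^p$-limits along subsequences identifies $g$ with $\grad u$.

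For item~\ref{grad0:poincare} I intend to invoke directly the nonlocal Poincar\'e inequality originating in~\cite{bourgain2001another,ponce2004estimate,ponce2004new}, applied to the zero extension of $u$ outside $\O$; the normalization~\eqref{eq:normalization} together with the boundedness of $\O$ guarantees the correct scaling and the existence of constants $\overline{\delta}$ and $C$ independent of $\delta\in(0,\overline{\delta})$. Injectivity of $\grad$ is then immediate. This step is the only real obstacle: it rests on a nontrivial analytic result, whereas everything else unfolds formally from it combined with items~\ref{grad0:dense} and~\ref{grad0:clgr}.

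For item~4, if $g_n=\grad u_n\to g$ in $L^p(\Od\times\Od)$, then $\{\grad u_n\}$ is Cauchy, hence so is $\{u_n\}$ in $L^p(\O)$ by item~\ref{grad0:poincare}; any limit $u$ of such a Cauchy sequence satisfies $g=\grad u\in R(\grad)$ by item~\ref{grad0:clgr}. For item~\ref{grad0:banach}, the Poincar\'e inequality ensures that $\|\cdot\|_{\Udz}$ is a genuine norm, and an identical Cauchy-sequence argument establishes completeness. To obtain separability and reflexivity I would observe that $\grad$ is an isometric embedding of $\Udz$ onto the closed subspace $R(\grad)$ of $L^p(\Od\times\Od)$; since $L^p(\Od\times\Od)$ is separable and reflexive for $1<p<+\infty$, the closed subspace $R(\grad)$ inherits both properties, which then transfer to $\Udz$ through the isometric isomorphism $\grad^{-1}|_{R(\grad)}$.
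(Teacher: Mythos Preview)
Your proposal is correct and follows essentially the same approach as the paper: the same density argument via smooth compactly supported functions and the mean value bound, the same pointwise-a.e.\ subsequence argument for the closed graph, the same invocation of the nonlocal Poincar\'e inequality from the literature, and the same inheritance of separability and reflexivity from a closed subspace of an $L^p$-type space. The only cosmetic difference is in item~\ref{grad0:banach}: you embed $\Udz$ isometrically onto $R(\grad)\subset L^p(\Od\times\Od)$ (using item~4), whereas the paper embeds it onto $\graph\grad\subset L^p(\O)\times L^p(\Od\times\Od)$ via the graph norm (using item~\ref{grad0:clgr}) and then passes to the equivalent norm---both routes are equally valid and equally short.
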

\begin{proof}
  \begin{enumerate}
    \item We observe that \(\forall \phi \in C^1_c(\O)\) we have
    \(|\grad \phi(x,x')| \leq \|\nabla \phi\|_{C(\text{cl}\O;\Rn)} |x-x'|\Wd(x-x')\).
    In view of~\eqref{eq:normalization} we can conclude that  \(L^p(\O) = \text{cl} C^1_c(\O) \subseteq \text{cl} D(\grad) \subseteq L^p(\O)\).
    \item  Convergence in \(L^p\) implies a.e.\ pointwise convergence, up to a subsequence; \(\grad\) is defined pointwise.
    \item   The proof of the fact that the constant in the nonlocal Poincar{\'e}-type inequality may be chosen uniformly for all small \(\delta>0\) is found in for example~\cite[Lemma~4.2]{evgrafov2019non} for \(p=2\). The proof holds verbatim for \(1<p<\infty\).
    \item This is shown in~\cite[Theorem~2.21]{brezis2010functional} as a consequence of the previous points.
    \item We follow the standard argument and consider \(\Udz=D(\grad)\) equipped with the graph norm \(\|u\|^p=
    \|u\|_{L^p(\O)}^p + \|\grad u\|_{L^p(\Od\times\Od)}^p\), and an isometry
    \(T: \Udz \to L^p(\O) \times L^p(\Od\times\Od)\) given by \(Tu=(u,\grad u)\).
    The point~\ref{grad0:clgr} implies that \(R(T)=\graph \grad\) is a closed subspace of the the reflexive and separable Banach space \(L^p(\O) \times L^p(\Od\times\Od)\), and is consequently also reflexive, separable, and complete~\cite[Propositions~3.20, 3.25]{brezis2010functional}.
    The same properties hold for \(\Udz\) equipped with the graph norm because \(T\) is a linear isometry.
    Finally, the conclusion does not change when we equip \(\Udz\) with the norm \(\|u\|_{\Udz} = \|\grad u\|_{L^p(\Od \times \Od)}\), since the this norms is equivalent to the graph norm in view of~\ref{grad0:poincare}.\qedhere
  \end{enumerate}
\end{proof}

Owing to the density of \(D(\grad)\) in \(L^p(\O)\) we can define a negative adjoint operator \(\diver: D(\diver) \subset L^q(\Od\times\Od) \to L^q(\O)\), where we use Riesz representation (see, for example, \cite[Theorem~4.11]{brezis2010functional}) to identify the duals of \(L^p\)-spaces with \(L^q\):
\begin{equation}\label{eq:intbyparts}
  \int_{\O} \diver \ddot{\sigma}(x) u(x)\dx
  =
  -\int_{\Od}\int_{\Od} \ddot{\sigma}(x,x') \grad u(x,x')\dx\dx',
  \qquad \forall (\ddot{\sigma},u) \in D(\diver)\times D(\grad).
\end{equation}
We will refer to this operator as the nonlocal divergence operator.
As an adjoint operator, it is closed and linear.
We summarize some of its properties we are going to utilize below.
\begin{proposition}\label{prop:diver}
  The following statements hold.
  \begin{enumerate}
      \item \label{diver:formula} The domain of \(\diver\), \(\Qd = D(\diver)\), is dense in \(L^q(\Od \times \Od)\). In fact, for each \(\qnl \in C^{0,1}(\Rn\times\Rn)\) we have the formula
      \[
        \diver\qnl(x)=\int_{\Od}[\qnl(x',x)-\qnl(x,x')]\Wd(x-x')\dx'.
      \]
      \item \(\Qd\) equipped with graph norm \(\|\qnl\|^q_{\Qd} = \| \qnl \|^q_{L^q(\Od \times \Od)} + \| \diver \qnl \|^q_{L^q(\Omega)}\) is a separable, reflexive Banach space.
      \item \label{diver:surj} \(\diver:\Qd \to L^q(\Omega)\) is a bounded, surjective linear operator.
  \end{enumerate}
\end{proposition}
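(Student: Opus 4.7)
The plan is to treat the three claims in order, reducing the first and third to Proposition~\ref{prop:grad0} combined with a standard duality argument.

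For part (1), I take $\qnl \in C^{0,1}(\Rn \times \Rn)$ and verify both that $\qnl \in \Qd$ and that the advertised formula holds. The key observation is that by Lipschitzness $|\qnl(x',x) - \qnl(x,x')| \leq \sqrt{2}L|x-x'|$, so the candidate right-hand side is pointwise dominated by $\sqrt{2}L \int_{B(0,\delta)} |y|\Wd(y)\dx$, which is finite by H\"older's inequality and the normalization~\eqref{eq:normalization}; in particular, the candidate lies in $L^\infty(\O) \subset L^q(\O)$ since $\O$ is bounded. To match this candidate with $\diver\qnl$ in the sense of~\eqref{eq:intbyparts}, I fix $u \in \Udz$, expand $\grad u(x,x') = [u(x)-u(x')]\Wd(x-x')$, apply Fubini, and swap $x \leftrightarrow x'$ in the summand involving $u(x')$; radiality of $\Wd$ gives $\Wd(x'-x) = \Wd(x-x')$, and the vanishing of $u$ outside $\O$ reduces the outer integration to $\O$. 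Density of $\Qd$ in $L^q(\Od\times\Od)$ then follows at once, since $C^\infty(\closure(\Od\times\Od))$ admits Lipschitz extensions to $\Rn\times\Rn$, is therefore contained in $\Qd$, and is dense in $L^q(\Od\times\Od)$.

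For part (2), I would employ the standard isometry argument that mirrors the proof of Proposition~\ref{prop:grad0}\ref{grad0:banach}: the map $T\qnl = (\qnl, \diver\qnl)$ embeds $\Qd$ linearly and isometrically into the separable, reflexive product space $L^q(\Od\times\Od) \times L^q(\O)$. Since $\diver$ is closed as the negative adjoint of the densely defined operator $\grad$, the image $R(T) = \graph \diver$ is a closed subspace and thereby inherits separability and reflexivity, which in turn pass back to $\Qd$ via $T^{-1}$.

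Part (3) is where the bulk of the work is concentrated and where I expect the main obstacle to lie. Boundedness of $\diver: \Qd \to L^q(\O)$ is built into the graph norm, so only surjectivity needs argument, and I plan to obtain it from the closed range theorem (e.g., Brezis, Theorem~2.19) applied to the closed densely-defined operator $\grad$ between the reflexive Banach spaces $L^p(\O)$ and $L^p(\Od \times \Od)$. By Proposition~\ref{prop:grad0}.4 the range $R(\grad)$ is closed, so the theorem delivers $R(\grad^*) = N(\grad)^{\perp}$ as subspaces of $(L^p(\O))^* \cong L^q(\O)$; since $\grad^* = -\diver$, this reads $R(\diver) = N(\grad)^{\perp}$. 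Finally, the uniform nonlocal Poincar\'e inequality in Proposition~\ref{prop:grad0}\ref{grad0:poincare} forces $N(\grad) = \{0\}$, whence $N(\grad)^{\perp} = L^q(\O)$ and surjectivity is established.
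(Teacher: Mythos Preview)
Your parts (2) and (3) match the paper's argument essentially verbatim: the graph-isometry trick for (2) and the closed-range theorem (Brezis, Theorem~2.21 in the paper's citation, 2.19 in yours---the same circle of results) together with injectivity of $\grad$ for (3) are exactly what the paper invokes.

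Part (1), however, has an integrability gap. After you expand $\grad u(x,x') = [u(x)-u(x')]\Wd(x-x')$ and split into the two summands $\qnl(x,x')u(x)\Wd(x-x')$ and $\qnl(x,x')u(x')\Wd(x-x')$, you need each summand separately in $L^1(\Od\times\Od)$ before you may apply Fubini and swap variables in only one of them. But the kernel $\Wd$ need not lie in $L^1(B(0,\delta))$: for the model kernel $\Wd(x)=c|x|^{\alpha}$ with $\alpha\in(-1-n/p,-1]$, integrability of $\Wd$ requires $\alpha>-n$, which can fail (take $n=2$, $p=3/2$, $\alpha=-2.1$). Since $u$ and $\qnl$ are merely bounded near the diagonal, the individual summands then diverge. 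The paper avoids this by truncating to $O_\epsilon=\{(x,x')\in\Od\times\Od:|x-x'|>\epsilon\}$, where the kernel is bounded and Fubini is unproblematic, performing the swap there, and then letting $\epsilon\searrow 0$; the Lipschitz estimate you already derived controls the limit via dominated convergence. An alternative repair is to first swap $x\leftrightarrow x'$ in the \emph{full} integrand (legitimate, since $\qnl\,\grad u\in L^1$) to replace $\qnl(x,x')$ by its antisymmetric part $\tfrac{1}{2}[\qnl(x,x')-\qnl(x',x)]$, and only then split in $u$: the Lipschitz bound $|\qnl(x,x')-\qnl(x',x)|\le\sqrt{2}L|x-x'|$ now makes each resulting piece absolutely integrable. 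Either fix is short, but as written your argument does not go through for singular kernels.
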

\begin{proof}
   \begin{enumerate}
    \item
    The claim follows from the definition of \(\diver\) using the integration by parts formula.
    Namely, for all \(\qnl\in C^{0,1}(\Od\times\Od)\),
    and for all \(v\in\Udz\) we have, owing to Fubini's theorem:
    \begin{equation*}
      \begin{aligned}
      -\int_{\Od}\int_{\Od} &\qnl(x,x')\grad v(x,x') \dx'\dx =
      -\int_{\Od}\int_{\Od} \qnl(x,x')[v(x)-v(x')]\Wd(x-x')\dx'\dx
      \\&=
      -\lim_{\epsilon\searrow 0} \iint_{O_\epsilon} \frac{\qnl(x,x')}{|x-x'|}[v(x)-v(x')]|x-x'|\Wd(x-x')\dx'\dx
      \\&=
      \lim_{\epsilon\searrow 0} \iint_{O_\epsilon} \bigg[\frac{\qnl(x,x')}{|x-x'|}v(x')|x-x'|\Wd(x-x')
      -
      \frac{\qnl(x,x')}{|x-x'|}v(x)|x-x'|\Wd(x-x')\bigg]\dx'\dx
      \\&=
       \lim_{\epsilon\searrow 0} \iint_{O_\epsilon}v(x)[\qnl(x',x)-\qnl(x,x')]\Wd(x-x')\dx'\dx
      \\&=\int_{\O} v(x)\underbrace{\bigg\{\int_{\Od}[\qnl(x',x)-\qnl(x,x')]\Wd(x-x')\dx'\bigg\}}_{=\diver \qnl(x)}\dx,
      \end{aligned}
    \end{equation*}
    where \(O_\epsilon = \{\, (x,x')\in\Od\times\Od \colon |x-x'|>\epsilon \,\}\).
    The last equality holds owing to the continuity of Lebesgue's integral and owing to the following estimate, which holds for each \(x\in\O\):
    \begin{equation*}
      \begin{aligned}
        \bigg|\int_{\Od}[&\qnl(x',x)-\qnl(x,x')]\Wd(x-x')\dx'\bigg| \leq
        \int_{\Od}\frac{|\qnl(x',x)-\qnl(x,x')|}{|x-x'|}|x-x'|\Wd(x-x')\dx'\\
        &\leq 
        \bigg\{\int_{\Od} \bigg[\underbrace{\frac{|\qnl(x',x)-\qnl(x,x')|}{|x-x'|}}_{\leq \sqrt{2}\|\qnl\|_{C^{0,1}(\Od\times\Od)}}\bigg]^{q}\dx'\bigg\}^{1/q}
        \bigg\{\int_{\Od} [|x-x'|\Wd(x-x')]^p \dx'\bigg\}^{1/p}
        \\&\leq \sqrt{2}\|\qnl\|_{C^{0,1}(\Od\times\Od)} |\Od|^{1/q} K_{p,n}^{-1/p},
      \end{aligned}
    \end{equation*}
    where we have utilized H{\"o}lder inequality and~\eqref{eq:normalization}.
    Consequently, \(\diver\qnl \in L^{\infty}(\O)\subset L^q(\O)\) in this case.
    \item We note that \(\diver\) is a closed operator as the adjoint operator. One can then follow the proof of Proposition~\ref{prop:grad0}, point~\ref{grad0:banach}.
    \item The surjectivity of \(\diver\) follows from Proposition~\ref{prop:grad0}, points~\ref{grad0:dense}, \ref{grad0:clgr}, and~\ref{grad0:poincare} and~\cite[Theorem~2.21]{brezis2010functional}.
    The fact that it is a bounded operator follows directly from the definition of the graph norm on \(\Qd\).\qedhere
   \end{enumerate}
\end{proof}

Proposition~\ref{prop:diver}, point~\ref{diver:surj} implies that the affine subspace
\(\Qd(f) = \{\,\qnl \in \Qd \,|\, \diver \qnl = f\,\}\) is a nonempty and closed subset of \(\Qd\) for each \(f\in L^q(\Omega)\).
This allows us to formulate the following nonlocal version of~\eqref{eq:local_dual}, that is, the dual variational principle for \(p\)-Laplacian:
\begin{align} \label{eq:nonlocal_dual}
  \hat{\imath}_{\delta}(\knl) &= \inf_{\qnl \in \Qd(f)} \hat{I}_{\delta}(\knl;\qnl),
  &\qquad&\text{where}\\
  \hat{I}_{\delta}(\knl;\qnl) &= \frac{1}{q}\int_{\Od}\int_{\Od} \knl^{1-q}(x,x')|\qnl(x,x')|^q\dx\dx'.
\end{align}

\begin{theorem}[Existence of solutions and optimality conditions]
  \label{thm:nonlocal_state_existence}
  The problem~\eqref{eq:nonlocal_dual} admits a unique optimal solution \(\qnl^*\in \Qd(f)\) for each small \(\delta>0\), \(f \in L^{q}(\O)\), and \(\knl \in \kadmd\).
  Furthermore, there is a unique Lagrange multiplier \(u^*\in L^p(\O)\) such that the pair \((\qnl^*,u^*)\) satisfies the first order necessary and sufficient optimality conditions:
  \begin{equation}\label{eq:kkt}\begin{aligned}
    \int_{\Od}\int_{\Od} \knl^{1-q}(x,x')|\qnl^*(x,x')|^{q-2}\qnl^*(x,x')\pnl(x,x')\dx\dx'
    - \int_{\O}\diver \pnl(x) u^*(x) &= 0, &\qquad &\forall \pnl \in \Qd,\\
    \diver \qnl^* &= f.
  \end{aligned}\end{equation}
\end{theorem}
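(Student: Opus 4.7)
My approach is the classical direct method combined with the Lagrange multiplier rule for convex programming under a linear surjective equality constraint.

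\emph{Step 1: Existence.} Because \(\knl \in \kadmd\) satisfies \(\underline{\kappa}\le\knl\le\overline{\kappa}\) and \(1-q<0\), we have the two-sided bound \(\overline{\kappa}^{1-q}\le \knl^{1-q}\le\underline{\kappa}^{1-q}\) a.e.\ on \(\Od\times\Od\). Hence \(\hat{I}_{\delta}(\knl;\cdot)\) is continuous on \(L^q(\Od\times\Od)\) and coercive in the sense that
\[
\hat{I}_{\delta}(\knl;\qnl)\ \ge\ \frac{\overline{\kappa}^{1-q}}{q}\,\|\qnl\|_{L^q(\Od\times\Od)}^q.
\]
By Proposition~\ref{prop:diver}, point~\ref{diver:surj}, \(\Qd(f)\) is a nonempty closed affine subspace of the reflexive separable Banach space \(\Qd\); in particular it is weakly closed. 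A minimizing sequence \(\{\qnl_n\}\subset\Qd(f)\) is therefore bounded in \(L^q(\Od\times\Od)\); together with \(\diver\qnl_n=f\) this yields boundedness in the graph norm, so after passing to a subsequence \(\qnl_n\wto \qnl^*\in\Qd(f)\). Weak lower semicontinuity of the continuous convex functional \(\hat{I}_{\delta}(\knl;\cdot)\) delivers optimality of \(\qnl^*\).

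\emph{Step 2: Uniqueness.} Since \(q>1\) the map \(t\mapsto |t|^q\) is strictly convex on \(\R\); combined with the positivity of \(\knl^{1-q}\), this makes \(\hat{I}_{\delta}(\knl;\cdot)\) strictly convex on \(L^q(\Od\times\Od)\). Uniqueness of \(\qnl^*\) in \(\Qd(f)\) follows.

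\emph{Step 3: KKT conditions.} The functional \(\hat{I}_{\delta}(\knl;\cdot)\) is Fr{\'e}chet differentiable on \(L^q(\Od\times\Od)\), with derivative at \(\qnl^*\) in direction \(\pnl\) equal to
\[
\int_{\Od}\int_{\Od}\knl^{1-q}(x,x')\,|\qnl^*(x,x')|^{q-2}\qnl^*(x,x')\,\pnl(x,x')\dx\dx',
\]
where the standard convention \(|0|^{q-2}\cdot 0=0\) is used. Since the constraint map \(\diver:\Qd\to L^q(\O)\) is linear, bounded, and surjective (Proposition~\ref{prop:diver}), the Lagrange multiplier rule for convex problems with affine constraints (e.g.\ via the closed range theorem applied to \(\diver\)) guarantees the existence of \(u^*\in L^p(\O)\) such that the stationarity identity in~\eqref{eq:kkt} holds for every \(\pnl\in\Qd\); the feasibility identity \(\diver\qnl^*=f\) is immediate. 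Conversely, convexity of \(\hat{I}_{\delta}(\knl;\cdot)\) and linearity of the constraint make these conditions sufficient for optimality.

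\emph{Step 4: Uniqueness of the multiplier.} If \(u_1^*,u_2^*\in L^p(\O)\) both satisfy~\eqref{eq:kkt}, subtraction yields \(\int_{\O}(u_1^*-u_2^*)\diver\pnl\dx=0\) for every \(\pnl\in\Qd\). The surjectivity of \(\diver\) onto \(L^q(\O)\) then forces \(u_1^*=u_2^*\) by duality between \(L^p\) and \(L^q\).

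The only mildly delicate point is Step~3: strictly speaking one must verify the abstract hypothesis underlying the multiplier rule, namely that the range of \(\diver\) is closed and surjective so that its adjoint has closed range and provides a representation of continuous linear functionals vanishing on \(\ker\diver\). Both properties are supplied by Proposition~\ref{prop:diver}, so no genuine difficulty arises.
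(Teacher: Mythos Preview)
Your proposal is correct and follows essentially the same route as the paper: existence via the direct method/Weierstrass argument on the reflexive space \(\Qd\), uniqueness from strict convexity of \(|\cdot|^q\), existence of the multiplier from surjectivity of \(\diver\) (the paper phrases this as Robinson's constraint qualification and cites~\cite[Theorem~3.6]{bonnans2013perturbation}, which here reduces to exactly the surjectivity you invoke), and uniqueness of \(u^*\) again from surjectivity of \(\diver\). The only cosmetic difference is that the paper invokes the abstract Weierstrass theorem and the Bonnans--Shapiro framework as black boxes, whereas you unpack the direct method and the multiplier rule slightly more explicitly.
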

\begin{proof}
  The infimum in~\eqref{eq:nonlocal_dual} is attained: indeed we minimize a convex, continuous, and coercive function over the convex, closed, and nonempty subset of a reflexive Banach space \(\Qd\), and consequently the generalized Weierstrass theorem is applicable~\cite[Theorem 7.3.7]{kurdila2006convex}.
  Thereby the existence of \(\qnl^*\) is established.

  The uniqueness of \(\qnl^*\) is implied by the strict convexity of the integrand, i.e.\ the map \(\R\ni\sigma \mapsto \knl^{1-q}(x,x')|\sigma|^q\), \(1<q<\infty\).

  We now note that Robinson's constraint qualification (cf.~\cite[Equation (3.12)]{bonnans2013perturbation}) holds:
  \begin{equation}\label{eq:rob}
    0 \in \text{int}[R(\diver)-f] = L^q(\O),
  \end{equation}
  owing to the surjectivity of \(\diver\), see Proposition~\ref{prop:diver}, point~\ref{diver:surj}.
  Therefore, we can apply \cite[Theorem~3.6]{bonnans2013perturbation} to assert the necessity of the optimality conditions~\eqref{eq:kkt} and the existence of Lagrange multipliers.
  The sufficiency of~\eqref{eq:kkt} for optimality under convexity assumptions is standard, see for example~\cite[Proposition~3.3]{bonnans2013perturbation}.

  The uniqueness of \(u^*\) may be verified directly from~\eqref{eq:kkt} using the surjectivity of \(\diver\), see Proposition~\ref{prop:diver}, point~\ref{diver:surj}.
  Indeed, the difference \(v \in L^p(\O)\) between any two Lagrange multipliers would have to satisfy the equality
  \[\int_{\O} \diver\pnl(x)v(x)\dx = 0, \qquad \forall \pnl \in \Qd,\]
  which is easily obtainable by subtracting two versions of optimality conditions~\eqref{eq:kkt} corresponding to two potential Lagrange multipliers from each other.\footnote{%
  A less direct way of arriving at the same conclusion is by observing that the strict Robinson's constraint qualification, see~\cite[Definition~4.46]{bonnans2013perturbation} reduces to~\eqref{eq:rob} in the present situation and then by appealing to~\cite[Proposition~4.47~(i)]{bonnans2013perturbation}.}
\end{proof}

Symmetry will play a critical role in some of the forthcoming results.
In order to describe it we introduce the following notation for two subspaces of
symmetric and anti-symmetric functions in \(L^q(\Od\times\Od)\):
\[\begin{aligned}
  L^q_s(\Od\times\Od) &= \{\, \qnl \in L^q(\Od\times\Od)
\mid \qnl(x,x')-\qnl(x',x)=0, \text{a.e.\ in \(\Od\times\Od\)}\,\}\quad\text{and}\\
L^q_a(\Od\times\Od) &= \{\, \qnl \in L^q(\Od\times\Od)
\mid \qnl(x,x')+\qnl(x',x)=0, \text{a.e.\ in \(\Od\times\Od\)}\,\}.\\
\end{aligned}\]
We will also write \(\Qd^a = \Qd\cap L^q_a(\Od\times\Od)\),
and \(\Qd^a(f) = \Qd(f)\cap L^q_a(\Od\times\Od)\).
To characterize the symmetric and antisymmetric functions we will use the following auxiliary result.
\begin{lemma}[Symmetric and antisymmetric functions]\label{lem:sym}
  The following statements hold.
  \begin{enumerate}
    \item
  Subspaces \(L^q_s(\Od\times\Od)\) and \(L^q_a(\Od\times\Od)\) are complementary in \(L^q(\Od\times\Od)\).
  \item The set \(L^q_a(\Od\times\Od) \cap C^\infty_c(\Rn\times\Rn)\) is dense in \(L^q_a(\Od\times\Od)\). A similar statement holds for \(L^q_s(\Od\times\Od)\).
  \item \(\forall \qnl \in L^p_a(\Od\times\Od)\) and \(\pnl \in L^q_s(\Od\times\Od)\) we have the equality \(\int_{\Od}\int_{\Od} \qnl(x,x')\pnl(x,x')\dx\dx' = 0\).
  \item \label{lem:sym:orth} Suppose that \(\qnl \in L^p(\Od\times\Od)\) and \(\int_{\Od}\int_{\Od} \qnl(x,x')\pnl(x,x')\dx\dx'=0\), \(\forall \pnl \in L^q_s(\Od\times\Od) \cap C^\infty_c(\Rn\times\Rn)\). Then \(\qnl \in L^p_a(\Od\times\Od)\).
  The corresponding statement obtained by replacing \(s\) and \(a\) also holds.
  \end{enumerate}
\end{lemma}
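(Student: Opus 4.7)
The plan is to treat all four claims uniformly via the linear antisymmetrization and symmetrization projections
\[ P_a \qnl(x,x') = \tfrac{1}{2}[\qnl(x,x')-\qnl(x',x)], \qquad P_s\qnl(x,x') = \tfrac{1}{2}[\qnl(x,x')+\qnl(x',x)], \]
which (by the change of variable $x \leftrightarrow x'$ together with Fubini) are bounded idempotents on $L^q(\Od\times\Od)$ of norm at most $1$, with ranges $L^q_a(\Od\times\Od)$ and $L^q_s(\Od\times\Od)$ respectively. For the first claim, every $\qnl \in L^q(\Od\times\Od)$ admits the decomposition $\qnl = P_s\qnl + P_a\qnl$; the sum is direct because any function which is simultaneously symmetric and antisymmetric satisfies $\qnl(x,x')=-\qnl(x,x')$ a.e., hence vanishes.

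For the second claim, I would start from the density of $C^\infty_c(\Rn\times\Rn)$ in $L^q(\Od\times\Od)$, obtained by extending an $L^q(\Od\times\Od)$ function by zero to $\Rn\times\Rn$ and convolving with a standard mollifier. If $\qnl\in L^q_a(\Od\times\Od)$ and $\phi_k \in C^\infty_c(\Rn\times\Rn)$ satisfies $\phi_k\to \qnl$ in $L^q$, then $P_a\phi_k \to P_a\qnl = \qnl$ by continuity of $P_a$, and $P_a\phi_k$ is still in $C^\infty_c(\Rn\times\Rn)$ since $(x,x')\mapsto \phi_k(x',x)$ shares that regularity and support structure. The symmetric case is identical.

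The third claim is an immediate consequence of the change of variables $x\leftrightarrow x'$: for $\qnl\in L^p_a$ and $\pnl\in L^q_s$,
\[
\int_{\Od}\int_{\Od}\qnl(x,x')\pnl(x,x')\dx\dx' = \int_{\Od}\int_{\Od}\qnl(x',x)\pnl(x',x)\dx\dx' = -\int_{\Od}\int_{\Od}\qnl(x,x')\pnl(x,x')\dx\dx',
\]
so the integral equals its own negative.

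For the fourth claim, decompose $\qnl = P_s\qnl + P_a\qnl \in L^p_s\oplus L^p_a$. For any $\pnl\in L^q_s(\Od\times\Od)\cap C^\infty_c(\Rn\times\Rn)$, the assumption together with the third claim applied to $P_a\qnl$ yields
\[
0 = \int_{\Od}\int_{\Od}\qnl\pnl\dx\dx' = \int_{\Od}\int_{\Od}(P_s\qnl)\pnl\dx\dx'.
\]
By the density established in the second claim, this extends to all $\pnl \in L^q_s(\Od\times\Od)$. Choosing the admissible test function $\pnl = |P_s\qnl|^{p-2}P_s\qnl$, which belongs to $L^q_s(\Od\times\Od)$ because $(p-1)q=p$ and pointwise functions of a symmetric argument remain symmetric, gives $\|P_s\qnl\|_{L^p}^p = 0$, hence $\qnl = P_a\qnl \in L^p_a(\Od\times\Od)$. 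The reversed statement (testing against antisymmetric functions forces symmetry) follows by interchanging the roles of $s$ and $a$ throughout. I expect the main technical point to be this last density-plus-duality upgrade in claim four; once the bounded projection viewpoint is in place, however, it reduces to a one-line verification via a canonical test function.
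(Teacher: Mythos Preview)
Your proof is correct and follows essentially the same architecture as the paper's: decomposition into symmetric and antisymmetric parts, density of smooth functions preserving the symmetry class, the change-of-variables orthogonality, and then a duality argument for part~4. The only noteworthy technical differences are in parts~2 and~4. For density, the paper builds a \emph{symmetric} mollifier $\psi_\epsilon(x,x')=\epsilon^{-2n}\phi(\epsilon^{-1}x)\phi(\epsilon^{-1}x')$ so that convolution preserves antisymmetry directly, whereas you mollify arbitrarily and then apply the bounded projection $P_a$; both work, and your version is arguably cleaner once $P_a$ is set up. For part~4, the paper tests against a mollified $\operatorname{sign}(\qnl^s)$ to obtain $\int|\qnl^s|=0$, while you first upgrade the orthogonality to all of $L^q_s$ by density and then take $\pnl=|P_s\qnl|^{p-2}P_s\qnl$ to obtain $\|P_s\qnl\|_{L^p}^p=0$. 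These are interchangeable choices of canonical test function and neither buys anything the other does not.
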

\begin{proof}
  \begin{enumerate}
    \item Both \(L^q_s(\Od\times\Od)\) and \(L^q_a(\Od\times\Od)\) are closed, since convergence in \(L^q(\Od\times\Od)\) implies the pointwise convergence along a subsequence.
    Furthermore, from their definition we have \(L^q_s(\Od\times\Od)\cap L^q_a(\Od\times\Od) = \{0\}\).
    Finally, \(L^q_s(\Od\times\Od) + L^q_a(\Od\times\Od) = L^q(\Od\times\Od)\), because 
    each \(\qnl \in L^q(\Od\times\Od)\) may be represented as a sum of a symmetric and an antisymmetric functions: 
    \[\qnl(x,x') = \underbrace{\tfrac{1}{2}[\qnl(x,x')+\qnl(x',x)]}_{\in L^q_s(\Od\times\Od)} + \underbrace{\tfrac{1}{2}[\qnl(x,x')-\qnl(x',x)]}_{\in L^q_a(\Od\times\Od)}.\]
    \item This can be shown using ``symmetric mollification.''
    Indeed, let us take an arbitrary function \(\qnl \in L^q_a(\Od\times\Od) \), and an arbitrary mollifier \(\phi\in C^\infty_c(\Rn)\), \(\phi\geq 0\), \(\int_{\Rn}\phi(x)\dx=1\) and construct a family of symmetric mollifiers \(\psi_\epsilon(x,x') = \epsilon^{-2n}\phi(\epsilon^{-1}x)\phi(\epsilon^{-1}x')\).
    Then \(L^q_a(\Od\times\Od) \cap C^\infty_c(\Rn\times\Rn) \ni \psi_\epsilon \star \qnl
    \to \qnl\) in \(L^q(\Od\times\Od)\) as \(\epsilon \to 0\), see~\cite[Theorem~4.22]{brezis2010functional}.
    \item We perform the direct computation:
    \[\begin{aligned}\int_{\Od}\int_{\Od} \qnl(x,x')\pnl(x,x')\dx\dx'
    &= -\int_{\Od}\int_{\Od} \qnl(x',x)\pnl(x',x)\dx\dx'
    = -\int_{\Od}\int_{\Od} \qnl(x',x)\pnl(x',x)\dx'\dx
    \\&= -\int_{\Od}\int_{\Od} \qnl(x,x')\pnl(x,x')\dx\dx',\end{aligned}\]
    where the first equality stems from  the definition of the symmetric and antisymmetric functions, in the second we change the order of the integration (Fubini theorem, \cite[Theorem~4.5]{brezis2010functional}), and in the third we simply rename the variables \(x\leftrightarrow x'\).
    \item Let \(\qnl = \qnl^a + \qnl^s\), \(\qnl^a \in L^p_a(\Od\times\Od)\) and
    \(\qnl^s \in L^p_s(\Od\times\Od)\).
    We put \(\pnl^s = \text{sign}(\qnl^s) \in L^q_s(\Od\times\Od)\),
    and let \(\pnl^s_\epsilon \in L^q_s(\Od\times\Od) \cap C^\infty_c(\Rn\times\Rn)\) be the ``symmetrically mollified'' function.
    Then
    \[
      0 = \int_{\Od}\int_{\Od} \qnl(x,x')\pnl^s_\epsilon(x,x')\dx\dx'
      = \int_{\Od}\int_{\Od} \qnl^s(x,x')\pnl^s_\epsilon(x,x')\dx\dx'
      \xrightarrow[\epsilon\to 0]{ } \int_{\Od}\int_{\Od} |\qnl^s(x,x')|\dx\dx',
    \]
    where we have utilized the previously established points.  Thus \(\qnl^s\equiv 0\) and \(\qnl = \qnl^a \in L^p_a(\Od\times\Od)\).\qedhere
  \end{enumerate}
\end{proof}

\begin{corollary}[Antisymmetry of optimal fluxes]\label{cor:antisym}
  Any optimal solution \(\qnl^* \in \Qd(f)\) to~\eqref{eq:nonlocal_dual} must be \emph{antisymmetric}, that is, \(\qnl^* \in L^q_a(\Od\times\Od)\).
\end{corollary}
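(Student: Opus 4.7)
The plan is to show that the symmetric part of any minimizer $\qnl^*$ must vanish by exhibiting its antisymmetric part as a competitor in $\Qd(f)$ with no larger objective value and then invoking the uniqueness established in Theorem~\ref{thm:nonlocal_state_existence}. Write $\qnl^* = (\qnl^*)^s + (\qnl^*)^a$ as in Lemma~\ref{lem:sym}, point~1.

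The key preliminary observation is that the nonlocal gradient $\grad v(x,x') = [v(x)-v(x')]\Wd(x-x')$ is antisymmetric in $(x,x')$ because $\Wd$ is radial. Therefore, Lemma~\ref{lem:sym}, point~3, yields for every symmetric $\pnl \in L^q_s(\Od\times\Od)$ and every $v \in \Udz$
\[
-\int_{\Od}\int_{\Od} \pnl(x,x')\grad v(x,x')\dx\dx' = 0.
\]
This identically vanishing functional is trivially continuous in the $L^p(\O)$-norm of $v$, so the definition of $\diver$ as the negative adjoint of $\grad$ gives $\pnl \in \Qd$ with $\diver \pnl = 0$. Applied to $\pnl = (\qnl^*)^s$, this places $(\qnl^*)^s$ in $\Qd$ with $\diver (\qnl^*)^s = 0$, and hence $(\qnl^*)^a = \qnl^* - (\qnl^*)^s \in \Qd$ with $\diver (\qnl^*)^a = f$; in other words, $(\qnl^*)^a \in \Qd(f)$ is itself admissible.

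The next step is to compare the objective values. Setting $a = (\qnl^*)^s(x,x')$ and $b = (\qnl^*)^a(x,x')$, one has $\qnl^*(x,x') = a+b$ and $\qnl^*(x',x) = a-b$. Convexity of $\R \ni t \mapsto |t|^q$ together with the vanishing of $\frac{d}{da}\bigl[|a+b|^q + |a-b|^q\bigr]$ at $a=0$ yields the pointwise inequality $|a+b|^q + |a-b|^q \geq 2|b|^q$. Combining this with the symmetry of $\knl^{1-q}$ from~\eqref{eq:kappa_symm} and Fubini,
\[
\hat{I}_{\delta}(\knl;\qnl^*) = \frac{1}{2q}\int_{\Od}\int_{\Od}\knl^{1-q}(x,x')\bigl[|\qnl^*(x,x')|^q + |\qnl^*(x',x)|^q\bigr]\dx\dx' \geq \hat{I}_{\delta}(\knl;(\qnl^*)^a).
\]

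Since $(\qnl^*)^a \in \Qd(f)$ and $\qnl^*$ is by Theorem~\ref{thm:nonlocal_state_existence} the unique minimizer of $\hat{I}_{\delta}(\knl;\cdot)$ over $\Qd(f)$, equality must in fact hold and $(\qnl^*)^a = \qnl^*$, forcing $(\qnl^*)^s \equiv 0$. The one conceptual subtlety, which I expect to be the main obstacle, is verifying that the symmetric part of $\qnl^*$ lies in $D(\diver)=\Qd$ so that the decomposition is legitimate inside $\Qd(f)$; this is precisely what the antisymmetry of $\grad$ together with the adjoint definition of $\diver$ provides.
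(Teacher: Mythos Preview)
Your proof is correct, and it takes a genuinely different route from the paper's. The paper argues via the first-order optimality conditions~\eqref{eq:kkt}: testing with symmetric Lipschitz fluxes $\pnl$, which lie in $\ker\diver$ by the explicit formula in Proposition~\ref{prop:diver}, kills the Lagrange-multiplier term and shows that $\knl^{1-q}|\qnl^*|^{q-2}\qnl^*$ is orthogonal to all smooth symmetric functions, hence antisymmetric by Lemma~\ref{lem:sym}, point~\ref{lem:sym:orth}; the symmetry of $\knl$ then forces $\qnl^*$ itself to be antisymmetric. Your argument instead builds a competitor: you observe (directly from the antisymmetry of $\grad$ and the adjoint definition of $\diver$) that every symmetric $L^q$ function lies in $\ker\diver$, so the antisymmetric part $(\qnl^*)^a$ is feasible in $\Qd(f)$; a pointwise convexity inequality combined with the symmetry of $\knl^{1-q}$ shows it has no larger energy, and uniqueness from Theorem~\ref{thm:nonlocal_state_existence} finishes. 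Your approach is more self-contained---it bypasses the KKT/Lagrange-multiplier machinery entirely and even gives the stronger intermediate fact that \emph{all} of $L^q_s(\Od\times\Od)$ lies in $\ker\diver$, not just the Lipschitz part---while the paper's approach illustrates how the optimality conditions interact with the symmetry structure.
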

\begin{proof}
  Consider an arbitrary \(\pnl \in C^{0,1}(\Od\times\Od) \cap L^q_s(\Od\times\Od)\).
  Then \(\pnl \in \ker \diver\) owing to Proposition~\ref{prop:diver}, point~\ref{diver:formula}.
  Utilizing it in~\eqref{eq:kkt} we can see that the function \(\knl^{1-q}(x,x')|\qnl^*(x,x')|^{q-2}\qnl^*(x,x') \in L^p_a(\Od\times\Od)\), owing to Lemma~\ref{lem:sym}, point~\ref{lem:sym:orth}. Since each \(\knl(x,x') \in \kadmd\) is symmetric, we arrive at the inescapable conclusion that \(\qnl^*\) has to be antisymmetric.
\end{proof}

We will now show, that \(\diver\) possesses a bounded right inverse operator, whose norm is uniformly bounded for small \(\delta>0\).
This will allow us to make uniform a priori estimates of optimal solutions to~\eqref{eq:nonlocal_dual}.
\begin{proposition}\label{prop:quotientestimate}
    Let the constants \(\overline{\delta}>0\) and \(C>0\) be as in Proposition~\ref{prop:grad0}, point~\ref{grad0:poincare}.
    Then for each \(f \in L^q(\Omega)\) and \(\delta \in (0,\overline{\delta})\) there exists a function  \(\qnldf \in \Qd(f)\) with \(\|\qnldf\|_{L^q(\Od\times\Od)} \leq C \|f\|_{L^q(\Omega)}\).
    Consequently, \(\|\qnldf\|^q_{\Qd} \leq (C^q + 1)\|f\|^q_{L^q(\Omega)}\).
\end{proposition}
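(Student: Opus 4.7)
The natural strategy is a Hahn--Banach plus Riesz representation argument, which transforms the uniform nonlocal Poincar\'e inequality (Proposition~\ref{prop:grad0}, point~\ref{grad0:poincare}) into a uniform a priori estimate for a preimage of \(f\) under \(\diver\). The point is that ``having a right inverse of norm \(C\)'' is essentially the dual statement to ``\(\grad\) is injective with norm of the inverse at most \(C\) on its range'', so the construction should proceed by defining the right inverse first on \(R(\grad)\) and then extending.

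Concretely, the plan is as follows. First, I would define a linear functional \(L: R(\grad) \subseteq L^p(\Od\times\Od) \to \R\) by setting
\[
  L(\grad u) = -\int_{\O} f(x) u(x)\dx, \qquad u \in \Udz.
\]
This is well-defined because \(\grad\) is injective for \(\delta \in (0,\overline{\delta})\), so \(u\) is uniquely determined by \(\grad u\). Using H\"older's inequality and the uniform Poincar\'e inequality, one immediately gets
\[
  |L(\grad u)| \leq \|f\|_{L^q(\O)} \|u\|_{L^p(\O)} \leq C \|f\|_{L^q(\O)} \|\grad u\|_{L^p(\Od\times\Od)},
\]
so \(L\) is a bounded linear functional on the subspace \(R(\grad)\) with \(\|L\| \leq C\|f\|_{L^q(\O)}\).

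Next, I would invoke the Hahn--Banach theorem to extend \(L\) to a bounded linear functional \(\tilde L\) on all of \(L^p(\Od\times\Od)\) preserving the norm, and then apply the Riesz representation theorem to obtain a unique \(\qnldf \in L^q(\Od\times\Od)\) with
\[
  \tilde L(v) = -\int_{\Od}\int_{\Od} \qnldf(x,x') v(x,x') \dx\dx', \qquad \forall v \in L^p(\Od\times\Od),
\]
and \(\|\qnldf\|_{L^q(\Od\times\Od)} = \|\tilde L\| \leq C\|f\|_{L^q(\O)}\). Restricting to \(v = \grad u\) for \(u \in \Udz\) and comparing with the definition of \(L\), I get
\[
  -\int_{\Od}\int_{\Od} \qnldf(x,x')\grad u(x,x')\dx\dx' = \int_{\O} f(x) u(x)\dx, \qquad \forall u \in \Udz.
\]
Since the right-hand side is a bounded functional of \(u\) representable by \(f \in L^q(\O)\), this identity means precisely that \(\qnldf \in D(\diver) = \Qd\) with \(\diver \qnldf = f\) (in the sense of~\eqref{eq:intbyparts}), i.e., \(\qnldf \in \Qd(f)\). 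The estimate on the graph norm then follows immediately:
\[
  \|\qnldf\|_{\Qd}^q = \|\qnldf\|_{L^q(\Od\times\Od)}^q + \|f\|_{L^q(\O)}^q \leq (C^q+1)\|f\|_{L^q(\O)}^q.
\]

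There is no genuine obstacle here; the only subtlety to be careful with is the bookkeeping of signs in the definition of \(L\) and in the Riesz representation, so that at the end one recovers \(\diver \qnldf = +f\) rather than \(-f\), and to notice that the uniformity in \(\delta\) of the resulting bound is inherited entirely from the uniform Poincar\'e constant \(C\) of Proposition~\ref{prop:grad0}, point~\ref{grad0:poincare}.
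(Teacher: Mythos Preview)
Your argument is correct and more direct than the paper's. You work on the \(L^p\)-side: define a bounded functional on \(R(\grad)\) via the Poincar\'e inequality, Hahn--Banach extend it to \(L^p(\Od\times\Od)\), and Riesz-represent the extension as an element of \(L^q(\Od\times\Od)\), which then automatically lies in \(\Qd(f)\). The paper instead works on the \(L^q\)-side: it first shows by the direct method that \(\inf_{\qnl\in\Qd(f)}\|\qnl\|_{L^q(\Od\times\Od)}\) is attained at some \(\qnldf\), then builds a norm-one functional on \(\ker\diver\oplus\text{span}(\qnldf)\subset L^q(\Od\times\Od)\), Hahn--Banach extends it, invokes the closed range theorem to identify the extension with some \(\grad u\), and only then applies Poincar\'e to bound \(\|\qnldf\|_{L^q}\). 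Both routes rest on the same ingredients (uniform Poincar\'e, Hahn--Banach, \(L^p\)/\(L^q\) duality), but yours bypasses both the preliminary minimization step and the closed range theorem; the paper's route, on the other hand, produces a \(\qnldf\) that is the actual \(L^q\)-norm minimizer in \(\Qd(f)\), which is irrelevant for the proposition as stated. One bookkeeping slip, of exactly the kind you warned against: with your sign choices \(L(\grad u)=-\int_\O fu\) and \(\tilde L(v)=-\int_{\Od}\!\int_{\Od}\qnldf\,v\), restricting to \(v=\grad u\) actually gives \(-\int_{\Od}\!\int_{\Od}\qnldf\,\grad u=-\int_\O fu\), hence \(\diver\qnldf=-f\); flipping one sign (e.g.\ setting \(L(\grad u)=+\int_\O fu\)) lands you on \(+f\).
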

\begin{proof}
  Let us consider the following infima:
  \begin{equation}\label{eq:2infima}
    i_{\delta,f} = \inf_{\qnl \in \Qd(f)} \|\qnl\|_{L^q(\Od\times\Od)}
    = \inf_{\qnl \in \Qd(f), \pnl \in \ker \diver} \|\qnl+\pnl\|_{L^q(\Od\times\Od)}.
  \end{equation}
  The infimum on the left is clearly attained: we minimize a convex, continuous, and coercive function over the convex, closed, and nonempty subset of a reflexive Banach space \(\Qd\), and consequently the generalized Weierstrass theorem is applicable~\cite[Theorem 7.3.7]{kurdila2006convex}.
  We let \(\qnldf \in \Qd(f)\) be such that \(\|\qnldf\|_{L^q(\Od\times\Od)}=i_{\delta,f}\).

  If \(i_{\delta,f} = 0\) then also \(\qnldf = 0\), and we do not need to proceed any further since the claimed inequality clearly holds.

  If \(i_{\delta,f} > 0\) then \(\qnldf \not \in \ker \diver\), since \(i_{\delta,f}\) is also the shortest \(L^q(\Od\times\Od)\) distance between \(\Qd(f)\) and \(\ker\diver\), see the second infimum in~\eqref{eq:2infima}.
  We now define a linear functional \(F:\ker\diver \oplus \text{span}(\qnldf) \to \R\) by:
  \begin{equation*}
    F(\pnl+\alpha \qnldf) = \alpha i_{\delta,f},
    \qquad \forall \pnl\in\ker\diver.
  \end{equation*}
  Note that for each \(\qnl\in\ker\diver\) and \(\alpha \in \R\) we have the inequality
  \[
    |F(\qnl+\alpha \qnldf)| = |\alpha| \inf_{\pnl \in \ker \diver} \|\pnl+\qnldf\|_{L^q(\Od\times\Od)} = \inf_{\pnl \in \ker \diver} \|\pnl+\alpha\qnldf\|_{L^q(\Od\times\Od)} \leq \|\qnl+\alpha \qnldf\|_{L^q(\Od\times\Od)},
  \]
  where the second equality is owing to the fact that \(\ker\) is a linear subspace.
  We now apply Hahn--Banach theorem (cf.~\cite[Corollary~1.2]{brezis2010functional}) to extend \(F\) to a functional on \(L^q(\Od\times\Od)\) with \(\|F\|_{[L^q(\Od\times\Od)]'} \leq 1\), where we use the same symbol \(F\) to denote the extension.
  Note that our definition implies that \(F(\qnl) = 0\), \(\forall \qnl \in \ker \diver\). Owing to the closed range theorem (cf.~\cite[Theorem~2.19]{brezis2010functional}), we have that \(F \in (\ker \diver)^\perp = R(\grad)\), and consequently there is \(u \in \Udz\) such that
  \[F(\qnl) = \int_{\Od}\int_{\Od} \grad u(x,x')\qnl(x,x')\dx\dx',\qquad \forall \qnl \in L^q(\Od\times\Od),\]
  where we use Riesz representation (cf.~\cite[Theorem~4.11]{brezis2010functional}).
  In particular,
  \[\begin{aligned}
    \|\qnldf\|_{L^q(\Od\times\Od)}&=
    i_{\delta,f} =
    F(\qnldf) = \int_{\Od}\int_{\Od} \grad u(x,x')\qnldf(x,x')\dx\dx'
    = -\int_{\O} u(x)\diver\qnldf(x)\dx
    \\ &= -\int_{\O} u(x)f(x)\dx
    \leq \|u\|_{L^p(\Omega)} \|f\|_{L^q(\Omega)}  
    \leq C  \|\grad u\|_{L^p(\Od \times \Od)} \|f\|_{L^q(\Omega)}
    \\&=C \|F\|_{[L^q(\Od\times\Od)]'} \|f\|_{L^q(\Omega)} \leq C\|f\|_{L^q(\Omega)},
  \end{aligned} 
  \]
  where we have utilized the definition of \(\qnldf\), \(F\), \(\diver\) through the integration by parts, H{\"o}lder's inequality, Riesz representation, and most importantly the Poincar{\'e}-type inequality for \(\grad\), see Proposition~\ref{prop:grad0}, point~\ref{grad0:poincare}.

  Finally, the proof is concluded by observing that
  \[
    \|\qnldf\|^q_{\Qd} = 
    \|\qnldf\|^q_{L^q(\Od\times\Od)} + \|\diver \qnldf\|^q_{L^q(\O)}
    \leq (C^q + 1)\|f\|^q_{L^q(\Omega)}.\qedhere
  \]
\end{proof}
\begin{corollary}[A priori estimates]\label{cor:apriori}
  Let the constants \(\overline{\delta}>0\) and \(C>0\) be as in Proposition~\ref{prop:quotientestimate}, and let \(\knl \in \kadmd\) be arbitrary.
  Then for each \(f\in L^q(\O)\) and each \(\delta \in (0,\overline{\delta})\) the unique optimal solution \(\qnl^*\in\Qd(f)\) to~\eqref{eq:nonlocal_dual} satisfies the a priori estimates
  \begin{equation}\label{eq:apriori}
    \begin{aligned}
     \|\qnl^*\|^q_{L^q(\Od\times\Od)} &\leq \left(\frac{\overline{\kappa}}{\underline{\kappa}}\right)^{q-1} C^q\| f \|^q_{L^q(\O)}, \qquad\text{and}\\
     \|\qnl^*\|^q_{\Qd} &\leq \bigg[\left(\frac{\overline{\kappa}}{\underline{\kappa}}\right)^{q-1}C^q + 1\bigg]\| f \|^q_{L^q(\O)}.
    \end{aligned}
  \end{equation}
\end{corollary}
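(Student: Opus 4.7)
The plan is to exploit the optimality of $\qnl^*$ by comparing $\hat{I}_\delta(\knl;\qnl^*)$ against $\hat{I}_\delta(\knl;\qnldf)$, where $\qnldf\in\Qd(f)$ is the specific divergence preimage constructed in Proposition~\ref{prop:quotientestimate}. Since $\qnldf$ is admissible in~\eqref{eq:nonlocal_dual}, optimality yields
\[
\int_{\Od}\int_{\Od} \knl^{1-q}(x,x')|\qnl^*(x,x')|^q\dx\dx'
\leq
\int_{\Od}\int_{\Od} \knl^{1-q}(x,x')|\qnldf(x,x')|^q\dx\dx'.
\]

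Next I would bound the weight $\knl^{1-q}$ using the pointwise bounds~\eqref{eq:kadm_nlloc}. Because $1-q<0$, the map $t\mapsto t^{1-q}$ is decreasing on $(0,\infty)$, so from $\underline{\kappa}\leq \knl(x,x')\leq \overline{\kappa}$ we obtain $\overline{\kappa}^{1-q}\leq \knl^{1-q}(x,x')\leq \underline{\kappa}^{1-q}$ almost everywhere. Substituting the lower bound on the left and the upper bound on the right of the inequality above, then rearranging, gives
\[
\|\qnl^*\|_{L^q(\Od\times\Od)}^q
\leq \frac{\underline{\kappa}^{1-q}}{\overline{\kappa}^{1-q}} \|\qnldf\|_{L^q(\Od\times\Od)}^q
= \left(\frac{\overline{\kappa}}{\underline{\kappa}}\right)^{q-1}\|\qnldf\|_{L^q(\Od\times\Od)}^q.
\]
Applying Proposition~\ref{prop:quotientestimate} to replace $\|\qnldf\|_{L^q(\Od\times\Od)}^q$ by $C^q\|f\|_{L^q(\O)}^q$ then yields the first stated estimate.

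For the graph-norm bound, I would simply use that $\qnl^*\in\Qd(f)$, so $\diver\qnl^*=f$ and $\|\diver\qnl^*\|_{L^q(\O)}^q=\|f\|_{L^q(\O)}^q$; adding this to the first estimate via the definition of the graph norm $\|\cdot\|_{\Qd}^q$ gives the second bound. No real obstacle is expected here: the only delicate point is keeping track of the sign of the exponent $1-q$ so that the correct endpoints $\underline{\kappa}$ and $\overline{\kappa}$ appear in the final ratio, which is why I would explicitly note the monotonicity of $t\mapsto t^{1-q}$ before inserting the bounds into the integral.
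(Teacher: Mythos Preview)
Your proposal is correct and follows essentially the same approach as the paper: compare the optimal value $\hat{I}_\delta(\knl;\qnl^*)$ against $\hat{I}_\delta(\knl;\qnldf)$ using the feasibility of $\qnldf$, apply the pointwise bounds on $\knl^{1-q}$ to convert to $L^q$-norms, invoke Proposition~\ref{prop:quotientestimate}, and then add $\|\diver\qnl^*\|_{L^q(\O)}^q=\|f\|_{L^q(\O)}^q$ for the graph norm. The paper compresses this into a single chain of inequalities via $q\overline{\kappa}^{q-1}\hat{I}_\delta(\knl;\cdot)$, but the argument is identical.
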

\begin{proof}
  Let the flux \(\qnldf\in \Qd(f)\) be as in Proposition~\ref{prop:quotientestimate}.
  Since it is feasible in~\eqref{eq:nonlocal_dual}, we have the following string of inequalities:
  \begin{equation*}
    \|\qnl^*\|_{L^q(\Od \times \Od)}^q \leq
    q \overline{\kappa}^{q-1}\hat{I}_{\delta}(\knl;\qnl^*) \leq 
    q \overline{\kappa}^{q-1}\hat{I}_{\delta}(\knl;\qnldf) \leq
    \left(\frac{\overline{\kappa}}{\underline{\kappa}}\right)^{q-1} \|\qnldf\|_{L^q(\Od \times \Od)}^q
    \leq \left(\frac{\overline{\kappa}}{\underline{\kappa}}\right)^{q-1} C^q \|f\|^q_{L^q(\O)}.
  \end{equation*} 
  Finally,
  \begin{equation*}
    \|\qnl^*\|_{\Qd}^q = \|\qnl^*\|_{L^q(\Od \times \Od)}^q + \|f\|^q_{L^q(\O)}
    \leq \bigg[\left(\frac{\overline{\kappa}}{\underline{\kappa}}\right)^{q-1}C^q + 1\bigg]\| f \|^q_{L^q(\O)}.\qedhere
  \end{equation*} 
\end{proof}

\begin{corollary}[Local Lipschitz semicontinuity of the optimal value, uniformly for small \(\delta>0\)]\label{cor:Lip_stab}
  Let the constants \(\overline{\delta}>0\) and \(C>0\) be as in Proposition~\ref{prop:quotientestimate}, and let \(\knl \in \kadmd\) be arbitrary.
  We will denote by \(\qnl_1\in \Qd(f_1)\) and \(\qnl_2\in \Qd(f_2)\) the unique optimal solutions to~\eqref{eq:nonlocal_dual} corresponding to the heat sources \(f=f_1\in L^q(\O)\) and \(f=f_2\in L^q(\O)\) respectively.
  Then there is a positive constant \(L=L(\|f_1\|_{L^q(\O)},q, C,\underline{\kappa})\), continuous and nondecreasing with respect to \(\|f_1\|_{L^q(\O)}\), but independent from \(\delta>0\) such that
  \[
    \Id_{\delta}(\knl;\qnl_2) \leq \Id_{\delta}(\knl;\qnl_1) + L\|f_2-f_1\|_{L^q(\O)}, \qquad \forall f_2 \in B(f_1,1).
  \]
\end{corollary}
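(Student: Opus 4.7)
The plan is to construct a competitor for \(\qnl_2\) from \(\qnl_1\) by correcting for the difference in the divergence constraints, and then exploit the smoothness of \(t\mapsto |t|^q\) to compare dual energies.

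First I would invoke Proposition~\ref{prop:quotientestimate} applied to the source \(g = f_2-f_1 \in L^q(\O)\) to produce a flux \(\qnldf[g] \in \Qd(f_2-f_1)\) with \(\|\qnldf[g]\|_{L^q(\Od\times\Od)} \leq C\|f_2-f_1\|_{L^q(\O)}\). Then \(\tilde{\qnl} := \qnl_1 + \qnldf[g]\) satisfies \(\diver\tilde{\qnl} = f_1 + (f_2-f_1) = f_2\), so \(\tilde{\qnl} \in \Qd(f_2)\) is feasible in the dual problem for \(f_2\). By optimality of \(\qnl_2\),
\[
  \Id_{\delta}(\knl;\qnl_2) \leq \Id_{\delta}(\knl;\tilde{\qnl}) = \frac{1}{q}\int_{\Od}\int_{\Od}\knl^{1-q}(x,x')|\qnl_1(x,x')+\qnldf[g](x,x')|^q\dx\dx'.
\]

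The key inequality is the convexity estimate \(|a+b|^q - |a|^q \leq q|a+b|^{q-1}|b|\), which follows from the subgradient inequality for the convex function \(t\mapsto |t|^q\) applied at \(a+b\). Subtracting \(\Id_{\delta}(\knl;\qnl_1)\), bounding \(\knl^{1-q}\leq \underline{\kappa}^{1-q}\), and applying H\"{o}lder's inequality yields
\[
  \Id_{\delta}(\knl;\qnl_2) - \Id_{\delta}(\knl;\qnl_1) \leq \underline{\kappa}^{1-q}\,\|\qnl_1+\qnldf[g]\|_{L^q(\Od\times\Od)}^{q-1}\,\|\qnldf[g]\|_{L^q(\Od\times\Od)}.
\]

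Next I would bound the two \(L^q\) norms on the right. The triangle inequality combined with Proposition~\ref{prop:quotientestimate} and the a priori estimate of Corollary~\ref{cor:apriori} gives
\[
  \|\qnl_1+\qnldf[g]\|_{L^q(\Od\times\Od)} \leq \left(\tfrac{\overline{\kappa}}{\underline{\kappa}}\right)^{(q-1)/q}\! C\,\|f_1\|_{L^q(\O)} + C\|f_2-f_1\|_{L^q(\O)},
\]
and for \(f_2\in B(f_1,1)\) the last term is at most \(C\). Combining with the bound \(\|\qnldf[g]\|_{L^q(\Od\times\Od)}\leq C\|f_2-f_1\|_{L^q(\O)}\) produces the desired estimate with
\[
  L = \underline{\kappa}^{1-q} C \Bigl[\left(\tfrac{\overline{\kappa}}{\underline{\kappa}}\right)^{(q-1)/q}\! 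C\,\|f_1\|_{L^q(\O)} + C\Bigr]^{q-1},
\]
which is manifestly continuous and nondecreasing in \(\|f_1\|_{L^q(\O)}\) and independent of \(\delta\in(0,\overline{\delta})\).

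I do not foresee a substantial obstacle, as the argument is structural: the whole point of Proposition~\ref{prop:quotientestimate} was precisely to provide a right inverse of \(\diver\) with norm uniform in \(\delta\), and once such a right inverse is available the construction \(\tilde{\qnl}= \qnl_1 + \qnldf[f_2-f_1]\) reduces the problem to a single application of the convexity inequality for \(|\cdot|^q\). The only minor point to keep track of is the asymmetry in \(f_1\) and \(f_2\) in the statement, which forces us to bound \(\|\tilde{\qnl}\|_{L^q}\) using \(\|\qnl_1\|_{L^q}\) (controlled by \(\|f_1\|_{L^q}\) via Corollary~\ref{cor:apriori}) plus a term controlled by \(\|f_2-f_1\|_{L^q}\leq 1\).
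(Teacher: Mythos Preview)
Your argument is correct and follows the same core idea as the paper: build the competitor \(\tilde{\qnl}=\qnl_1+\qnl_{\delta,f_2-f_1}\) via Proposition~\ref{prop:quotientestimate}, then compare energies using a first-order expansion of \(|\cdot|^q\). The paper organises the computation slightly differently: it works with the weighted norm \([q\Id_\delta(\knl;\cdot)]^{1/q}\) throughout, applies the triangle inequality to get \([\Id_\delta(\knl;\qnl_2)]^{1/q}\leq\alpha+\beta\) with \(\alpha=[\Id_\delta(\knl;\qnl_1)]^{1/q}\) and \(\beta=[\Id_\delta(\knl;\qnl_{\delta,f_2-f_1})]^{1/q}\), and then uses Taylor's formula \((\alpha+\beta)^q\leq \alpha^q+q(\alpha+\beta)^{q-1}\beta\). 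You instead use the pointwise subgradient inequality \(|a+b|^q-|a|^q\leq q|a+b|^{q-1}|b|\), bound \(\knl^{1-q}\leq\underline{\kappa}^{1-q}\), and apply H\"older in the unweighted \(L^q\).

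One small consequence of this difference: by discarding the weight early and invoking Corollary~\ref{cor:apriori} to bound \(\|\qnl_1\|_{L^q}\), your Lipschitz constant \(L\) picks up a dependence on \(\overline{\kappa}\), whereas the statement asserts \(L=L(\|f_1\|_{L^q},q,C,\underline{\kappa})\). The paper avoids this because it never leaves the weighted norm; \(\alpha\) is bounded directly by \([\Id_\delta(\knl;\qnl_{\delta,f_1})]^{1/q}\leq (q\underline{\kappa}^{q-1})^{-1/q}C\|f_1\|_{L^q}\). You can match the stated dependence by applying H\"older with respect to the weighted measure \(\knl^{1-q}\dx\dx'\) instead of bounding \(\knl^{1-q}\) pointwise, which replaces \(\|\qnl_1+\qnldf[g]\|_{L^q}^{q-1}\) by \([q\Id_\delta(\knl;\tilde{\qnl})]^{(q-1)/q}\) and then proceeds via the triangle inequality for the weighted norm. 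For the downstream application (Proposition~\ref{thm:nonlocal_approximation_2sided}) the extra \(\overline{\kappa}\)-dependence is harmless, but it is worth noting the discrepancy with the precise claim.
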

\begin{proof}
  Instead of trying to fit the current situation into the framework of~\cite[Chapter 4]{bonnans2013perturbation}, we provide a direct and simple proof of this claim.
  Note that \(\qnl_1 + \qnl_{\delta,f_2-f_1} \in \Qd(f_2)\), 
  where we use the notation \(\qnl_{\delta,f}\) from Proposition~\ref{prop:quotientestimate}.
  Consequently
  \(\Id_{\delta}(\knl;\qnl_2) \leq \Id_{\delta}(\knl;\qnl_1+\qnl_{\delta,f_2-f_1}).\)
  In particular, utilizing the triangle inequality we get the estimate
  \[
    [\Id_{\delta}(\knl;\qnl_2)]^{1/q} \leq [\Id_{\delta}(\knl;\qnl_1)]^{1/q} + [\Id_{\delta}(\knl;\qnl_{\delta,f_2-f_1})]^{1/q} = \alpha+\beta.
  \]
  We apply the following estimates to the each of the terms:
  \[
    \begin{aligned}
      \alpha &\leq [\hat{I}_{\delta}(\knl;\qnl_{\delta,f_1})]^{1/q}
      \leq \left[\frac{1}{\underline{\kappa}^{q-1}q}\right]^{1/q}
      \|\qnl_{\delta,f_1}\|_{L^q(\Od \times \Od)}
      \leq \left[\frac{1}{\underline{\kappa}^{q-1}q}\right]^{1/q} C \|f_1\|_{L^q(\O)},\\
      \beta &= [\hat{I}_{\delta}(\knl;\qnl_{\delta,f_2-f_1})]^{1/q}
      \leq \left[\frac{1}{\underline{\kappa}^{q-1}q}\right]^{1/q}
      \|\qnl_{\delta,f_2-f_1}\|_{L^q(\Od \times \Od)}
      \leq \left[\frac{1}{\underline{\kappa}^{q-1}q}\right]^{1/q} C \|f_2-f_1\|_{L^q(\O)}.    \end{aligned}
  \]
  We can now apply Taylor's formula to the function \(\R_+\ni t\mapsto t^q \in \R_+\) to get the estimate
  \[\begin{aligned}
    \Id_{\delta}(\knl;\qnl_2)
    \leq
    [\alpha+\beta]^q
    = \alpha^q + q[\alpha+\theta_{\alpha,\beta}\beta]^{q-1}\beta
    \leq
    \alpha^q
    + q[\alpha+\beta]^{q-1}\beta
    \leq
    \Id_{\delta}(\knl;\qnl_1) + L\|f_2-f_1\|_{L^q(\O)},
  \end{aligned}\]
  where \(\theta_{\alpha,\beta} \in [0,1]\), we have utilized the non-negativity of
  \(\alpha\) and \(\beta\), and the monotonicity of the function \(\mathbb{R}_+ \ni t \mapsto t^{q-1}\in \R_+\).
  Finally, the Lipschitz constant in the estimate above can be explicitly expressed as 
  \(L = q[\alpha+\beta]^{q-1} \left[\frac{1}{\underline{\kappa}^{q-1}q}\right]^{1/q} C\).
  It remains to utilize the upper estimates on \(\alpha\) and \(\beta\), while having the assumption \(\|f_2-f_1\|_{L^q(\O)}\leq 1\) in mind, to conclude the proof.
\end{proof}

In order to rigorously recover the duality relationship between~\eqref{eq:nonlocal_dual} and~\eqref{eq:nl_primal} we are going to need the a slightly different characterization of \(\Udz\), which is akin to~\cite[Proposition~9.18]{brezis2010functional}, see also~\cite[Proposition 4.5]{evgrafov2021dual}.
\begin{proposition}\label{prop:Udz_char}
  Assume that \(u\in L^p(\O)\).  Then \(u\in \Udz\) if and only if there is a constant \(c\geq 0\) such that \(\forall \qnl \in \Qd^a \cap C^{0,1}(\Od\times\Od)\) we have the inequality
  \begin{equation}\label{eq:diver_adj0}
    \bigg|\int_{\O} u(x)\diver \qnl(x)\dx\bigg| \leq c \|\qnl\|_{L^q(\Od\times\Od)}.
  \end{equation}
\end{proposition}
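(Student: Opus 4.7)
The plan is to handle the two directions separately. The forward implication is immediate: for \(u \in \Udz\), the integration-by-parts identity~\eqref{eq:intbyparts} applies to \((\qnl,u) \in \Qd \times \Udz\), and H{\"o}lder's inequality on the right-hand side delivers~\eqref{eq:diver_adj0} with \(c = \|u\|_{\Udz}\).

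For the converse, assume \(u \in L^p(\O)\) satisfies~\eqref{eq:diver_adj0}. First, \(F(\qnl) := -\int_\O u(x)\diver\qnl(x)\dx\) defines a linear functional on \(\Qd^a \cap C^{0,1}(\Od\times\Od)\) of norm at most \(c\) with respect to \(\|\cdot\|_{L^q(\Od\times\Od)}\). The subclass \(C^\infty_c(\Rn\times\Rn) \cap L^q_a(\Od\times\Od)\) is contained in \(\Qd^a \cap C^{0,1}(\Od\times\Od)\) (by Proposition~\ref{prop:diver}, point~\ref{diver:formula}) and is dense in \(L^q_a(\Od\times\Od)\) (by Lemma~\ref{lem:sym}), so \(F\) extends uniquely by continuity to a bounded linear functional on \(L^q_a(\Od\times\Od)\). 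The decomposition \(L^q = L^q_s \oplus L^q_a\) of Lemma~\ref{lem:sym} together with Riesz representation then produces a unique \(g \in L^p_a(\Od\times\Od)\) with \(\|g\|_{L^p} \leq c\) representing \(F\) as \(F(\qnl) = \int_{\Od}\int_{\Od} g\,\qnl\) for every \(\qnl \in L^q_a(\Od\times\Od)\).

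The crux is to identify \(g\) with the pointwise-defined function \(h(x,x') := [u(x) - u(x')]\Wd(x-x')\). Antisymmetry of a Lipschitz \(\qnl\) forces \(\qnl(x,x) = 0\), hence \(|\qnl(x,x')| \leq L|x-x'|\); combined with \(|y|\Wd(y) \in L^p\) from~\eqref{eq:normalization} and \(u \in L^p(\O)\) (extended by zero), this secures absolute integrability of \(u(x)\qnl(x,x')\Wd(x-x')\) on \(\Od\times\Od\). Applying Fubini's theorem to the explicit formula for \(\diver\qnl\) from Proposition~\ref{prop:diver} and exploiting the antisymmetry of \(\qnl\) then yields
\[
  -\int_\O u(x)\diver\qnl(x)\dx = \int_{\Od}\int_{\Od} h(x,x')\qnl(x,x')\dx\dx', \qquad \forall \qnl \in \Qd^a \cap C^{0,1}(\Od\times\Od),
\]
without requiring \(u \in \Udz\). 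Subtracting the two representations of the same quantity gives \(\int\int (h-g)\qnl = 0\) for every \(\qnl \in C^\infty_c(\Rn\times\Rn) \cap L^q_a(\Od\times\Od)\).

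The final step promotes this integral identity to \(h = g\) almost everywhere; the obstacle here is that \(h\) is not a priori globally in \(L^p(\Od\times\Od)\). I would exhaust \(\Od\times\Od\) by the symmetric sets \(A_r = \{\,(x,x') \in \Od\times\Od : |x-x'| \geq r\,\}\) for \(r > 0\). On \(A_r\) the elementary estimate \(\int_{|y|\geq r}\Wd^p(y)\dy \leq r^{-p}\int|y|^p\Wd^p(y)\dy < \infty\) together with \(u \in L^p(\O)\) places \(h \in L^p(A_r)\), so \((h-g)\ind_{A_r}\) defines an element of \(L^p_a(\Od\times\Od)\). Symmetric mollification of the antisymmetric \(L^q\)-function \(|h-g|^{p-2}(h-g)\ind_{A_r}\) (cf.\ Lemma~\ref{lem:sym}) produces admissible test functions in \(C^\infty_c \cap L^q_a\); passing to the limit in the integral identity, justified by H{\"o}lder's inequality on a slight enlargement of \(A_r\) on which \(h \in L^p\) still holds, forces \(\int_{A_r}|h-g|^p = 0\). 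Letting \(r \searrow 0\) concludes \(h = g\) a.e., so \(\grad u = g \in L^p(\Od\times\Od)\) and therefore \(u \in \Udz\). The two main technical difficulties are the Fubini-based integration-by-parts identity for arbitrary \(u \in L^p(\O)\) and this truncation argument, both hinging on the antisymmetry of the test functions.
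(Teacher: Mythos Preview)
Your proof is correct and follows the same overall architecture as the paper's: the forward direction via integration by parts and H\"older, the converse by extending the functional, applying Riesz representation, and then identifying the representative with \(\grad u\) on the off-diagonal sets \(A_r = O_r\). The differences are in execution. The paper first extends the bound from antisymmetric to \emph{all} Lipschitz fluxes (using that symmetric Lipschitz fluxes lie in \(\ker\diver\)), then applies Hahn--Banach to reach all of \(L^q(\Od\times\Od)\), and finally invokes the limit formula \(F_u(\qnl) = -\lim_{\epsilon\to 0}\iint_{O_\epsilon}\qnl\,\grad u\) together with density of \(C^{0,1}(O_\epsilon)\) in \(L^q(O_\epsilon)\) for the identification. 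You instead stay in the antisymmetric subspace throughout, extend by density rather than Hahn--Banach, and exploit the observation that antisymmetric Lipschitz \(\qnl\) satisfy \(|\qnl(x,x')|\le L|x-x'|\) to prove the integration-by-parts identity \(-\int_{\O} u\,\diver\qnl = \iint h\,\qnl\) directly for \(u\in L^p(\O)\) via Fubini, without the limiting procedure. Your route is slightly more elementary (avoids Hahn--Banach) and makes the role of the antisymmetry hypothesis more transparent; the paper's route is terser on the identification step. Both rest on the same essential mechanism.
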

\begin{proof}
  The ``only if'' part follows immediately from the integration by parts formula~\eqref{eq:intbyparts} and H{\"older} inequality; in this case we can use \(c=\|\grad u\|_{L^p(\Od\times\Od)}\).

  To obtain the ``if'' part, we first note that the assumed inequality holds trivially also for the symmetric Lipschitz continuous fluxes owing to Proposition~\ref{prop:diver}, part~\ref{diver:formula}.
  Consequently, for each \(\qnl \in C^{0,1}(\Od\times\Od)\) we can write
  \[\bigg|\int_{\O} u(x)\diver \qnl(x)\dx\bigg|
  =\bigg|\int_{\O} u(x)\diver \qnl_a(x)\dx\bigg|\leq c\|\qnl_a\|_{L^q(\Od\times\Od)}
  \leq c\|\qnl\|_{L^q(\Od\times\Od)},\]
  where in the last inequality we estimate the norm \(\|\qnl_a\|_{L^q(\Od\times\Od)}\) of the antisymmetric part \(\qnl_a(x,x')=\tfrac{1}{2}[\qnl(x,x')-\qnl(x',x)]\) of \(\qnl\) using triangle inequality.\footnote{Less directly, one may also appeal to the complementarity of the symmetric and antisymmetric fluxes (cf.\ Proposition~\ref{lem:sym}) and utilize~\cite[Theorem~2.10]{brezis2010functional} dealing with sums of closed subspaces.}

  Proceeding with integration by parts as in the proof of Proposition~\ref{prop:diver}, part~\ref{diver:formula}, we define a bounded linear functional \(F_u: C^{0,1}(\Od\times\Od)\to\R\) by
  \begin{equation}\label{eq:Fu}
    F_u(\qnl) = \int_{\O} u(x)\diver\qnl(x)\dx = -\lim_{\epsilon\to 0}\iint_{O_\epsilon} \qnl(x,x')\grad u(x,x')\dx\dx',
  \end{equation}
  where as before \(O_\epsilon = \{\, (x,x')\in\Od\times\Od \colon |x-x'|>\epsilon \,\}\).
  We now apply Hahn--Banach theorem (cf.~\cite[Corollary~1.2]{brezis2010functional}) to extend \(F_u\) to a functional on \(L^q(\Od\times\Od)\).
  Owing to Riesz representation (see, for example, \cite[Theorem~4.11]{brezis2010functional}), there is \(\pnl \in L^p(\Od\times\Od)\) such that
  \(F_u(\qnl) = \int_{\Od}\int_{\Od} \pnl(x,x')\qnl(x,x')\dx\dx'\), for all \(\qnl\in L^q(\Od\times\Od)\).
  For each \(\epsilon >0\) the functions \(C^{0,1}(O_\epsilon)\) are dense in \(L^q(O_\epsilon)\), and~\eqref{eq:Fu} implies that \(\pnl\) agrees with \(-\grad u\), almost everywhere in \(O_\epsilon\).
  Thus \(\grad u \in L^p(\Od\times\Od)\), and the proof is concluded.
\end{proof}

\begin{proposition}[Strong duality and regularity of Lagrange multipliers]
  \label{prop:duality_nlocal}
  The dual problem for~\eqref{eq:nonlocal_dual} is (equivalent to) \eqref{eq:nl_primal}.
  The dual problem admits a unique solution \(u^* \in \Udz\) for each small \(\delta>0\),
  each \(\knl \in \kadmd\), and each \(f\in L^q(\O)\).
  The strong duality holds, that is,
  \[\check{\imath}_{\delta}(\ddot{\kappa})+\hat{\imath}_{\delta}(\ddot{\kappa})=0.\]
\end{proposition}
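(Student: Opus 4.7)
The existence of a Lagrange multiplier $u^{*}\in L^{p}(\O)$ satisfying~\eqref{eq:kkt} has already been secured in Theorem~\ref{thm:nonlocal_state_existence}, so only two tasks remain: (i) upgrade the regularity of $u^{*}$ to $u^{*}\in\Udz$, and (ii) convert the optimality system into the strong-duality identity $\check{\imath}_{\delta}(\knl)+\hat{\imath}_{\delta}(\knl)=0$ together with the identification of $u^{*}$ as a primal minimizer.

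For~(i), I would apply the characterization of $\Udz$ furnished by Proposition~\ref{prop:Udz_char}. For an arbitrary antisymmetric Lipschitz test flux $\pnl\in\Qd^{a}\cap C^{0,1}(\Od\times\Od)$ the first relation in~\eqref{eq:kkt} reads
\[
  \int_{\O} u^{*}(x)\diver\pnl(x)\dx = \int_{\Od}\int_{\Od}\knl^{1-q}(x,x')|\qnl^{*}(x,x')|^{q-2}\qnl^{*}(x,x')\pnl(x,x')\dx\dx',
\]
and H\"older's inequality combined with $\knl\geq\underline{\kappa}>0$ and the identity $(q-1)p=q$ bounds the right-hand side by $\underline{\kappa}^{-q/p}\|\qnl^{*}\|_{L^{q}(\Od\times\Od)}^{q/p}\|\pnl\|_{L^{q}(\Od\times\Od)}$. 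Proposition~\ref{prop:Udz_char} then yields $u^{*}\in\Udz$.

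With $u^{*}\in\Udz$ secured, the integration-by-parts formula~\eqref{eq:intbyparts} rewrites the KKT system as
\[
  \int_{\Od}\int_{\Od}\bigl[\knl^{1-q}|\qnl^{*}|^{q-2}\qnl^{*}+\grad u^{*}\bigr]\pnl(x,x')\dx\dx' = 0, \qquad \forall\,\pnl\in\Qd.
\]
Since $\Qd$ is dense in $L^{q}(\Od\times\Od)$ by Proposition~\ref{prop:diver}, point~\ref{diver:formula}, and since the bracket lies in $L^{p}(\Od\times\Od)$ (because $\bigl|\knl^{1-q}|\qnl^{*}|^{q-1}\bigr|^{p}=\knl^{-q}|\qnl^{*}|^{q}\in L^{1}$), we conclude that $\grad u^{*}=-\knl^{1-q}|\qnl^{*}|^{q-2}\qnl^{*}$ a.e.\ in $\Od\times\Od$, equivalently $\qnl^{*}=-\knl\,|\grad u^{*}|^{p-2}\grad u^{*}$.

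For~(ii), weak duality is obtained by applying Young's inequality $ab\leq a^{p}/p+b^{q}/q$ pointwise with $a=\knl^{1/p}|\grad v|$, $b=\knl^{-1/p}|\qnl|$ (noting $-q/p=1-q$), and by rewriting $-\ell(v)=-\int_{\O} v\diver\qnl\dx=\int_{\Od}\int_{\Od}\qnl\cdot\grad v\dx\dx'$ via~\eqref{eq:intbyparts} for any $v\in\Udz$ and $\qnl\in\Qd(f)$; summing these ingredients produces $\check{I}_{\delta}(\knl;v)+\hat{I}_{\delta}(\knl;\qnl)\geq 0$. At the particular pair $(u^{*},\qnl^{*})$ the primal--dual relation derived in the preceding paragraph saturates Young's inequality pointwise and aligns $\qnl^{*}$ antiparallel to $\grad u^{*}$, so equality holds throughout. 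This simultaneously proves strong duality $\check{\imath}_{\delta}(\knl)+\hat{\imath}_{\delta}(\knl)=0$, identifies $u^{*}$ as a minimizer of~\eqref{eq:nl_primal}, and thereby confirms that~\eqref{eq:nl_primal} is the dual of~\eqref{eq:nonlocal_dual}. Uniqueness of $u^{*}$ follows from strict convexity of $\check{I}_{\delta}(\knl;\cdot)$, itself a consequence of the injectivity of $\grad$ (Proposition~\ref{prop:grad0}, point~\ref{grad0:poincare}) together with the strict convexity of $t\mapsto t^{p}$ on $\R_{+}$. The main obstacle is step~(i): the abstract KKT machinery only delivers a multiplier in the ``large'' space $L^{p}(\O)$, and some work is required to see that it actually lies in the much smaller space $\Udz$ in which~\eqref{eq:nl_primal} is posed; Proposition~\ref{prop:Udz_char} was established precisely to automate this transition.
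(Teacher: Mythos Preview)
Your argument is correct, and it takes a genuinely different route from the paper's. The paper computes the Lagrangian dual function \(\tilde{\imath}_{\knl}(u)=\inf_{\qnl\in\Qd}\{\hat{I}_\delta(\knl;\qnl)+\int_\O(f-\diver\qnl)u\}\) explicitly: Proposition~\ref{prop:Udz_char} is used there to show that \(\tilde{\imath}_{\knl}(u)=-\infty\) whenever \(u\notin\Udz\), while for \(u\in\Udz\) integration by parts and density of \(\Qd\) in \(L^q\) reduce the infimum to a pointwise one, yielding \(\tilde{\imath}_{\knl}(u)=-\check{I}_\delta(\knl;u)\); the regularity \(u^*\in\Udz\) and strong duality then follow from the abstract theory in~\cite{bonnans2013perturbation}. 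You instead start from the KKT system~\eqref{eq:kkt}, invoke Proposition~\ref{prop:Udz_char} \emph{directly} on the first optimality condition to upgrade \(u^*\in L^p(\O)\) to \(u^*\in\Udz\), derive the constitutive relation \(\qnl^*=-\knl|\grad u^*|^{p-2}\grad u^*\), and then verify weak duality and its saturation by hand via Young's inequality. Your approach is more self-contained and makes the primal--dual relation explicit, at the cost of not formally identifying~\eqref{eq:nl_primal} as \emph{the} Lagrangian dual (you show it is \emph{a} tight dual bound, which is all that matters for the stated conclusions). The paper's approach is shorter and gives the cleaner structural statement, but leans more heavily on the abstract machinery of~\cite{bonnans2013perturbation}.
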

\begin{proof}
  We compute the dual function \(\tilde{\imath}_{\knl}: L^p(\O)\to \R\cup \{\pm \infty\}\) for~\eqref{eq:nonlocal_dual} as
  \[\begin{aligned}
    \tilde{\imath}_{\knl}(u) &= \inf_{\qnl\in\Qd} 
    \bigg[\hat{I}(\knl;\qnl)
    + \int_{\O}[f(x)-\diver\qnl(x)]u(x)\dx\bigg]\\
    &=\begin{cases}
      -\infty, &\quad u \not\in \Udz,\\
      \displaystyle\ell(u) + \inf_{\qnl\in\Qd} 
     \int_{\Od}\int_{\Od}\bigg[\frac{1}{q}\knl^{1-q}(x,x')|\qnl(x,x')|^q+\qnl(x,x')\grad u(x,x')\bigg]\dx\dx', &\quad u \in \Udz,
    \end{cases}
  \end{aligned}\]
  where we have utilized Proposition~\ref{prop:Udz_char} to arrive at the first case, and the integration by parts in the second.
  To compute the remaining infimum, we utilize the fact that \(\Qd\) is dense in \(L^q(\Od\times\Od)\), see Proposition~\ref{prop:diver}.
  Therefore, for \(u\in\Udz\) we have
  \[\begin{aligned}
    \tilde{\imath}_{\knl}(u) &= 
      \ell(u) + \int_{\Od}\int_{\Od}\inf_{\sigma\in\R}\bigg[\frac{1}{q}\knl^{1-q}(x,x')|\sigma|^q+\sigma\grad u(x,x')\bigg]\dx\dx',
  \end{aligned}\]
  where we have utilized the separability of the problem.
  The last infimum is attained at 
  \(\sigma = -\knl(x,x')|\grad u(x,x')|^{\frac{p}{q}}\text{sign}(\grad u(x,x'))\), which in turn leads to
  \[\begin{aligned}
    \tilde{\imath}_{\knl}(u) = \begin{cases}
      -\infty, &\quad u \not\in \Udz,\\
      \displaystyle\ell(u) -\frac{1}{p} \int_{\Od}\int_{\Od}\knl(x,x') |\grad u(x,x')|^p\dx\dx', &\quad u \in \Udz,
    \end{cases}
    = \begin{cases}
      -\infty, &\quad u \not\in \Udz,\\
      -\check{I}_{\delta}(\knl;u), &\quad u \in \Udz.\end{cases}
  \end{aligned}\]
  The dual problem may thus be stated as
  \[
    \sup_{u \in L^p(\O)}\tilde{\imath}_{\knl}(u) = 
    \sup_{u \in \Udz}[-\check{I}_{\delta}(\knl;u)]
    = -\check{\imath}_{\delta}(\ddot{\kappa}).
  \]
  In particular, the Lagrange multiplier \(u^*\in L^p(\O)\), whose existence and uniqueness has been established in Theorem~\ref{thm:nonlocal_state_existence}, is also the solution to the dual problem and is therefore in \(\Udz\); see~\cite[Section~3.1.1]{bonnans2013perturbation}. Consequently, the strong duality holds, leading to the equality \(\check{\imath}_{\delta}(\ddot{\kappa})+\hat{\imath}_{\delta}(\ddot{\kappa})=0\).
\end{proof}

\section{Control in the coefficients of the nonlocal \(p\)-Lapacian}
\label{sec:nlcontrol}
Having done all the preparatory work in Section~\ref{sec:nldiver}, we now turn our attention to the nonlocal version of~\eqref{eq:local_min}:
\begin{equation}\label{eq:nl_min}
  d_{\delta} := \inf_{\knl \in \kadmd} \hat{\imath}_{\delta}(\knl).
\end{equation}
Owing to the strong duality between~\eqref{eq:nonlocal_dual} and~\eqref{eq:nl_primal} established in Proposition~\ref{prop:duality_nlocal}, this problem is equivalent to~\eqref{eq:nl_saddle}.
One may therefore appeal to the existence of solutions to~\eqref{eq:nl_saddle} to assert the existence of solutions to~\eqref{eq:nl_min}.
A direct proof of existence, utilizng the convexity of the problem, is not much more difficult.
\begin{theorem}\label{thm:existence_nldesign}
  Problem~\eqref{eq:nl_min} admits an optimal solution \(\knl^* \in \kadmd\) for each small \(\delta >0\) and each \(f \in L^q(\O)\).
\end{theorem}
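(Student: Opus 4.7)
The plan is to apply the direct method of the calculus of variations in conjunction with the strong duality established in Proposition~\ref{prop:duality_nlocal}. Let $\{\knl_k\}_{k\in\N} \subset \kadmd$ be a minimizing sequence for~\eqref{eq:nl_min}, so that $\hat{\imath}_\delta(\knl_k) \to d_\delta$ as $k \to \infty$. Since $\kadmd$ is weakly$^*$ compact in $L^\infty(\Od \times \Od)$ by assumption, we can extract a (non-relabelled) subsequence with $\knl_k \wsto \knl^*$ for some $\knl^* \in \kadmd$. It then suffices to verify the inequality $\hat{\imath}_\delta(\knl^*) \leq d_\delta$, that is, the sequential weak$^*$ lower semicontinuity of $\hat{\imath}_\delta$ at $\knl^*$ along this sequence.

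The crucial step is to use strong duality to produce a favourable representation of $\hat{\imath}_\delta$. Combining Proposition~\ref{prop:duality_nlocal} with~\eqref{eq:nl_primal}--\eqref{eq:nl_primal1}, for each $\knl \in \kadmd$ we have
\begin{equation*}
  \hat{\imath}_\delta(\knl) = -\check{\imath}_\delta(\knl) = \sup_{u \in \Udz} \bigg[ \ell(u) - \frac{1}{p} \int_{\Od}\int_{\Od} \knl(x,x') |\grad u(x,x')|^p \dx \dx' \bigg].
\end{equation*}
For each fixed $u \in \Udz$, the function $|\grad u|^p$ belongs to $L^1(\Od \times \Od)$ by the very definition of $\Udz$ (see Proposition~\ref{prop:grad0}), so the bracketed expression is an affine functional of $\knl$ which is \emph{weakly$^*$ continuous} on $L^\infty(\Od \times \Od)$. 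Consequently, $\hat{\imath}_\delta$ is the pointwise supremum over $u \in \Udz$ of a family of weakly$^*$ continuous affine functions of $\knl$, and hence is itself weakly$^*$ lower semicontinuous. Applied along the minimizing sequence this yields
\begin{equation*}
  \hat{\imath}_\delta(\knl^*) \leq \liminf_{k \to \infty} \hat{\imath}_\delta(\knl_k) = d_\delta,
\end{equation*}
so that $\knl^* \in \kadmd$ is the sought optimal solution.

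I do not anticipate any significant obstacle: the strong duality does essentially all the work by replacing the $\knl$-dependent infimum over $\qnl$ in the original definition of $\hat{\imath}_\delta$ with a supremum over a $\knl$-independent set, instantly exhibiting $\hat{\imath}_\delta$ as a weakly$^*$ lower semicontinuous envelope. An alternative, more hands-on route would exploit the joint convexity of the perspective-type integrand $(\knl,\qnl) \mapsto \knl^{1-q}|\qnl|^q$ on $(0,\infty) \times \R$ together with the uniform a priori bounds on optimal fluxes furnished by Corollary~\ref{cor:apriori} to extract a simultaneous weak limit in $(\knl,\qnl)$ and pass to the liminf; however, this approach is more delicate and produces no additional information over the duality argument sketched above.
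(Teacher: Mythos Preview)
Your proof is correct, but it follows a different route than the paper's.  The paper reformulates~\eqref{eq:nl_min} as the joint infimum
\(
  \inf_{(\knl,\qnl)\in \kadmd\times\Qd(f)} \hat{I}_{\delta}(\knl;\qnl)
\)
and then runs the direct method on the pair \((\knl,\qnl)\): after extracting weak/weak\(^*\) limits, it applies Mazur's lemma to obtain strongly (and then a.e.) convergent convex combinations, and concludes via Fatou's lemma together with the \emph{joint} convexity of the integrand \((\kappa,\sigma)\mapsto \kappa^{1-q}|\sigma|^q\).  In other words, the paper takes precisely the ``more hands-on route'' you sketch at the end of your proposal.

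Your argument is shorter and cleaner because strong duality does the heavy lifting: writing \(\hat{\imath}_\delta(\knl)=\sup_{u\in\Udz}[\ell(u)-\tfrac{1}{p}\iint \knl\,|\grad u|^p]\) displays \(\hat{\imath}_\delta\) as a supremum of weak\(^*\) continuous affine functionals of \(\knl\), and weak\(^*\) lower semicontinuity is immediate.  The paper in fact acknowledges, in the paragraph preceding the theorem, that one ``may appeal to the existence of solutions to~\eqref{eq:nl_saddle}'' via duality, but opts for the convexity argument instead.  The trade-off is that the paper's proof is self-contained with respect to the dual variables and does not invoke Proposition~\ref{prop:duality_nlocal}; this makes it slightly more robust (for instance, it would still apply under structural assumptions on \(\kadmd\) for which the primal problem is less convenient), at the cost of the Mazur--Fatou bookkeeping you avoid.
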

\begin{proof}
  Owing to Theorem~\ref{thm:nonlocal_state_existence}, the claim is equivalent to establishing that the infimum 
  \begin{equation}\label{eq:dblinf}
    \inf_{(\knl,\qnl)\in \kadmd\times\Qd(f)} \hat{I}_{\delta}(\knl;\qnl),
  \end{equation} is attained.
  The latter follows using the direct method of calculus of variations while utilizing the convexity of the objective and the feasible set.

  Indeed, we note  that owing to the bound \(\knl \leq \overline{\kappa}\), for all \(\knl \in \kadmd\), the lower-level sets of \(\hat{I}_{\delta}\) are bounded in \(L^q(\Od\times\Od)\), uniformly with respect to \(\knl\in\kadmd\), owing to the lower bound
  \(
    \hat{I}_{\delta}(\knl;\qnl) \geq (\overline{\kappa}^{1-q}/q)\|\qnl\|_{L^q(\Od\times\Od)}^q.
  \)
  When intersected with \(\Qd(f)\), these sets are therefore bounded in \(\Qd\).

  Let now \(\{\knl_k, \qnl_k\}_{k=1}^\infty\) be a minimizing sequence for~\eqref{eq:dblinf}; without loss of generality we will assume the sequence \(\{ \hat{I}_\delta(\knl_k;\qnl_k) \}_{k=1}^\infty\) is non-increasing.
  By our assumption,  \(\kadmd\) is weakly\(^*\) sequentially compact in \(L^\infty(\Od\times\Od)\), and consequently \(\{\knl_k\}_{k=1}^\infty\) contains a weakly\(^*\) convergent subsequence (which we will not relabel), whose limit \(\knl^*\) is in \(\kadmd\).
  Note that this convergence also happens weakly in \(L^s(\Od\times\Od)\), for any \(1<s<\infty\).
  Furthermore, we can extract a subsequence out of \(\{\qnl_k\}_{k=1}^\infty\), converging weakly in \(\Qd\) to a limit \(\qnl^*\in \Qd(f)\); switching to this subsequence is also not relabeled.
  
  We now apply Mazur's lemma (see for example~\cite[Corollary 3.8]{brezis2010functional}).
  Thus there is a function \(M:\N\to\N\) and a sequence of sets of nonnegative real numbers \(\{\, \lambda(m)_k\colon k=m,\dots,M(m)\,\}\), such that
  \(\sum_{k=m}^{M(m)}\lambda(m)_k=1\) and
  \(\lim_{m\to\infty}\|\sum_{k=m}^{M(m)}\lambda(m)_k (\knl_k,\qnl_k) - (\knl^*,\qnl^*)\|=0\)
  in \(L^s(\Od\times\Od)\times \Qd\).
  In particular, there is a further subsequence of convex combinations of \((\knl_k,\qnl_k)\), say \(m'\), which converges almost everywhere towards \( (\knl^*,\qnl^*)\) (see for example~\cite[Theorem 4.9]{brezis2010functional}).
  Since the integrand \([\underline{\kappa},\overline{\kappa}]\times \R \ni (\kappa,\sigma) \mapsto \kappa^{1-q}|\sigma|^q \in \R_{+}\) is convex and nonnegative, we can apply   Fatou's lemma (see for example~\cite[Lemma 4.1]{brezis2010functional}) to arrive at the desired conclusion:
  \[
  \begin{aligned}
  \hat{I}_\delta(\knl^*;\qnl^*)
  &\leq \liminf_{m'\to\infty} \hat{I}_\delta\left(\sum_{k=m'}^{M(m')}\lambda(m')_k \knl_k;
  \sum_{k=m'}^{M(m')}\lambda(m')_k \qnl_k\right)
  \\&\leq
  \liminf_{m'\to\infty}\left[\sum_{k=m'}^{M(m')}\lambda(m')_k
  \hat{I}_\delta(\knl_k;\qnl_k)\right]
  \leq
  \liminf_{m'\to\infty}
  \hat{I}_\delta(\knl_{m'};\qnl_{m'}),
  \end{aligned}
  \]
  where in the last inequality we have utilized the assumed monotonicity of the sequence \(\{ \hat{I}_\delta(\knl_k;\qnl_k) \}_{k=1}^\infty\).
  Thus we have shown that \((\knl^*,\qnl^*)\) is an optimal solution to~\eqref{eq:dblinf}.
\end{proof}

\section{Localization}\label{sec:localization}

We will now investigate the connection between the problem~\eqref{eq:nl_min} and the corresponding local problem~\eqref{eq:local_min}.
In order to do this properly, we have to make assumptions about the relationship between the local and nonlocal conductivities, that is, between \(\kadm\) and \(\kadmd\).
We will first extend all functions in \(\kadm\) by some fixed constant in the interval \([\underline{\kappa},\overline{\kappa}]\) in \(\Rn\setminus \O\), although we note that other approaches are clearly possible. Once this is done, we will assume that
\begin{equation}\label{eq:kadmd1}
  \kadmd = \left\{\, \knl \in L^\infty(\Od\times\Od) \colon \knl(x,x') = \left[\frac{\kappa^{1-q}(x) + \kappa^{1-q}(x')}{2}\right]^{1/(1-q)},
  \quad \forall (x,x') \in \Od\times\Od, \quad \kappa \in \kadm \,\right\}.
\end{equation}
Generally speaking, there is no reason to assume that the set \(\kadmd\) defined in~\eqref{eq:kadmd1} is convex, and consequently Theorem~\ref{thm:existence_nldesign} as stated may not be applicable in the present case.
However, since there is a one-to-one correspondence between \(\kadmd\) and \(\kadm\) in this case, we can write (with a slight abuse of notation)
\begin{equation}\label{eq:ihat1}
  \begin{aligned}
  \hat{I}_{\delta}(\kappa;\qnl)
  &= \frac{1}{q}\int_{\Od}\int_{\Od} \left[\frac{\kappa^{1-q}(x) + \kappa^{1-q}(x')}{2}\right]|\qnl(x,x')|^q\dx\dx', \quad \text{and similarly}\\
  \hat{\imath}_{\delta}(\kappa) &= \inf_{\qnl\in\Qd(f)} \hat{I}_{\delta}(\kappa;\qnl),
\end{aligned}
\end{equation}
instead of \(\hat{I}_{\delta}(\knl;\qnl)\) and \(\hat{\imath}_{\delta}(\knl)\), having in mind that \(\kappa\in\kadm\) and \(\knl\in\kadmd\) are related through~\eqref{eq:kadmd1}.
With this notational agreement we have the trivial equality
\[
  d_\delta 
  = \inf_{\knl \in \kadmd}\hat{\imath}_{\delta}(\knl)
  = \inf_{(\knl;\qnl)\in\kadmd\times\Qd(f)} \hat{I}_{\delta}(\knl;\qnl)
  = \inf_{(\kappa;\qnl)\in\kadm\times\Qd(f)} \hat{I}_{\delta}(\kappa;\qnl) 
  = \inf_{\kappa\in\kadm}\hat{\imath}_{\delta}(\kappa).
\]
The reasoning of Theorem~\ref{thm:existence_nldesign} applies to the third infimum from the left: indeed the objective is an integral of a convex nonnegative function, uniformly coercive with respect to the second argument, and \(\kadm\) is convex and weakly\(^*\) compact in \(L^\infty(\Omega)\).  
This infimum is therefore attained at some \((\kappa^*,\qnl^*)\in \kadm\times \Qd(f)\).
As a result, the rest of the infima are also attained at the corresponding \(\knl^*\in\kadmd\).

Before we proceed with the estimates, we would like to record the following simple observation, which will be utilized in what follows.
If in~\eqref{eq:ihat1} we have \(\qnl\in L^q_a(\Od\times\Od)\), then 
\[
  \hat{I}_{\delta}(\kappa;\qnl) = \frac{1}{q}\int_{\Od}\kappa^{1-q}(x)\int_{\Od}|\qnl(x,x')|^q\dx'\dx.
\]

\subsection{An upper estimate for the local problem}\label{subsec:gammaconv_lininf}

For each \(\qnl \in L^{q}(\Od\times\Od)\) let us define the following ``flux recovery'' operator
\begin{equation}\label{eq:Rdef}
  R_\delta \qnl(x) = \int_{\Od} (x-x')\qnl(x,x')\Wd(x-x')\dx'.
\end{equation}

\begin{proposition}\label{prop:claim0}
  \(R_\delta \in \mathcal{B}(L^{q}(\Od\times\Od),L^q(\Od;\Rn))\)
  and \(\Idloc(\kappa;R_\delta \qnl)\leq \hat{I}_\delta(\kappa;\qnl)\),
  \(\forall \qnl \in L^q_a(\Od\times\Od)\)\footnote{Also \(\forall \qnl \in L^q_s(\Od\times\Od)\)} and
  \(\forall \kappa \in \kadm\).
\end{proposition}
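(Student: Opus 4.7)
The plan is to reduce the whole proposition to a single sharp pointwise estimate,
\(|R_\delta \qnl(x)|^q \leq \int_{\Od} |\qnl(x,x')|^q\dx'\) for a.e.\ \(x\in\Od\),
from which both the boundedness and the energy inequality follow with very little additional work.

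To obtain this pointwise bound, I would fix \(x\in\Od\) and an arbitrary unit vector \(e\in\Sn\), write \(x-x' = |x-x'|\,e_x(x')\) with \(e_x(x')=(x-x')/|x-x'|\), and apply H{\"o}lder's inequality with exponents \(p,q\) to the scalar integral \(e\cdot R_\delta \qnl(x)\), obtaining
\[
  |e\cdot R_\delta\qnl(x)| \leq \left(\int_{\Od}|x-x'|^p\Wd^p(x-x')|e\cdot e_x(x')|^p\dx'\right)^{1/p}\left(\int_{\Od}|\qnl(x,x')|^q\dx'\right)^{1/q}.
\]
The crux is to show that the first factor is bounded by \(1\). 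For this I would extend the domain of integration to \(\R^n\) (admissible because \(\supp \Wd \subseteq B(0,\delta)\)) and switch to spherical coordinates \(x' = x - rs\) with \(r>0,\,s\in\Sn\); the radial and angular parts then decouple, and by the normalization~\eqref{eq:normalization} (applied to the radial part) together with the definition of \(K_{p,n}\) (applied to the angular part) the product equals \(K_{p,n}^{-1}|\Sn|^{-1}\cdot K_{p,n}|\Sn| = 1\) exactly. Taking the supremum over \(e\in\Sn\) on the left-hand side then yields the pointwise inequality.

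With the pointwise bound established, boundedness \(R_\delta \in \mathcal{B}(L^{q}(\Od\times\Od),L^q(\Od;\Rn))\) follows at once by raising to the \(q\)-th power, integrating over \(x\in\Od\) and applying Fubini. For the energy inequality I would first note that when \(\qnl\) is either antisymmetric or symmetric, a change of variables \(x\leftrightarrow x'\) combined with~\eqref{eq:kappa_symm} collapses the definition of \(\hat{I}_\delta\) to the asymmetric form
\[
  \hat{I}_\delta(\kappa;\qnl) = \frac{1}{q}\int_{\Od}\kappa^{1-q}(x)\int_{\Od}|\qnl(x,x')|^q\dx'\dx,
\]
which is already flagged in the paragraph preceding the proposition. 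Multiplying the pointwise bound by the nonnegative weight \(\kappa^{1-q}(x)/q\) and integrating over \(\O\subseteq\Od\) then yields \(\Idloc(\kappa;R_\delta\qnl) \leq \hat{I}_\delta(\kappa;\qnl)\) directly.

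The only genuinely non-trivial ingredient is the sharp constant \(1\) arising from the interplay between~\eqref{eq:normalization} and the definition of \(K_{p,n}\); that is precisely what the normalization of \(\Wd\) was engineered to deliver, and once it is isolated the remainder of the argument is routine bookkeeping with Fubini and nonnegativity.
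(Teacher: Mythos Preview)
Your argument is correct and, in fact, slightly more streamlined than the paper's. The key computation---that \(\int_{\R^n}|z|^p\Wd^p(z)\,|e\cdot z/|z||^p\,\mathrm{d}z = 1\) for any unit vector \(e\), by combining the radial normalization~\eqref{eq:normalization} with the definition of \(K_{p,n}\)---is exactly the paper's estimate~\eqref{eq:BBM0}, only with a fixed direction \(e\) in place of \(\psi(x)/|\psi(x)|\). The difference lies in how this estimate is deployed. You extract from it the sharp pointwise bound \(|R_\delta\qnl(x)|^q\leq \int_{\Od}|\qnl(x,x')|^q\,\mathrm{d}x'\) once and for all, and then both boundedness and the energy inequality follow by multiplying by the appropriate weight and integrating. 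The paper instead first proves boundedness with the cruder constant \(K_{p,n}^{-q/p}\) (using H\"older without the directional factor), and then establishes the energy inequality separately via a duality/testing argument: pair \(R_\delta\qnl\) against a test field \(\psi\), apply~\eqref{eq:BBM0}, and finally choose \(\psi(x)=\kappa^{1-q}(x)|R_\delta\qnl(x)|^{q-2}R_\delta\qnl(x)\). Your route avoids this last algebraic step entirely; the paper's route, on the other hand, makes the underlying H\"older/duality structure of \(\Idloc\) and \(\Id_\delta\) a bit more visible. Both are sound.
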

\begin{proof}
  We begin by applying H{\"o}lder inequality to obtain the estimate
  \[\begin{aligned}
  |R_\delta \qnl(x)|^q &= \left[\int_{\Od} (x-x')\qnl(x,x')\Wd(x-x')\dx'\right]^q
  \\&\leq \int_{\Od} |\qnl(x,x')|^q \dx' \left[\int_{\Od} |x-x'|^p\Wd^p(x-x')\dx'\right]^{q/p}
  \leq K_{p,n}^{-q/p}\int_{\Od} |\qnl(x,x')|^q \dx',
  \end{aligned}\]
  where the last inequality is owing to the normalization of the kernel \(\Wd(\cdot)\).
  It only remains to integrate both sides with respect to \(x\) to arrive at the first claim.

  Let us now take an arbitrary \(\psi \in L^p(\O_\delta;\Rn)\), \(\kappa \in \kadm\),
  and \(\qnl \in L^q_a(\Od\times\Od)\).
  We apply H{\"o}lder inequality as follows:
  \[\begin{aligned}
  &\int_{\O} R_\delta \qnl(x)\cdot \psi(x)\dx
  =
  \int_{\O}\int_{\Od} \kappa^{(1-q)/q}(x) \kappa^{1/p}(x)\psi(x)\cdot(x-x')\qnl(x,x')\Wd(x-x')\dx'\dx
  \\&\leq
  \left[\int_{\O}\int_{\Od} \kappa^{1-q}(x) |\qnl(x,x')|^q \dx'\dx\right]^{1/q}
  \left[\int_{\O}\int_{\Od} \kappa(x)|\psi(x)\cdot(x-x')\Wd(x-x')|^p \dx'\dx\right]^{1/p}
  \\&\leq
  \left[q\hat{I}_\delta(\kappa; \qnl)\right]^{1/q}
  \left[\int_{\O}\kappa(x)|\psi(x)|^p\dx\right]^{1/p},
  \end{aligned}\]
  where we have used the fact that for all \(x\in \Od\) we have the inequality
  \begin{equation}\label{eq:BBM0}
    \begin{aligned}
   \int_{\Od}[\psi(x)&\cdot(x-x')\Wd(x-x')]^p\dx'
   \le
    \int_{B(0,\delta)} \bigg|\psi(x)\cdot\frac{z}{|z|}\bigg|^p |z|^p\Wd^p(z)\,\mathrm{d}z
    \\
    &=\int_0^\delta r^{n-1} r^p w^p_\delta(r) \left(\int_{\Sn} \left| \psi(x)\cdot s\right|^p\,\mathrm{d}s \right)\, \mathrm{d}r
    \\&\le |\psi(x)|^p
    \underbrace{\int_{{\Rn}} |z|^p w^p_\delta(z) \,\mathrm{d}z}_{=K_{p,n}^{-1}} \underbrace{|\Sn|^{-1}\int_{\Sn} \left|e\cdot s\right|^p\,\mathrm{d}s}_{=K_{p,n}}
    = |\psi(x)|^p,
  \end{aligned}
  \end{equation}
 where \(e\in\Sn\) is an arbitrary unit vector.
 It remains to take \(\psi(x) = \kappa^{1-q}(x)|R_\delta\qnl(x)|^{q-2}R_\delta \qnl(x)\):
\[\begin{aligned}
  q\Idloc(\kappa; R_\delta \qnl)
  &= \int_{\O} \kappa^{1-q}(x)|R_\delta\qnl(x)|^{q-2} R_\delta \qnl(x)\cdot R_\delta \qnl(x)\dx
  \\&\leq
  \left[q\hat{I}_\delta(\kappa; \qnl)\right]^{1/q}
  \left[\int_{\O}\kappa^{1+p(1-q)}(x)|R_\delta \qnl(x)|^{p(q-1)}\dx\right]^{1/p}
  =
  \left[q\hat{I}_\delta(\kappa; \qnl)\right]^{1/q}
  \left[q\Idloc(\kappa; R_\delta \qnl)\right]^{1/p}.\qedhere
\end{aligned}\]
\end{proof}

\begin{proposition}\label{prop:claim1_new}
  Assume that the sequence \(\qnl_{\delta_k} \in \Q_{\delta_k}(f)\), \(\delta_k \in (0,\bar{\delta})\),
  \(\lim_{k\to\infty}\delta_k = 0\), is bounded, and
  \(R_{\delta_k}\qnl_{\delta_k} \rightharpoonup \hat{\sigma}\), weakly
  in \(L^q(\O;\Rn)\).
  Then \(\hat{\sigma} \in \Q(f)\).
\end{proposition}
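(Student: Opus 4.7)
The plan is to verify that $\Div \hat\sigma = f$ in the sense of distributions on $\O$. Combined with $\hat\sigma \in L^q(\O;\Rn)$ (which is automatic from the weak convergence in that space), this will place $\hat\sigma$ in $\Q(f)$ and conclude the proof.

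First, I would fix an arbitrary $\phi \in C^\infty_c(\O)$ and take $k$ large enough that $\supp\phi + \overline{B(0,\delta_k)} \subset \O$, so that $\phi$, extended by zero, lies in $\U_{\delta_k,0}$. The integration-by-parts identity~\eqref{eq:intbyparts} and the constraint $\mathcal{D}_{\delta_k}\qnl_{\delta_k} = f$ then give
\begin{equation*}
\int_{\O} f(x)\phi(x)\dx = -\int_{\Omega_{\delta_k}}\!\int_{\Omega_{\delta_k}}\qnl_{\delta_k}(x,x')[\phi(x)-\phi(x')]\omega_{\delta_k}(x-x')\,\mathrm{d}x'\dx.
\end{equation*}

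Next, I would use a first-order Taylor expansion $\phi(x) - \phi(x') = \nabla\phi(x)\cdot(x-x') - r_k(x,x')$ with the pointwise remainder bound $|r_k(x,x')| \leq \tfrac{1}{2}\|\nabla^2\phi\|_{L^\infty}|x-x'|^2$. After substitution, the linear-order contribution on the right-hand side, via Fubini and the definition~\eqref{eq:Rdef} of $R_{\delta_k}$, becomes exactly $-\int_{\O} R_{\delta_k}\qnl_{\delta_k}(x)\cdot\nabla\phi(x)\dx$ (the outer integration reduces to $\O$ because $\supp\nabla\phi \subset \O$), which converges to $-\int_{\O}\hat\sigma\cdot\nabla\phi\dx$ by the weak convergence hypothesis.

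The main obstacle is to show that the Taylor remainder
\begin{equation*}
E_k := \int_{\Omega_{\delta_k}}\!\int_{\Omega_{\delta_k}} \qnl_{\delta_k}(x,x')\,r_k(x,x')\,\omega_{\delta_k}(x-x')\,\mathrm{d}x'\dx
\end{equation*}
tends to zero as $k\to\infty$. The key observation is that $\omega_{\delta_k}$ is supported in $\overline{B(0,\delta_k)}$, so $|x-x'|^2\omega_{\delta_k}(x-x') \leq \delta_k\,|x-x'|\omega_{\delta_k}(x-x')$; H\"older's inequality together with the normalization~\eqref{eq:normalization} then yields
\begin{equation*}
|E_k| \leq \tfrac{\delta_k}{2}\|\nabla^2\phi\|_{L^\infty}\,\|\qnl_{\delta_k}\|_{L^q(\Omega_{\delta_k}\times\Omega_{\delta_k})}\bigl(|\Omega_{\delta_k}|\,K_{p,n}^{-1}\bigr)^{1/p},
\end{equation*}
which vanishes thanks to the uniform boundedness of $\|\qnl_{\delta_k}\|_{L^q}$ and $|\Omega_{\delta_k}|$. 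Passing to the limit in the identity above then yields $\int_{\O} f\phi\dx = -\int_{\O}\hat\sigma\cdot\nabla\phi\dx$ for every $\phi\in C^\infty_c(\O)$, which is exactly $\Div\hat\sigma = f$ distributionally; since $f \in L^q(\O)$, this places $\hat\sigma$ in $\Q(f)$.
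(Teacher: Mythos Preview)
Your proof is correct and follows essentially the same route as the paper: test against $\phi\in C^\infty_c(\O)$, use integration by parts with the constraint $\mathcal{D}_{\delta_k}\qnl_{\delta_k}=f$, replace $\phi(x)-\phi(x')$ by its first-order Taylor expansion so that the leading term produces $R_{\delta_k}\qnl_{\delta_k}$, and bound the remainder via $|x-x'|^2\omega_{\delta_k}\leq \delta_k|x-x'|\omega_{\delta_k}$ together with H\"older and the normalization~\eqref{eq:normalization}. The only cosmetic differences are that the paper writes the computation starting from $\big|\int\nabla\psi\cdot R_{\delta_k}\qnl_{\delta_k}+\ell(\psi)\big|$ and omits the (harmless) factor $|\Omega_{\delta_k}|^{1/p}$ in the final bound; your preliminary restriction to $k$ large enough that $\supp\phi+\overline{B(0,\delta_k)}\subset\O$ is not actually needed, since $\phi\in C^\infty_c(\O)$ already lies in $\U_{\delta_k,0}$ for every $\delta_k$.
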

\begin{proof}
  Let \(\psi \in C^\infty_c(\O)\) be arbitrary.
  Utilizing the definitions~\eqref{eq:ell} and~\eqref{eq:Rdef} and the second order Taylor theorem for \(\psi\),
  we obtain the estimate
   \begin{equation*}
    \begin{aligned}
      &\bigg| \int_{\O_{\delta_k}} \nabla\psi(x)\cdot R_{\delta_k}\qnl_{\delta_k}(x)\dx + \ell(\psi)\bigg|
      =
      \bigg| \int_{\O_{\delta_k}} \nabla\psi(x)\cdot R_{\delta_k}\qnl_{\delta_k}(x)\dx
      +  \int_{\O}\mathcal{D}_{\delta_k} \qnl_{\delta_k}(x) \psi(x)\dx \bigg|
      \\
      &=
      \bigg| \int_{\O_{\delta_k}} \nabla\psi(x)\cdot R_{\delta_k}\qnl_{\delta_k}(x)\dx
      -  \int_{\O_{\delta_k}}\int_{\O_{\delta_k}} \ddot{\mathcal{G}}_{\delta_k} \psi(x,x') \qnl_{\delta_k}(x,x')\dx'\dx \bigg|
      \\
      &\leq\int_{\O_{\delta_k}}\int_{\O_{\delta_k}} |\nabla\psi(x)\cdot(x-x')-\psi(x)+\psi(x')||\qnl_{\delta_k}(x,x')|\omega_{\delta_k}(x-x')\dx'\dx
    \\&\leq
    \frac{1}{2}\|\nabla^2\psi\|_{L^\infty(\R^n;\R^{n\times n})}
    \int_{\O_{\delta_k}}\int_{\O_{\delta_k}}|\qnl_{\delta_k}(x,x')||x-x'|^2\omega_{\delta_k}(x-x')\dx'\dx
    \\&\leq
    \frac{\delta_k}{2}\|\nabla^2\psi\|_{L^\infty(\R^n;\R^{n\times n})}
    K_{p,n}^{-1/p}\|\qnl_{\delta_k}\|_{L^{q}(\O_{\delta_k}\times\O_{\delta_k})},
    \end{aligned}
  \end{equation*}
  where we have used H{\"o}lder inequality and the fact that \(\Wd \equiv 0\) for \(|x-x'|>\delta_k\).
  By passing to the limit \(\delta_k\to 0\) we arrive at the equality
  \[\int_{\O} \nabla\psi(x)\cdot \hat{\sigma}(x)\dx = -\int_{\O} f(x)\psi(x)\dx,\]
  which concludes the proof.
\end{proof}

We hereby obtained the following one-sided estimate.
\begin{proposition}\label{prop:liminf}
  Let \(\kadm\ni \kappa_{\delta_k} \wto \kappa\in \kadm\), weakly\(^*\) in \(L^\infty(\O)\), as \(\delta_k\to 0\).
  Then \(\hat{\imath}_{\text{loc}}(\kappa) \leq \liminf_{k\to\infty} \hat{\imath}_{\delta_{k}}(\kappa_{\delta_k})\).
\end{proposition}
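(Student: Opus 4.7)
The plan is to transport the sequence of optimal nonlocal fluxes into candidate local fluxes via the flux-recovery operator \(R_{\delta_k}\) from Proposition~\ref{prop:claim0}, extract a weak limit using the uniform a priori bounds of Corollary~\ref{cor:apriori}, identify this limit as a feasible local flux through Proposition~\ref{prop:claim1_new}, and finally establish joint lower semicontinuity of \(\Idloc\) under the coupled weak-\(^*\)/weak convergence via a Young--Fenchel duality argument.

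First, I extract a subsequence (not relabeled) along which \(\hat{\imath}_{\delta_k}(\kappa_{\delta_k})\) converges to \(\liminf_{k\to\infty}\hat{\imath}_{\delta_k}(\kappa_{\delta_k})\), assumed finite as otherwise the conclusion is vacuous. For each such \(k\), let \(\qnl_k^* \in \Q_{\delta_k}(f)\) be the unique optimal nonlocal flux associated with \(\kappa_{\delta_k}\) provided by Theorem~\ref{thm:nonlocal_state_existence} in the setting of~\eqref{eq:ihat1}; by Corollary~\ref{cor:antisym} the flux \(\qnl_k^*\) is antisymmetric, and by Corollary~\ref{cor:apriori} the family \(\{\qnl_k^*\}\) is bounded in \(L^q(\O_{\delta_k}\times\O_{\delta_k})\) uniformly in \(k\). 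Applying Proposition~\ref{prop:claim0} to the antisymmetric \(\qnl_k^*\) and \(\kappa_{\delta_k}\in\kadm\) delivers both a uniform bound for \(R_{\delta_k}\qnl_k^*\) in \(L^q(\O_{\delta_k};\Rn)\) and the estimate
\[
\Idloc(\kappa_{\delta_k}; R_{\delta_k}\qnl_k^*) \leq \hat{I}_{\delta_k}(\kappa_{\delta_k}; \qnl_k^*) = \hat{\imath}_{\delta_k}(\kappa_{\delta_k}).
\]
Extracting a further subsequence I obtain \(R_{\delta_k}\qnl_k^* \wto \hat{\sigma}\) weakly in \(L^q(\O;\Rn)\); Proposition~\ref{prop:claim1_new} then yields \(\hat{\sigma}\in \Q(f)\), and hence the first inequality \(\hat{\imath}_{\text{loc}}(\kappa) \leq \Idloc(\kappa; \hat{\sigma})\).

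The core remaining step is the coupled lower semicontinuity
\[
\Idloc(\kappa; \hat{\sigma}) \leq \liminf_{k\to\infty} \Idloc(\kappa_{\delta_k}; R_{\delta_k}\qnl_k^*),
\]
which is delicate because \(\kappa_{\delta_k}\) converges only weakly-\(^*\), not strongly, in \(L^\infty(\O)\), so standard lower-semicontinuity theorems for normal convex integrands with two weakly converging arguments do not apply directly. My plan is to invoke the Young--Fenchel identity
\[
\tfrac{1}{q}\kappa^{1-q}|\sigma|^q = \sup_{\xi\in\Rn}\Bigl[\sigma\cdot\xi - \tfrac{1}{p}\kappa|\xi|^p\Bigr],
\]
whose pointwise maximizer \(\xi^* = \kappa^{1-q}|\sigma|^{q-2}\sigma\) lies in \(L^p(\O;\Rn)\) whenever \(\kappa\in[\underline{\kappa},\overline{\kappa}]\) and \(\sigma\in L^q(\O;\Rn)\). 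For any test field \(\xi\in C^\infty_c(\O;\Rn)\),
\[
\Idloc(\kappa_{\delta_k}; R_{\delta_k}\qnl_k^*) \geq \int_{\O}\Bigl[R_{\delta_k}\qnl_k^*(x)\cdot \xi(x) - \tfrac{1}{p}\kappa_{\delta_k}(x)|\xi(x)|^p\Bigr]\dx,
\]
and the right-hand side converges to \(\int_{\O}[\hat{\sigma}\cdot\xi - \tfrac{1}{p}\kappa|\xi|^p]\dx\) as \(k\to\infty\), pairing the weak \(L^q\) convergence of the fluxes with \(\xi\in L^p(\O;\Rn)\) and the weak-\(^*\) \(L^\infty\) convergence of the coefficients with \(|\xi|^p\in L^1(\O)\). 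Taking the supremum over \(\xi\in C^\infty_c(\O;\Rn)\), which is dense in \(L^p(\O;\Rn)\), the left-hand side recovers exactly the Legendre representation \(\Idloc(\kappa; \hat{\sigma})\), finishing this step. Chaining all estimates then yields
\[
\hat{\imath}_{\text{loc}}(\kappa) \leq \Idloc(\kappa; \hat{\sigma}) \leq \liminf_{k\to\infty}\Idloc(\kappa_{\delta_k}; R_{\delta_k}\qnl_k^*) \leq \liminf_{k\to\infty}\hat{\imath}_{\delta_k}(\kappa_{\delta_k}),
\]
which is the claimed inequality. The main obstacle is the joint weak-\(^*\)/weak lower semicontinuity, circumvented precisely by the Young-duality trick that expresses the nonlinear coupling as a supremum of jointly affine functionals.
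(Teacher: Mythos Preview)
Your proof is correct and shares the same overall skeleton as the paper's: take the optimal nonlocal fluxes, use their antisymmetry and the uniform a priori bounds, push them through \(R_{\delta_k}\), extract a weak \(L^q\)-limit \(\hat{\sigma}\in\Q(f)\), and then close with a joint lower-semicontinuity estimate for \(\Idloc\). The genuine difference lies in that last step. The paper argues by contradiction and applies Mazur's lemma to the pairs \((\kappa_{\delta_k},R_{\delta_k}\qnl_{\delta_k})\), obtaining strongly (hence a.e.) convergent convex combinations, and then combines the joint convexity of \((\kappa,\sigma)\mapsto \kappa^{1-q}|\sigma|^q\) with Fatou's lemma; the contradiction framework is what lets them absorb the convex combinations, since every term in the combination is bounded above by \(\hat{\imath}_{\text{loc}}(\kappa)-\epsilon\). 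Your Young--Fenchel argument instead writes \(\tfrac{1}{q}\kappa^{1-q}|\sigma|^q=\sup_{\xi}\bigl[\sigma\cdot\xi-\tfrac{1}{p}\kappa|\xi|^p\bigr]\), which for each fixed \(\xi\) is \emph{affine} in \(\sigma\) and \emph{linear} in \(\kappa\); this is exactly what makes the pairing with weak \(L^q\)-convergence of the fluxes and weak\(^*\) \(L^\infty\)-convergence of the coefficients pass to the limit simultaneously, without Mazur, Fatou, or a contradiction setup. Your route is shorter and exploits the specific structure of the integrand; the paper's Mazur/Fatou route is more general-purpose and would apply verbatim to any nonnegative jointly convex integrand in \((\kappa,\sigma)\).
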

\begin{proof}
  Suppose that the claim is false.  Then there is \(\epsilon >0\) and a sequence of indices \(k'\) such that \(\hat{\imath}_{\delta_{k'}}(\kappa_{\delta_{k'}})\leq \hat{\imath}_{\text{loc}}(\kappa)-\epsilon\), for all \(k'\).

  Let \(\qnl_{\delta_k} = \argmin_{\qnl\in\Q_{\delta_k}(f)}\hat{I}_{\delta_k}(\ddot{\kappa}_{\delta_k},\qnl)\), cf.~Theorem~\ref{thm:nonlocal_state_existence} which establishes its existence and uniqueness.
  Note that \(\qnl_{\delta_k} \in L^q_a(\O_{\delta_k}\times\O_{\delta_k})\), owing to Corollary~\ref{cor:antisym}.
  Furthermore, the sequence \(\|\qnl_{\delta_k}\|_{\Q_{\delta_k}}\) is bounded, see the a priori estimate given in Proposition~\ref{prop:quotientestimate}.
  Consequently, the sequence \(R_{\delta_k}\qnl_{\delta_k}\) is bounded in \(L^q(\O;\Rn)\), see Proposition~\ref{prop:claim0}.
  Since \(L^q(\O;\Rn)\), \(1<q<\infty\) is reflexive, every bounded sequence contains a weakly convergent subsequence, see \cite[Theorem 3.18]{brezis2010functional}.

  We now extract a further subsequence of indices \(k''\) out of \(k'\), such that
  \(R_{\delta_{k''}}\qnl_{\delta_{k''}} \wto \hat{\sigma}\), as \(k''\to \infty\), weakly in \(L^q(\O;\Rn)\).
  According to Proposition~\ref{prop:claim1_new} we have that \(\hat{\sigma}\in\Q(f)\), and consequently
  \[
    \hat{\imath}_{\text{loc}}(\kappa) = \inf_{\sigma\in\Q(f)} \Idloc(\kappa; \sigma)
    \leq \Idloc(\kappa; \hat{\sigma}).
  \]
  We now proceed as in Theorem~\ref{thm:existence_nldesign}.
  We note that \(\kappa_{\delta_k}\wto\kappa\) also in \(L^s(\O)\), \(1<s<\infty\).
  We utilize Mazur's lemma (see for example~\cite[Corollary 3.8]{brezis2010functional}) and construct to a sequence of convex combinations of \((\kappa_{\delta_{k''}},R_{\delta_{k''}}\qnl_{\delta_{k''}})\) which converges strongly 
  towards \((\kappa, \hat{\sigma})\).
  We switch to a further subsequence (not relabeled), which converges pointwise, and utilize the convexity and the nonnegatity of the integrand \([\underline{\kappa},\overline{\kappa}]\times \R \ni (\kappa,\sigma) \mapsto \kappa^{1-q}|\sigma|^q \in \R_{+}\) defining \(\Idloc\) and Fatou's lemma (see for example~\cite[Lemma 4.1]{brezis2010functional}).
  All this leads to the inequality
  \[
    \Idloc(\kappa;\hat{\sigma}) \leq \liminf_{k''\to\infty}
    \sum_{m=k''}^{M(k'')}\lambda_m(k'')  \Idloc(\kappa_{\delta_{m}},R_{\delta_{m}}\qnl_{\delta_{m}})
  \]
  for some nonnegative real numbers \(\lambda_{k''}(k''), \dots, \lambda_{M(k'')}(k'')\) adding up to one.
  It remains to utilize Proposition~\ref{prop:claim0} to arrive at the contradiction:
  \[
    \sum_{m=k''}^{M(k'')}\lambda_m(k'')  \Idloc(\kappa_{\delta_{m}},R_{\delta_{m}}\qnl_{\delta_{m}})
    \leq 
    \sum_{m=k''}^{M(k'')}\lambda_m(k'') \hat{I}_{\delta_{m}}(\kappa_{\delta_{m}},\qnl_{\delta_{m}}) = \sum_{m=k''}^{M(k'')}\lambda_m(k'') \hat{\imath}_{\delta_{m}}(\kappa_{\delta_{m}}) \leq \hat{\imath}_{\text{loc}}(\kappa)-\epsilon.
  \qedhere\]
\end{proof}

\begin{corollary}\label{thm:nonlocal_approximation_1sided}
  The optimal values of the local~\eqref{eq:local_min} and nonlocal~\eqref{eq:nl_min} problems
  satisfy the inequality
  \(d_{\text{loc}} \leq \liminf_{\delta\to 0} d_{\delta}\).
\end{corollary}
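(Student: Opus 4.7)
The plan is to extract a minimizing sequence for the nonlocal problems, pass to a weakly\(^*\) convergent subsequence of conductivities, and apply Proposition~\ref{prop:liminf}. Specifically, I would choose \(\delta_k \to 0\) along which \(d_{\delta_k} \to \liminf_{\delta\to 0} d_\delta\). By the discussion in Section~\ref{sec:localization} following~\eqref{eq:ihat1}, the infimum defining \(d_{\delta_k}\) is attained at some \(\kappa_{\delta_k}^{*} \in \kadm\), i.e.\ \(\hat{\imath}_{\delta_k}(\kappa_{\delta_k}^{*}) = d_{\delta_k}\).

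Next, since \(\kadm\) is convex and weakly\(^*\) compact in \(L^{\infty}(\O)\), the sequence \(\{\kappa_{\delta_k}^{*}\}\) admits a weakly\(^*\) convergent subsequence (which I will not relabel), whose limit \(\kappa^{*}\) also lies in \(\kadm\).

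Now I would invoke Proposition~\ref{prop:liminf} on this subsequence to get
\[
\hat{\imath}_{\text{loc}}(\kappa^{*}) \leq \liminf_{k\to\infty} \hat{\imath}_{\delta_k}(\kappa_{\delta_k}^{*}) = \liminf_{k\to\infty} d_{\delta_k} = \liminf_{\delta\to 0} d_\delta.
\]
Since \(\kappa^{*} \in \kadm\), the definition of \(d_{\text{loc}}\) in~\eqref{eq:local_min} yields \(d_{\text{loc}} \leq \hat{\imath}_{\text{loc}}(\kappa^{*})\), and combining the two inequalities gives the claim.

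The proof is essentially a packaging exercise: Proposition~\ref{prop:liminf} already contains all the hard work (construction of the flux recovery operator \(R_\delta\), the energy inequality \(\Idloc(\kappa;R_\delta\qnl)\leq \hat{I}_\delta(\kappa;\qnl)\), the Mazur/Fatou argument, and verification that weak limits of \(R_{\delta_k}\qnl_{\delta_k}\) lie in \(\Q(f)\)). The only non-trivial point to verify is that \(d_{\delta_k}\) is indeed attained in \(\kadm\); this is guaranteed by the convex, coercive structure of \(\hat{I}_\delta(\kappa;\qnl)\) in \(\qnl\) together with the weak\(^*\) compactness of \(\kadm\), exactly as in Theorem~\ref{thm:existence_nldesign}. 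I therefore do not anticipate a genuine obstacle at this stage.
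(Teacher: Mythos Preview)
Your proposal is correct and follows essentially the same approach as the paper's own proof: take optimal conductivities \(\kappa_{\delta}^*\in\kadm\), use weak\(^*\) sequential compactness of \(\kadm\) to extract a convergent subsequence, and apply Proposition~\ref{prop:liminf}. Your write-up is in fact slightly more explicit than the paper's in that you first pass to a sequence \(\delta_k\) realizing \(\liminf_{\delta\to 0} d_\delta\) before extracting the weakly\(^*\) convergent subsequence, which makes the final chain of inequalities cleaner.
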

\begin{proof}
  Consider the family of optimal solutions \(\kappa^*_{\delta}\in\kadm\) to~\eqref{eq:nl_min}.
  From any subsequence of \(\{\kappa^*_{\delta}\}\) we can extract a weakly\(^*\) convergent subsequence, since \(\kadm\) is weakly\(^*\) sequentially compact in \(L^\infty(\O)\).
  To each such subsequence we can apply Proposition~\ref{prop:liminf}.
\end{proof}

\subsection{A lower estimate for the local problem}\label{subsec:gammaconv_limsup}

Let us define the following nonlinear operator, which maps vector-valued functions \(\sigma: \Rn\to\Rn\) to two-point quantities \(\ddot{F}_{\delta}\sigma: \Rn\times\Rn\to\R\):
\begin{equation}\label{eq:Fddot}
  \begin{aligned}
  \ddot{F}_{\delta}\sigma(x,x') = 
  \frac{1}{2}\{
    &|\sigma(x)|^{2-p}|\sigma(x)\cdot (x-x')|^{p-2}[\sigma(x)\cdot (x-x')]
    \\+
    &|\sigma(x')|^{2-p}|\sigma(x')\cdot (x-x')|^{p-2}[\sigma(x')\cdot (x-x')]
  \} \Wd^{p-1}(x-x').
  \end{aligned}
\end{equation}
Note that in the quadratic case \(p=2\), \(\ddot{F}_{\delta}\) reduces to the Hilbert space adjoint operator for \(R_{\delta}\), when the latter is restricted to \(L^2_a(\Od\times\Od)\), see~\cite[Section~6]{evgrafov2021dual}.

\subsubsection{An informal derivation of \(\ddot{F}_\delta\)}\label{sec:deriv_ddotF}

In this subsection we present an informal derivation of~\eqref{eq:Fddot}.
Throughout the derivation we shall assume that the conductivity field \(\kappa(x)\equiv \kappa\) is constant, and consequently also \(\knl \equiv \kappa\).
Let \(\qnl \in L^q(\Od\times\Od)\) be a two-point flux corresponding to a sufficiently smooth temperature field, for example \(u \in C^2_c(\O)\).
We can then state the identity
\begin{equation}\label{eq:a1}
    \begin{aligned}
        \qnl(x,x') &= -\knl(x,x')|\grad u(x,x')|^{p-2}\grad u(x,x')\\
        &= -\frac{1}{2}\kappa|\grad u(x,x')|^{p-2}\grad u(x,x')
        +\frac{1}{2}\kappa|\grad u(x',x)|^{p-2}\grad u(x',x),
    \end{aligned}
\end{equation}
where the first equality is obtained by isolating \(\qnl\) from the first of the optimality conditions~\eqref{eq:kkt}, and the second one is owing to the symmetry of \(\knl\) and anti-symmetry of \(\grad\).
We can now make the following finite difference approximations stemming directly from the definition of \(\grad\), see~\eqref{eq:nlgrad}:
\begin{equation}\label{eq:a2}
    \begin{aligned}
        \grad u(x,x')&\approx \nabla u(x)\cdot (x-x')\Wd(x-x'),\qquad\text{and}\\
        \grad u(x',x)&\approx \nabla u(x')\cdot (x'-x)\Wd(x-x'),
    \end{aligned}
\end{equation}
where we have utilized the radial symmetry of \(\Wd\).
Finally, assuming that \(u\) is also the temperature field corresponding to the ``local'' vector flux \(\sigma\) for \(p\)-Laplacian, we get the formulae
\begin{equation}\label{eq:a3}
    \begin{aligned}
        \nabla u(x) &= -\kappa^{1-q}|\sigma(x)|^{q-2}\sigma(x),\qquad\text{and}\\
        \nabla u(x') &= -\kappa^{1-q}|\sigma(x')|^{q-2}\sigma(x').
    \end{aligned}
\end{equation}
It remains to substitute~\eqref{eq:a3} into~\eqref{eq:a2}, and the result subsequently into~\eqref{eq:a1} and note that
\(\kappa\kappa^{(p-1)(1-q)} = 1\) and \(|\sigma(x)|^{(p-1)(q-2)} = |\sigma(x)|^{2-p}\)
in order to arrive at~\eqref{eq:Fddot}.

\subsubsection{Rigorous analysis of \(\ddot{F}_\delta\)}
We will now show, that \(\ddot{F}_{\delta}\) possesses several properties, which are complementary to those enjoyed by \(R_{\delta}\).
Namely, we will show the following:
\begin{itemize}
  \item Antisymmetry: \(\ddot{F}_{\delta}\sigma \in L^q_a(\Od\times\Od)\), for each \(\sigma \in L^q(\O;\Rn)\).
  \item Norm-stability:
  \(
  \limsup_{\delta\searrow 0}
  \Id_{\delta}(\kappa;\ddot{F}_{\delta}\sigma)\leq \Idloc(\kappa;\sigma),
  \)
  for all sufficiently smooth \(\sigma\).
  \item Consistency:  \(\ddot{F}_{\delta}\sigma\in \Qd\) and
    \(\lim_{\delta\searrow 0}\|\diver \ddot{F}_{\delta}\sigma - \Div \sigma\|_{L^q(\O)} = 0,\)
    for all sufficiently smooth \(\sigma\).
\end{itemize}

Antisymmetry follows directly from the definition of \(\ddot{F}\), since
\begin{equation}\label{eq:antisym}
  \ddot{F}_{\delta}\sigma(x,x')=-\ddot{F}_{\delta}\sigma(x',x),
  \qquad \forall (x,x')\in \Rn\times\Rn.
\end{equation}
Furthermore, for each \(\sigma \in L^q(\O;\Rn)\) we can apply H{\"o}lder's inequality to show that \(\ddot{F}_{\delta}\sigma\in L^q(\Od\times\Od)\):
\[
  \begin{aligned}
    \|\ddot{F}_{\delta}\sigma\|_{L^q(\Od\times\Od)}
    \leq \left[\int_{\O} |\sigma(x)|^q \int_{\Od} |x-x'|^p \Wd^p(x-x')\dx'\dx \right]^{1/q} \leq K_{p,n}^{-1/q} \|\sigma\|_{L^q(\Od;\Rn)}.
  \end{aligned}
\]

We now proceed to establishing norm-stability, which is only marginally more difficult to obtain.
\begin{proposition}\label{prop:rstar2}
  Assume that \(\sigma\in C^1_c(\Rn;\Rn)\) and \(\kappa \in \kadmd\).
  Then
  \[
  \limsup_{\delta\searrow 0}
  \Id_{\delta}(\kappa;\ddot{F}_{\delta}\sigma)\leq \Idloc(\kappa;\sigma).
  \]
  \end{proposition}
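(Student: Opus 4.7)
The plan is to reduce the claim to two BBM-type averaging limits via three steps: (i) invoke antisymmetry of \(\ddot{F}_{\delta}\sigma\) from~\eqref{eq:antisym}, (ii) bound \(|\ddot{F}_{\delta}\sigma|^q\) by Jensen's inequality applied to the two-term average defining~\eqref{eq:Fddot}, and (iii) use Fubini/variable-renaming to split the resulting upper bound into a diagonal and a cross contribution. Concretely, because \(\ddot{F}_{\delta}\sigma\in L^q_a(\Od\times\Od)\) and \(\knl^{1-q}(x,x')=\tfrac{1}{2}[\kappa^{1-q}(x)+\kappa^{1-q}(x')]\) under~\eqref{eq:kadmd1}, the identity recorded just above Subsection~\ref{subsec:gammaconv_lininf} gives
\[
q\Id_{\delta}(\kappa;\ddot{F}_{\delta}\sigma) = \int_{\Od}\kappa^{1-q}(x)\int_{\Od}|\ddot{F}_{\delta}\sigma(x,x')|^q\dx'\dx.
\]
Jensen's inequality, combined with the conjugacy identities \((p-1)q=p\) and \((2-p)q=q-p\), then yields
\[
|\ddot{F}_{\delta}\sigma(x,x')|^q \leq \tfrac{1}{2}\bigl\{|\sigma(x)|^{q-p}|\sigma(x)\cdot(x-x')|^p + |\sigma(x')|^{q-p}|\sigma(x')\cdot(x-x')|^p\bigr\}\Wd^p(x-x').
\]

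Substituting this bound and renaming \(x\leftrightarrow x'\) in the cross piece (using the radiality of \(\Wd\)) produces \(q\Id_{\delta}(\kappa;\ddot{F}_{\delta}\sigma)\leq \tfrac{1}{2}(A_{\delta}+B_{\delta})\), where \(A_{\delta}\) carries the factor \(\kappa^{1-q}(x)\) and \(B_{\delta}\) the factor \(\kappa^{1-q}(x')\). The diagonal term \(A_{\delta}\) is immediate: the estimate~\eqref{eq:BBM0} from the proof of Proposition~\ref{prop:claim0} bounds the inner integral \(\int_{\Od}|\sigma(x)\cdot(x-x')|^p\Wd^p(x-x')\dx'\) by \(|\sigma(x)|^p\), giving \(A_{\delta}\leq \int_{\Od}\kappa^{1-q}|\sigma|^q\dx\), which converges to \(\int_{\O}\kappa^{1-q}|\sigma|^q\dx\) as \(\delta\searrow 0\) because \(|\Od\setminus\O|\to 0\) while \(\kappa^{1-q}|\sigma|^q\) is bounded and compactly supported.

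The main obstacle is the cross term \(B_{\delta}\), in which the merely \(L^\infty\) coefficient appears at a translated argument. Substituting \(z=x-x'\), applying Fubini, and switching to polar coordinates writes \(B_{\delta}\) as \(\int_0^{\delta} r^{n+p-1}\Wd^p(r)\int_{\Sn}\int h_s(x)\kappa^{1-q}(x-rs)\,\mathrm{d}x\,\mathrm{d}s\,\mathrm{d}r\), where \(h_s(x):=|\sigma(x)|^{q-p}|\sigma(x)\cdot s|^p\leq |\sigma(x)|^q\) lies in \(L^1(\Rn)\) with compact support. The map \(s\mapsto h_s\) from \(\Sn\) into \(L^1(\Rn)\) is continuous by dominated convergence and hence has compact image, so the uniform continuity of \(L^1\)-translation on compact families gives
\[
\sup_{s\in\Sn,\,|z|\leq\delta}\left|\int h_s(x)\bigl[\kappa^{1-q}(x-z)-\kappa^{1-q}(x)\bigr]\mathrm{d}x\right|\xrightarrow[\delta\searrow 0]{} 0.
\]
Combining this with the normalization identities \(\int_0^{\delta} r^{n+p-1}\Wd^p(r)\,\mathrm{d}r = K_{p,n}^{-1}/|\Sn|\) and \(\int_{\Sn}|\sigma(x)\cdot s|^p\,\mathrm{d}s = K_{p,n}|\Sn||\sigma(x)|^p\) (both extracted from~\eqref{eq:normalization}) then yields \(B_{\delta}\to\int_{\O}\kappa^{1-q}|\sigma|^q\dx\). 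Assembling the \(A_\delta\) and \(B_\delta\) limits gives \(\limsup_{\delta\searrow 0} q\Id_\delta(\kappa;\ddot{F}_{\delta}\sigma)\leq \int_{\O}\kappa^{1-q}|\sigma|^q\dx = q\Idloc(\kappa;\sigma)\), as required; the delicate step is passing the rough coefficient \(\kappa^{1-q}\) through the direction-weighted BBM kernel in \(B_\delta\), for which pointwise Lebesgue differentiation is inadequate but the \(L^1\)-compactness argument above suffices.
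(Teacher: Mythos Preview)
Your proof is correct and takes a genuinely different route from the paper on the cross term. Both arguments begin identically: antisymmetry~\eqref{eq:antisym} plus convexity of \(|\cdot|^q\) (your Jensen step) split the bound into a diagonal piece (\(A_\delta\), the paper's \(E_1\)) handled by~\eqref{eq:BBM0}, and a cross piece (\(B_\delta\), the paper's \(E_2\)). For the cross piece the paper keeps the weight \(\kappa^{1-q}(x)\) fixed and shows the \(\sigma\)-dependent inner integral \(E_2(x)\) converges pointwise to \(|\sigma(x)|^q\) via a first-order Taylor expansion of the integrand \(g(r)=|\sigma(x+rs)|^{q-p}|\sigma(x+rs)\cdot s|^p\); this uses the \(C^1\) hypothesis on \(\sigma\) and requires a case split on whether \(\sigma(x)=0\), after which dominated convergence against the rough weight \(\kappa^{1-q}\) finishes. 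You instead swap \(x\leftrightarrow x'\) so that the rough coefficient \(\kappa^{1-q}\) sits at the translated argument while the smooth factor \(h_s(x)=|\sigma(x)|^{q-p}|\sigma(x)\cdot s|^p\) stays put, and then invoke uniform \(L^1\)-translation continuity on the compact family \(\{h_s:s\in\Sn\}\). Your argument is shorter, avoids the pointwise case analysis entirely, and in fact uses nothing about \(\sigma\) beyond \(|\sigma|^q\in L^1\); the \(C^1_c\) hypothesis is not needed for this proposition under your approach. The paper's route, by contrast, yields a stronger intermediate statement (pointwise convergence of the inner integral) and is methodologically aligned with the pointwise analysis of \(J_{p,\delta,\epsilon}[\sigma]\) in Section~\ref{sec:Fconsist}.

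One minor imprecision worth tightening: your unlabelled integral \(\int h_s(x)\kappa^{1-q}(x-rs)\dx\) is over a \(\delta\)-dependent domain (a subset of \(\O_\delta\)), not \(\Rn\), so to conclude \(B_\delta\to\int_\O\kappa^{1-q}|\sigma|^q\) rather than \(\int_{\Rn}\) you should split off the collar contribution and note \(\int_{\Gamma_\delta}\kappa^{1-q}(\cdot)\,h_s\leq\underline{\kappa}^{1-q}\int_{\Gamma_\delta}|\sigma|^q\to 0\), exactly as in the paper's last display; the translation argument then runs cleanly with \(g_s=\mathbb{1}_\O h_s\), which is still a compact family in \(L^1\).
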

  \begin{proof}
  We proceed with the direct computation. Let us put \(z=x'-x\). Then we can write:
  \[\begin{aligned}
    \Id_{\delta}(\kappa;\ddot{F}_{\delta}q)&=
    \int_{\Od}\kappa^{1-q}(x)\int_{\Od} |\ddot{F}_{\delta}\sigma(x,x')|^q \dx'\dx
  \\&\leq
  \frac{1}{2}\int_{\Od} \kappa^{1-q}(x)\int_{\Od}
  |\sigma(x)|^{q-p}|\sigma(x)\cdot (x-x')|^{p}\Wd^p(x-x')\dx'\dx
  \\&+
  \frac{1}{2}\int_{\Od} \kappa^{1-q}(x)\int_{\Od}
  |\sigma(x')|^{q-p}|\sigma(x')\cdot (x-x')|^{p}\Wd^p(x-x')\dx'\dx
  \\&=
  \frac{1}{2}\int_{\Od} \kappa^{1-q}(x)\underbrace{\int_{B(0,\delta)\cap \Od}
  |\sigma(x)|^q\left|\frac{\sigma(x)}{|\sigma(x)|}\cdot \frac{z}{|z|}\right|^{p}|z|^p\Wd^p(z)\,\mathrm{d}z}_{=E_1(x)}\dx
  \\&+
  \frac{1}{2}\int_{\Od} \kappa^{1-q}(x)\underbrace{\int_{B(0,\delta)\cap \Od}
  |\sigma(x+z)|^{q-p}\left|\sigma(x+z)\cdot \frac{z}{|z|}\right|^{p}|z|^p\Wd^p(z)\,\mathrm{d}z}_{=E_2(x)}\dx, 
  \end{aligned}
  \]
  where we have utilized the fact that \(\Wd\) is radial, the anti-symmetry of \(\ddot{F}_{\delta}\) and the convexity of \(|\cdot|^q\).
  Arguing as in~\eqref{eq:BBM0} we establish that \(E_1(x)=|\sigma(x)|^q\), for each \(x\in\O\).
  Our strategy now is to show that \(E_2(x)\) is close to \(E_1(x)\), in the sense that Lebesgue's dominated convergence theorem is applicable as \(\delta\searrow 0\).
  Since we know the value of \(E_1(x)\) only for \(x\in\O\), we will also need to show that the integrals of \(E_1\) and \(E_2\) over \(\Gamma_{\delta}\) are small.

  To this end we first state the upper bound
  \begin{equation}\label{eq:ubound_I2}\begin{aligned}
    |E_2(x)| &\leq \int_{B(0,\delta)}
    |\sigma(x+z)|^{q}\left|\frac{\sigma(x+z)}{|\sigma(x+z)|}\cdot \frac{z}{|z|}\right|^{p}|z|^p\Wd^p(z)\,\mathrm{d}z
    \\&\leq \|\sigma\|_{L^\infty(\Rn;\Rn)}^q\int_{B(0,\delta)}|z|^p\Wd^p(z)\,\mathrm{d}z=
    \|\sigma\|_{L^\infty(\Rn;\Rn)}^q K_{p,n}^{-1},
    \qquad \forall x\in \Rn,
  \end{aligned}
\end{equation}
  where we utilized H{\"o}lder's inequality.
  We now  consider the integrand
  \(g(r)=|\sigma(x+rs)|^{q-p}\left|\sigma(x+rs)\cdot s\right|^{p}\),
  where \(s\in \Sn\) and \(r\in \R\) are arbitrary.
  If \(\sigma(x)\neq 0\), the same holds over \(B(x,\delta)\), for small enough \(\delta>0\).  We can then directly compute
  \[\begin{aligned}
    g'(r) &= (q-p)|\sigma(x+rs)|^{q-p-1}\frac{\sigma(x+rs)^T \nabla \sigma(x+rs) s}{|\sigma(x+rs)|} \left|\sigma(x+rs)\cdot s\right|^{p}
    \\&+ p|\sigma(x+rs)|^{q-p}\left|\sigma(x+rs)\cdot s\right|^{p-1}
    s^T\nabla \sigma(x+rs) s,
  \end{aligned}\]
  for \(r \in (-\delta,\delta)\).
  Consequently, utilizing Cauchy--Bunyakovsky--Schwarz inequality we can estimate
  \[
    |g'(r)| \leq (|q-p|+p)\|\sigma\|_{L^\infty(\Rn;\Rn)}^{q-1} \|\nabla \sigma\|_{L^\infty(\Rn;\R^{n\times n})},
  \]
  and therefore owing to Taylor's theorem we have the inequality
  \[
    |g(r)-g(0)| \leq (|q-p|+p)\|\sigma\|_{L^\infty(\Rn;\Rn)}^{q-1} \|\nabla \sigma\|_{L^\infty(\Rn;\R^{n\times n})}|r|.
  \]
  If, on the other hand, \(\sigma(x)= 0\), then also \(g(0)=0\) and utilizing Cauchy--Bunyakovsky--Schwarz inequality and Taylor's theorem we can write
  \[
    |g(r)-g(0)| \leq |\sigma(x+rs)|^{q}
    \leq \|\nabla \sigma\|_{L^\infty(\Rn;\R^{n\times n})}^q |r|^q.
  \]
  Regardless of whether \(\sigma(x)=0\) or not, and having in mind that \(q>1\), for small \(|z|\) we can write the estimate
  \[
    |g(|z|)-g(0)| \leq C|z|,
  \]
  with the proportionality constant depending on \(p\), \(q\), \(\|\sigma\|_{L^\infty(\Rn;\Rn)}\) and \(\|\nabla \sigma\|_{L^\infty(\Rn;\R^{n\times n})}\).
  We now use \(s=z/|z|\) and \(r=|z|\) in the definition of \(g\) to write the following:
  \[
      \lim_{\delta\searrow 0}
      |E_2(x)-E_1(x)| \leq \lim_{\delta\searrow 0} \int_{B(0,\delta)}
      C|z|^{p+1}\Wd^p(z)\,\mathrm{d}z \leq \lim_{\delta\searrow 0} CK_{p,n}^{-1} \delta = 0,
  \]
 for each \(x\in \Od\).
 Since we have the upper bound~\eqref{eq:ubound_I2}, Lebesgue's dominated convergence theorem applies, implying the inequality
  \[\begin{aligned}
    \limsup_{\delta\searrow 0}
    \int_{\Od}\kappa^{1-q}(x)\int_{\Od} |\ddot{F}_{\delta}\sigma(x,x')|^q\dx'\dx
    &\leq \int_{\O} \kappa^{1-q}(x)|\sigma(x)|^q \dx \\&+ 
    \limsup_{\delta\searrow 0}\int_{\Gamma_{\delta}} \kappa^{1-q}(x)\int_{\Od}|\ddot{F}_{\delta}\sigma(x,x')|^q \dx'\dx,
  \end{aligned}\]
  with the last limit being \(0\) owing to the continuity of Lebesgue's integral.
  In summary, we can conclude that
  \[
  \limsup_{\delta\searrow 0}
  \Id_{\delta}(\kappa;\ddot{F}_{\delta}\sigma)\leq \Idloc(\kappa;\sigma).\qedhere
  \]
\end{proof}

We postpone the analysis of consistency of \(\ddot{F}_\delta\) to Section~\ref{sec:Fconsist}.
For now, we only state the final result without a proof.
\begin{proposition}\label{prop:Fconsist}
  Assume that \(\sigma\in C^2_c(\Rn;\Rn)\).
  Then \(\ddot{F}_{\delta}\sigma \in \Qd\),  and \(\lim_{\delta\searrow 0} \|\diver\ddot{F}_{\delta}\sigma-\Div\sigma\|_{L^q(\O)} = 0\).
\end{proposition}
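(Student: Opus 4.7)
The plan is to derive an explicit pointwise formula for \(\diver\ddot{F}_{\delta}\sigma\) on \(\O\), establish pointwise convergence to \(\Div\sigma\) almost everywhere together with a uniform bound, and invoke Lebesgue's dominated convergence on the bounded set \(\O\). Introduce \(h(y,z) := |y|^{2-p}|y\cdot z|^{p-2}(y\cdot z)\), which is odd in \(z\), positively \(1\)-homogeneous in \(y\), and satisfies \(|h(y,z)|\le |y||z|^{p-1}\). Using the antisymmetry of \(\ddot{F}_{\delta}\sigma\) (see~\eqref{eq:antisym}) together with Proposition~\ref{prop:diver}, point~\ref{diver:formula}, one has \(\diver \ddot{F}_{\delta}\sigma(x) = -2\int_{\Od}\ddot{F}_{\delta}\sigma(x,x')\Wd(x-x')\dx'\). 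Substituting the definition~\eqref{eq:Fddot}, changing variables to \(z=x'-x\), and using oddness of \(h(y,\cdot)\) combined with the symmetry of \(B(0,\delta)\) (valid for \(x\in\O\) with \(\delta\) small enough that \(B(x,\delta)\subset\Od\)) to annihilate the \(h(\sigma(x),z)\) contribution, we arrive at the key identity
\[
  \diver\ddot{F}_{\delta}\sigma(x) = \int_{B(0,\delta)} \bigl[h(\sigma(x+z),z) - h(\sigma(x),z)\bigr]\Wd^p(z)\,\mathrm{d}z, \qquad x \in \O.
\]
Membership \(\ddot{F}_{\delta}\sigma\in\Qd\) and a uniform bound \(|\diver\ddot{F}_{\delta}\sigma(x)|\le C\|\sigma\|_{C^1}K_{p,n}^{-1}\) follow by invoking H{\"o}lder-type continuity of \(h\) in its first argument together with the homogeneity structure \(h(y,z)=|y|g(y/|y|,z)\) and the normalization~\eqref{eq:normalization}.

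For the pointwise limit at \(x\in\O\) with \(\sigma(x)\ne 0\), observe that \(y\mapsto h(y,z)\) is \(C^1\) in a neighborhood of \(\sigma(x)\) for a.e.\ \(z\), with
\[
  D_yh(y,z)v = (p-1)|y|^{2-p}|y\cdot z|^{p-2}(z\cdot v) + (2-p)|y|^{-p}|y\cdot z|^{p-2}(y\cdot z)(y\cdot v).
\]
Taylor expanding \(\sigma(x+z)=\sigma(x)+\nabla\sigma(x)z+O(|z|^2)\) gives
\[
  h(\sigma(x+z),z) - h(\sigma(x),z) = D_yh(\sigma(x),z)[\nabla\sigma(x)z] + E(x,z)
\]
with \(|E(x,z)|\le C(x)|z|^{p+1}\), so the \(E\) contribution to the integral is \(O(\delta)\). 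Passing to spherical coordinates \(z=rs\), the radial part factors as \(\int_0^\delta r^{n-1+p}\Wd^p(r)\,\mathrm{d}r = |\Sn|^{-1}K_{p,n}^{-1}\), and the remaining angular integral (with \(A=\nabla\sigma(x)\), \(e=\sigma(x)/|\sigma(x)|\)) equals \((2-p)\int_{\Sn}(e\cdot As)|e\cdot s|^{p-2}(e\cdot s)\,\mathrm{d}s + (p-1)\int_{\Sn}(s\cdot As)|e\cdot s|^{p-2}\,\mathrm{d}s\). These two sphere integrals are evaluated by differentiating the identity \(\int_{\Sn}|y\cdot s|^p\,\mathrm{d}s=|\Sn|K_{p,n}|y|^p\) once and twice in \(y\), yielding
\[
  \int_{\Sn}|e\cdot s|^{p-2}(e\cdot s)s\,\mathrm{d}s = |\Sn|K_{p,n}e, \qquad
  \int_{\Sn}|e\cdot s|^{p-2}s_is_j\,\mathrm{d}s = \frac{|\Sn|K_{p,n}}{p-1}\bigl[(p-2)e_ie_j+\delta_{ij}\bigr].
\]
Substituting, the angular integral reduces to \(|\Sn|K_{p,n}\bigl\{(2-p)(e^TAe)+(p-2)(e^TAe)+\trace(A)\bigr\}=|\Sn|K_{p,n}\trace(A)\). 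Multiplication by \(|\Sn|^{-1}K_{p,n}^{-1}\) leaves \(\trace(\nabla\sigma(x))=\Div\sigma(x)\) in the limit.

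At interior points of \(\{\sigma=0\}\) both sides vanish identically for small \(\delta\); and \(\{x\in\O:\sigma(x)=0,\,\nabla\sigma(x)\ne 0\}\) has Lebesgue measure zero for \(\sigma\in C^2\) by the implicit function theorem. Therefore \(\diver\ddot{F}_{\delta}\sigma\to\Div\sigma\) almost everywhere on \(\O\), and together with the uniform bound from the first paragraph, Lebesgue's dominated convergence theorem gives the claimed convergence in \(L^q(\O)\). The principal obstacle is the pair of sphere-integral identities and the subsequent algebraic cancellation producing exactly \(\trace(\nabla\sigma(x))\); this is what makes the specific form of \(\ddot{F}_\delta\) in~\eqref{eq:Fddot} the correct one. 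A secondary technical issue is the mild loss of smoothness of \(h(\cdot,z)\) along \(\{y\cdot z=0\}\) when \(p<2\), handled by the integrability of \(|e\cdot s|^{p-2}\) over \(\Sn\) (valid for \(p>1\)) and the measure-zero nature of the exceptional set.
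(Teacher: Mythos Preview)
Your overall strategy coincides with the paper's: derive a pointwise formula for \(\diver\ddot{F}_\delta\sigma\), Taylor expand, identify the leading term via sphere integrals, and apply dominated convergence. Your derivation of the sphere identities by differentiating \(\int_{\Sn}|y\cdot s|^p\,\mathrm{d}s=|\Sn|K_{p,n}|y|^p\) once and twice in \(y\) is a pleasant alternative to the paper's direct computation in spherical coordinates (Lemma~\ref{lem:crazytrace}); it yields the same cancellation with less bookkeeping.

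There are, however, two genuine gaps. First, a minor one: the invocation of Proposition~\ref{prop:diver}, point~\ref{diver:formula}, is not justified as written, since that formula is proved for \(C^{0,1}\) two-point functions, whereas \(\ddot{F}_\delta\sigma\) carries the factor \(\Wd^{p-1}(x-x')\), which is singular on the diagonal for typical kernels. The paper addresses this via the \(\epsilon\)-cutoff in Proposition~\ref{prop:rstar1}, which is why \(\epsilon\) is carried through all of Section~\ref{sec:Fconsist}.

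The second gap is substantive and concerns the range \(1<p<2\). Your remainder bound \(|E(x,z)|\le C(x)|z|^{p+1}\) and your uniform bound via ``H{\"o}lder-type continuity of \(h\)'' both fail there. Already the leading term \(D_yh(\sigma(x),z)\) contains \(|\sigma(x)\cdot z|^{p-2}\), unbounded along \(\{\sigma(x)\cdot z=0\}\), and the Taylor remainder is worse still; no constant \(C(x)\) independent of \(z\) exists. Pointwise H{\"o}lder continuity of \(y\mapsto h(y,z)\) only yields \(|h(\sigma(x{+}z),z)-h(\sigma(x),z)|\lesssim |z|^{2p-2}+|z|^p\), and for \(p<2\) the function \(|z|^{2p-2}\Wd^p(z)\) need not be integrable near the origin for admissible kernels (take \(\Wd(z)=c|z|^{\alpha}\) with \(\alpha\) close to \(-1-n/p\)). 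The integrability of \(|e\cdot s|^{p-2}\) over \(\Sn\) helps the leading term after angular integration, but it does not control the remainder, nor does it produce a \(\delta\)-uniform \(L^\infty\) bound on \(\diver\ddot{F}_\delta\sigma\). The paper obtains that bound only after angular integration, through the lengthy case analysis of Lemma~\ref{lem:Lip_plt2} showing \(\bigl|\int_{\Sn} j_{p,\sigma,s}(x{+}rs)\,\mathrm{d}s\bigr|\le Lr\); the pointwise limit (Proposition~\ref{prop:ptwise_plt2}) additionally requires splitting \(B(0,\delta)\) into a ``good'' region \(O^+_{\delta,\epsilon}\) where \(|\sigma(x{+}tz)\cdot z|\) is bounded below by a fixed multiple of \(|\sigma(x)\cdot z|\), so that Taylor's theorem applies with controlled constants, and a thin ``bad'' region \(O^-_{\delta,\epsilon}\) whose measure is \(O(\delta^2)\). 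What you label a ``secondary technical issue'' is in fact the bulk of the work in Section~\ref{sec:Fconsist}; your sketch is essentially complete only for \(p\ge 2\).
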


\subsubsection{Application to the lower estimate for the local problem and \(\Gamma\)-convergence}
With these results at our disposal, we can prove the following claim.
\begin{proposition}\label{thm:nonlocal_approximation_2sided}
  Assume that \(\kappa \in \kadmd\) and \(\sigma\in \Q(f)\) are given.
  Then there is a family \((\qnl_{\delta})\), such that \(\qnl_\delta \in \Qd(f)\), and
  \(\limsup_{\delta\searrow 0} \Id_{\delta}(\kappa;\qnl_{\delta}) \leq
  \Idloc(\kappa;\sigma)\).
  \end{proposition}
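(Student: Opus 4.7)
The plan is to first dispatch the smooth case using the operator $\ddot{F}_{\delta}$ introduced in Subsection~\ref{sec:deriv_ddotF}, and then extend to a general $\sigma \in \Q(f)$ by density and diagonalization. The natural candidate $\ddot{F}_{\delta}\sigma$ will satisfy the target energy bound thanks to Proposition~\ref{prop:rstar2}, but it will not lie in $\Qd(f)$ on the nose; a small divergence correction provided by Proposition~\ref{prop:quotientestimate} will be the key ingredient for repairing this defect.

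\textbf{Smooth case.} For $\tilde\sigma \in C^2_c(\R^n;\R^n)$, set $g_\delta := f - \diver \ddot{F}_{\delta}\tilde\sigma$ and take $\qnl_\delta := \ddot{F}_{\delta}\tilde\sigma + \qnl_{\delta, g_\delta}$, where the second summand is supplied by Proposition~\ref{prop:quotientestimate} with source $g_\delta$ in place of $f$. Then $\qnl_\delta \in \Qd(f)$ by construction. Since $\qnl \mapsto [q\Id_\delta(\kappa;\qnl)]^{1/q}$ is a weighted $L^q(\Od\times\Od)$ norm equivalent to the unweighted one in view of $\underline{\kappa} \leq \kappa \leq \overline{\kappa}$, the triangle inequality yields
\[
[q\Id_\delta(\kappa;\qnl_\delta)]^{1/q} \leq [q\Id_\delta(\kappa;\ddot{F}_{\delta}\tilde\sigma)]^{1/q} + \underline{\kappa}^{(1-q)/q}\|\qnl_{\delta, g_\delta}\|_{L^q(\Od\times\Od)}.
\]
Proposition~\ref{prop:quotientestimate} controls the last term by $\underline{\kappa}^{(1-q)/q} C\|g_\delta\|_{L^q(\O)}$, which by Proposition~\ref{prop:Fconsist} converges to $\underline{\kappa}^{(1-q)/q} C\|f - \Div\tilde\sigma\|_{L^q(\O)}$ as $\delta \searrow 0$. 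Together with Proposition~\ref{prop:rstar2} applied to the first term, this gives
\[
\limsup_{\delta \searrow 0}[q\Id_\delta(\kappa;\qnl_\delta)]^{1/q} \leq [q\Idloc(\kappa;\tilde\sigma)]^{1/q} + \underline{\kappa}^{(1-q)/q} C\|f - \Div\tilde\sigma\|_{L^q(\O)}.
\]

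\textbf{General case.} To pass to arbitrary $\sigma \in \Q(f)$, the plan is to invoke the standard density of $C^\infty(\bar\O;\R^n)$ in the graph norm of $\Q$ (valid for Lipschitz domains), followed by a Lipschitz extension, to obtain $\sigma_k \in C^2_c(\R^n;\R^n)$ with $\sigma_k \to \sigma$ in $L^q(\O;\R^n)$ and $\Div\sigma_k \to f$ in $L^q(\O)$. Applying the smooth case to each $\sigma_k$ produces families $\{\qnl_\delta^k\}_\delta \subset \Qd(f)$ whose limsups are bounded in terms of $\Idloc(\kappa;\sigma_k)$ and the residual $\|f - \Div\sigma_k\|_{L^q(\O)}$. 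The continuity of $\Idloc(\kappa;\cdot)$ on $L^q(\O;\R^n)$, which follows from dominated convergence and the pointwise estimate $\big||a|^q - |b|^q\big| \leq q(|a|+|b|)^{q-1}|a-b|$, guarantees that the right-hand side converges to $\Idloc(\kappa;\sigma)$ as $k \to \infty$. A standard diagonal argument (Attouch's lemma) then delivers $\qnl_\delta \in \Qd(f)$ with the required limsup bound.

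\textbf{Main obstacle.} The central difficulty is the gap between the exact divergence constraint required by $\Qd(f)$ and the merely approximate consistency $\diver \ddot{F}_{\delta}\tilde\sigma \to \Div\tilde\sigma$ in $L^q(\O)$ supplied by Proposition~\ref{prop:Fconsist}. Bridging this gap demands absorbing the vanishing residual $g_\delta$ into an extra flux term whose $L^q$-norm is controlled by $\|g_\delta\|_{L^q(\O)}$ with a constant uniform in $\delta$, which is precisely the role of the bounded right inverse of $\diver$ furnished by Proposition~\ref{prop:quotientestimate}.
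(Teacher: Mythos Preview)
Your argument is correct and rests on the same three pillars as the paper's proof: Proposition~\ref{prop:rstar2} for the energy bound on $\ddot{F}_\delta\tilde\sigma$, Proposition~\ref{prop:Fconsist} for the consistency of the nonlocal and local divergences, and Proposition~\ref{prop:quotientestimate} for a $\delta$-uniform correction of the divergence residual. The implementations differ in one respect worth noting. You construct $\qnl_\delta^k = \ddot{F}_\delta\sigma_k + \qnl_{\delta,g_\delta}$ explicitly and then diagonalize over $k$; the paper instead takes $\qnl_\delta$ to be the \emph{optimal} flux in $\Qd(f)$, which is independent of the smooth approximation $\sigma_\epsilon$, and invokes the Lipschitz stability of the optimal value (Corollary~\ref{cor:Lip_stab}, itself a consequence of Proposition~\ref{prop:quotientestimate}) to compare $\Id_\delta(\kappa;\qnl_\delta)$ with $\Id_\delta(\kappa;\ddot{F}_\delta\sigma_\epsilon)$. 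This lets the paper send $\epsilon\to 0$ after $\delta\to 0$ without any diagonal extraction. Your route is more explicitly constructive and bypasses Corollary~\ref{cor:Lip_stab} as a black box; the paper's is slightly more streamlined.
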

  \begin{proof}
  Let us fix an arbitrary \(\epsilon \in (0,1/2)\).
  Since \(C^\infty_c(\Rn;\Rn)\) is dense in \(\Q\) (cf.~\cite[Theorem 2.4]{girault_raviart}),
  we can select \(\sigma_{\epsilon} \in C^\infty_c(\Rn;\Rn)\) such that
  \(\|\sigma_\epsilon-\sigma\|_{\Q}<\epsilon\). In particular,
  \(f_\epsilon = \Div \sigma_\epsilon\) satisfies the estimate \(\|f_\epsilon-f\|_{L^q(\O)}<\epsilon\),
  and we have the equality
  \begin{equation}\label{eq:idloc_bound}
    \lim_{\epsilon\searrow 0}\Idloc(\kappa;\sigma_\epsilon) = \Idloc(\kappa;\sigma),
  \end{equation}
  since \(\Idloc(\kappa;\cdot)\) is continuous with respect to strong convergence in \(L^q(\O)\).
  Let us put \(\qnl_{\delta,\epsilon} = \ddot{F}_{\delta}\sigma_{\epsilon}\),
  and \(f_{\delta,\epsilon} = \diver \qnl_{\delta,\epsilon}\).
  According to Proposition~\ref{prop:Fconsist}, we can select \(\hat{\delta}(\epsilon)>0\),
  such that for all \(\delta \in (0,\hat{\delta}(\epsilon))\) we have the bound:
  \[\|f_{\delta,\epsilon} - f\|_{L^q(\O)}
  \leq \|\diver \qnl_{\delta,\epsilon} - f_{\epsilon}\|_{L^q(\O)}
  + \|f_{\epsilon} - f\|_{L^q(\O)}
  < \|\diver \ddot{F}_{\delta} \sigma_{\epsilon} - \Div \sigma_{\epsilon}\|_{L^q(\O)} + \epsilon < 2\epsilon.\]
  Let \(\qnl_{\delta}\in \Qd(f)\) and \(\pnl_{\delta,\epsilon} \in \Qd(f_{\delta,\epsilon})\) be the solutions to~\eqref{eq:nonlocal_dual}
  corresponding to the two different volumetric heat sources \(f\) and \(f_{\delta,\epsilon}\).
  In view of the stability estimate, Corollary~\ref{cor:Lip_stab}, we have
  the following estimate:
  \[\begin{aligned}
    \Id_{\delta}(\kappa;\qnl_{\delta})
    \leq
    \Id_{\delta}(\kappa;\pnl_{\delta,\epsilon})
    + L_\epsilon\|f-f_{\delta,\epsilon}\|_{L^q(\Od)}\leq 
    \Id_{\delta}(\kappa;\pnl_{\delta,\epsilon})+2\epsilon L_\epsilon,
  \end{aligned}\]
  where \(L_\epsilon\) is continuous and nondecreasing with respect to \(\|f_{\delta,\epsilon}\|_{L^q(\Od)}\leq \|f\|_{L^q(\O)}+2\epsilon\),
  and therefore remains bounded for all small \(\epsilon>0\).
  Consequently, we can write
  \[\begin{aligned}
  \limsup_{\delta \searrow 0}\Id_{\delta}(\kappa;\qnl_{\delta}) &\leq
  \limsup_{\delta \searrow 0}
  \Id_{\delta}(\kappa;\pnl_{\delta,\epsilon}) + 2\epsilon L_\epsilon
  \leq
  \limsup_{\delta \searrow 0}
  \Id_{\delta}(\kappa;\qnl_{\delta,\epsilon}) + 2\epsilon L_\epsilon
   \leq \Idloc(\kappa;\sigma_\epsilon) +2\epsilon L_\epsilon,
  \end{aligned}\]
  where we have utilized the optimality of \(\pnl_{\delta,\epsilon}\) as well
  as Proposition~\ref{prop:rstar2}. 
  Finally, it remains to let \(\epsilon\searrow 0\) and utilize~\eqref{eq:idloc_bound} to arrive at the desired claim:
  \[
    \limsup_{\delta \searrow 0}\Id_{\delta}(\kappa;\qnl_{\delta})
    \leq \Idloc(\kappa;\sigma).\qedhere
  \]
  \end{proof}
  
  \begin{corollary}\label{cor:gamma_limsup}
    For each \(\kappa \in \kadmd\) we have the inequality
    \[
      \limsup_{\delta\searrow 0} \hat{\imath}_{\delta}(\kappa)
      \leq \hat{\imath}_{\text{loc}}(\kappa).
    \]
    In particular, the optimal values of the local~\eqref{eq:local_min} and nonlocal~\eqref{eq:nl_min} problems satisfy the inequality
    \[\limsup_{\delta\searrow 0} d_{\delta} \leq d_{\text{loc}}.\]
  \end{corollary}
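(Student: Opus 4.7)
The plan is to derive the corollary directly from Proposition~\ref{thm:nonlocal_approximation_2sided}, which already packages the hard work of constructing a suitable recovery family of fluxes via the nonlinear operator \(\ddot{F}_{\delta}\). Both inequalities then follow by routine infimum manipulations in the spirit of \(\Gamma\)-convergence \(\limsup\) arguments.

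For the pointwise bound, I would fix an arbitrary \(\kappa \in \kadmd\) (equivalently, an element of \(\kadm\) under the one-to-one correspondence~\eqref{eq:kadmd1}) and an arbitrary \(\sigma \in \Q(f)\). Proposition~\ref{thm:nonlocal_approximation_2sided} then yields a family \((\qnl_{\delta})\) with \(\qnl_{\delta} \in \Qd(f)\) and \(\limsup_{\delta\searrow 0}\Id_{\delta}(\kappa;\qnl_{\delta}) \leq \Idloc(\kappa;\sigma)\). Since each \(\qnl_{\delta}\) is feasible in the nonlocal state problem,
\[
  \hat{\imath}_{\delta}(\kappa) \leq \Id_{\delta}(\kappa;\qnl_{\delta}),
\]
so passing to the \(\limsup\) in \(\delta\) gives \(\limsup_{\delta\searrow 0}\hat{\imath}_{\delta}(\kappa) \leq \Idloc(\kappa;\sigma)\). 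The bound on the right does not depend on \(\delta\), so taking the infimum over \(\sigma \in \Q(f)\) on the right-hand side yields
\[
  \limsup_{\delta\searrow 0}\hat{\imath}_{\delta}(\kappa) \leq \inf_{\sigma\in \Q(f)}\Idloc(\kappa;\sigma) = \hat{\imath}_{\text{loc}}(\kappa).
\]

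For the second assertion, I would simply observe that for every admissible \(\kappa \in \kadm\),
\[
  d_{\delta} = \inf_{\kappa'\in\kadm}\hat{\imath}_{\delta}(\kappa') \leq \hat{\imath}_{\delta}(\kappa),
\]
so that by the first part, \(\limsup_{\delta\searrow 0} d_{\delta} \leq \limsup_{\delta\searrow 0}\hat{\imath}_{\delta}(\kappa) \leq \hat{\imath}_{\text{loc}}(\kappa)\). Taking the infimum over \(\kappa \in \kadm\) on the right then gives \(\limsup_{\delta\searrow 0} d_{\delta} \leq d_{\text{loc}}\).

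There is no substantial obstacle here, as the recovery construction --- the only genuinely nontrivial ingredient --- has already been carried out in Proposition~\ref{thm:nonlocal_approximation_2sided} (which in turn relies on Propositions~\ref{prop:rstar2} and~\ref{prop:Fconsist} and the uniform local Lipschitz stability of Corollary~\ref{cor:Lip_stab}). The only point that requires mild care is keeping the notation consistent: since Proposition~\ref{thm:nonlocal_approximation_2sided} is stated for \(\kappa \in \kadmd\) in the sense of the convention introduced after~\eqref{eq:kadmd1}, one should read \(\hat{\imath}_{\delta}(\kappa)\) and \(\hat{\imath}_{\text{loc}}(\kappa)\) accordingly, which is harmless thanks to the bijection between \(\kadm\) and \(\kadmd\).
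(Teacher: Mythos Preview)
Your proposal is correct and follows essentially the same route as the paper: both derive the corollary directly from Proposition~\ref{thm:nonlocal_approximation_2sided}. The only cosmetic difference is that the paper applies the proposition to the specific minimizer \(\sigma = \argmin_{\tau\in\Q(f)}\Idloc(\kappa;\tau)\) (and, for the second part, to an optimal \(\kappa\)), whereas you take an arbitrary \(\sigma\in\Q(f)\) (respectively \(\kappa\in\kadm\)) and then pass to the infimum; this is a harmless and standard variation.
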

  \begin{proof}
    The first claim follows from applying Proposition~\ref{thm:nonlocal_approximation_2sided} to 
    \(\sigma = \argmin_{\tau\in\Q(f)}\hat{I}_{\text{loc}}(\kappa;\tau)\),
    while the second follows from applying the same result to a solution to~\eqref{eq:nl_min}.
  \end{proof}
  
  Now, as a direct consequence of Proposition~\ref{prop:liminf} and Corollary~\ref{cor:gamma_limsup} we obtain the following nonlocal-to-local approximation result.
  
  \begin{theorem}\label{thm:primal_summary1}
    The family of objective functions \(i_{\delta}(\cdot)\)  \(\Gamma\)-converges, as \(\delta\searrow 0\), towards \(i_{\text{loc}}(\cdot)\) on the set
    \(\kadm\subset L^\infty(\O)\), considered with its weak\(^*\) topology.
  \end{theorem}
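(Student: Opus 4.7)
The plan is to verify the two inequalities that define $\Gamma$-convergence with respect to the weak$^*$ topology on $\kadm$, both of which essentially correspond to results already established in the preceding subsections. Recall that $\Gamma$-convergence of $\hat{\imath}_\delta$ to $\hat{\imath}_{\text{loc}}$ on $(\kadm,\text{weak}^*)$ amounts to: (i) for every sequence $\delta_k\searrow 0$ and every $\kappa_{\delta_k}\wsto \kappa$ in $\kadm$, one has $\hat{\imath}_{\text{loc}}(\kappa) \leq \liminf_{k\to\infty}\hat{\imath}_{\delta_k}(\kappa_{\delta_k})$, and (ii) for every $\kappa \in \kadm$ there exists a recovery family $\kappa_\delta \wsto \kappa$ with $\limsup_{\delta\searrow 0}\hat{\imath}_\delta(\kappa_\delta) \leq \hat{\imath}_{\text{loc}}(\kappa)$.

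For the liminf inequality (i), I would simply invoke Proposition~\ref{prop:liminf}, which was proved precisely for an arbitrary weakly$^*$ convergent sequence $\kappa_{\delta_k} \wsto \kappa$ in $\kadm$ and delivers exactly the desired bound. No additional work is needed here beyond pointing at the statement.

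For the limsup inequality (ii), the natural choice of recovery sequence is the constant one, namely $\kappa_\delta \equiv \kappa$ for all $\delta \in (0,\overline{\delta})$. This trivially converges weakly$^*$ to $\kappa$ in $L^\infty(\O)$, and Corollary~\ref{cor:gamma_limsup} then yields $\limsup_{\delta\searrow 0}\hat{\imath}_\delta(\kappa) \leq \hat{\imath}_{\text{loc}}(\kappa)$, which is exactly what is required. Note that Corollary~\ref{cor:gamma_limsup} in turn rests on Proposition~\ref{thm:nonlocal_approximation_2sided}, which constructs an explicit approximating flux family via the nonlinear operator $\ddot{F}_\delta$ applied to a smooth approximation of the local minimizer, combined with the uniform Lipschitz stability from Corollary~\ref{cor:Lip_stab} to correct for the divergence mismatch; so the ``hard work'' has already been absorbed into that corollary.

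Combining (i) and (ii) gives the $\Gamma$-convergence claim, and the proof concludes in a couple of lines. There is essentially no obstacle here, since both halves of the definition are available off the shelf; the only point that warrants explicit mention is that the recovery sequence may be chosen constant in $\delta$, so no delicate construction at the level of conductivities is required—the whole burden of the localization has been carried by the flux reconstructions $R_\delta$ (for the liminf half) and $\ddot{F}_\delta$ (for the limsup half) in the preceding sections.
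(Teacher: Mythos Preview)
Your proposal is correct and matches the paper's own argument exactly: the theorem is stated as a direct consequence of Proposition~\ref{prop:liminf} (the liminf inequality) and Corollary~\ref{cor:gamma_limsup} (the limsup inequality with the constant recovery sequence $\kappa_\delta\equiv\kappa$). Your additional remarks about where the real work was done---in the flux reconstructions $R_\delta$ and $\ddot{F}_\delta$---are accurate and appropriate context.
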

    
  As a direct consequence of \(\Gamma\)-convergence, we obtain the convegrence of optimal values, namely
  \[d_{\text{loc}}=\lim_{\delta\searrow 0} d_{\delta},\]
  and consequently owing to the strong duality also
  \[
    p_{\text{loc}}
    =
    \lim_{\delta\searrow 0}
    p_{\delta},
  \]
  see~\cite{braides2006handbook}.
  Furthermore, in view of a priori bounds on the optimal solutions we have established, we also get their convergence in the following sense.
  Let \(\{(\kappa_\delta,\qnl_\delta)\}\) be a family of optimal solutions to~\eqref{eq:nl_min} as \(\delta\searrow 0\).
  Then, the family \(\{(\kappa_\delta,R_\delta \qnl_\delta)\}\) is sequentially compact, with respect to the weak\(^*\) topology of \(L^\infty(\O)\)
  and  the weak topology of \(L^q(\O;\Rn)\).
  Each limit point of \(\{(\kappa_\delta,R_\delta \qnl_\delta)\}\) as \(\delta\searrow 0\) is an optimal solution to~\eqref{eq:local_min}.

\section{Consistency of \(\ddot{F}_\delta\)}\label{sec:Fconsist}
We now proceed to establishing the consistency of \(\ddot{F}_\delta\), that is, we will prove Proposition~\ref{prop:Fconsist}.
Owing to the non-linearity of \(\ddot{F}_\delta\) for \(p\neq 2\), the proofs are somewhat longer than in the quadratic case \(p=2\).
We begin with the following lemma, which will be applied to Taylor polynomials of \(\diver\ddot{F}_\delta\sigma\) later on.

\begin{lemma}\label{lem:crazytrace}
  Let \(e\in \Sn\) and \(A\in \R^{n\times n}\) be arbitrary.
  Then
\begin{equation}\label{eq:traceA}
  |\Sn|^{-1}
  \int_{\Sn}
  \left|e^T s\right|^{p-2}
  s^T\left[(p-1)I  + (2-p)ee^T\right]As
  \,\mathrm{d}s = K_{p,n} \trace A.
\end{equation}
\end{lemma}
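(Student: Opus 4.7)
Both sides of the claimed identity are linear functionals of $A \in \R^{n \times n}$, so my plan is to match them as linear functionals. The first step is to expand the integrand as
\[
  s^T\bigl[(p-1)I + (2-p)ee^T\bigr]As = (p-1)\, s^T A s + (2-p)(e \cdot s)(e^T A s),
\]
splitting the LHS into two scalar integrals linear in $A$.

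Next I would exploit symmetry. The LHS, viewed as a functional of $A$, is invariant under $A \mapsto R^T A R$ for every $R \in O(n)$ satisfying $Re = e$: the substitution $s \mapsto Rs$ leaves $|e \cdot s|$ and the uniform measure on $\Sn$ unchanged, and sends the integrand at $(s, A)$ to the integrand at $(s, R^T A R)$. The RHS is invariant trivially. Since the space of linear functionals on $\R^{n \times n}$ invariant under this $O(n-1)$-conjugation action is two-dimensional and spanned by $A \mapsto \trace A$ and $A \mapsto e^T A e$, there exist constants $\alpha = \alpha(n,p)$ and $\beta = \beta(n,p)$ with $\text{LHS}(A) = \alpha \trace A + \beta\, e^T A e$. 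The claim is then equivalent to $\alpha = K_{p,n}$ and $\beta = 0$.

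To determine $\alpha$ and $\beta$, I would plug in two test matrices with linearly independent values of $(\trace A, e^T A e)$. For $A = ee^T$ both terms of the integrand collapse to $|e \cdot s|^p$, giving $\alpha + \beta = K_{p,n}$. For $A = I$ the integrand becomes $(p-1)|e \cdot s|^{p-2} + (2-p)|e \cdot s|^p$, which yields
\[
  n\alpha + \beta = (p-1)\, K_{p-2,n} + (2-p)\, K_{p,n},
\]
where I abbreviate $K_{p-2,n} := |\Sn|^{-1} \int_{\Sn} |e \cdot s|^{p-2}\, \mathrm{d}s$. Subtracting the two equations gives $(n-1)\alpha = (p-1)(K_{p-2,n} - K_{p,n})$, so the desired $\alpha = K_{p,n}$ (from which $\beta = 0$ follows automatically) is equivalent to the moment identity
\[
  (p-1)\, K_{p-2,n} = (n + p - 2)\, K_{p,n}.
\]

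This last identity is the main --- and essentially the only non-trivial --- step. I would establish it by reducing to Euler's Beta function: using spherical coordinates with $e$ as the polar axis,
\[
  K_{k,n} = \frac{|\mathbb{S}^{n-2}|}{|\Sn|}\, B\!\left(\frac{k+1}{2}, \frac{n-1}{2}\right),
\]
so that
\[
  \frac{K_{p-2,n}}{K_{p,n}} = \frac{\Gamma\!\left(\tfrac{p-1}{2}\right)\, \Gamma\!\left(\tfrac{p+n}{2}\right)}{\Gamma\!\left(\tfrac{p+1}{2}\right)\, \Gamma\!\left(\tfrac{p+n-2}{2}\right)} = \frac{n+p-2}{p-1}
\]
after one application of the recursion $\Gamma(x+1) = x\Gamma(x)$ at $x = (p-1)/2$ and at $x = (p+n-2)/2$, which closes the argument.
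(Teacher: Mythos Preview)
Your argument is correct. The paper takes a different, more hands-on route: it first rotates to \(e=e_n\), then computes directly that
\(\int_{\Sn}|s_n|^{p-2}(e_n^Ts)(e_n^TAs)\,\mathrm{d}s=|\Sn|K_{p,n}A_{nn}\) and
\(\int_{\Sn}|s_n|^{p-2}s^TAs\,\mathrm{d}s=\sum_{i<n}E_{i,n}A_{ii}+|\Sn|K_{p,n}A_{nn}\), and finally evaluates \(E_{n-1,n}=\int_{\Sn}|s_n|^{p-2}s_{n-1}^2\,\mathrm{d}s=\tfrac{1}{p-1}|\Sn|K_{p,n}\) via an integration by parts in the last spherical angle. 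Your approach replaces this explicit computation by the \(O(n-1)\)-invariance observation (forcing the LHS to be a combination of \(\trace A\) and \(e^TAe\)) and two test matrices, reducing everything to the moment relation \((p-1)K_{p-2,n}=(n+p-2)K_{p,n}\), which you then read off from the Beta function. The two are equivalent at the core: the paper's identity \(E_{n-1,n}=\tfrac{1}{p-1}|\Sn|K_{p,n}\) is, after using \(\sum_i s_i^2=1\), precisely your moment relation. Your route is cleaner and more structural (and makes the role of the exponent \(p\) transparent through the Gamma recursion), while the paper's is entirely self-contained and avoids special functions.
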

\begin{proof}
  Let \(Q\in \R^{n\times n}\) be an orthogonal transformation mapping \(e\) to some other unit vector \(\tilde{e}=Qe\in \Sn\).
  Then the left hand side of~\eqref{eq:traceA} reduces to
  \begin{equation}\label{eq:crazytrace0}
    |\Sn|^{-1}
    \int_{\Sn}
    \left|\tilde{e}^T \tilde{s}\right|^{p-2}
    \tilde{s}^T\left[(p-1)I  + (2-p)\tilde{e}\tilde{e}^T\right]QAQ^T\tilde{s}
    \,\mathrm{d}\tilde{s},
  \end{equation}
  where we have utilized the variable substitution \(\tilde{s} = Qs\).
  The right hand side does not change, since \(\trace A = \trace(QAQ^T)\).
  Therefore, without any loss of generality we may assume that \(e=e_n\),
  where \(e_1,\dots,e_n\in\Sn\) are the standard basis vectors in \(\Rn\).

  Then
  \begin{equation}\label{eq:crazytrace1}\begin{aligned}
    \int_{\Sn}\left|e_n^T s\right|^{p-2} [e_n^T s]e_n^T A s\,\mathrm{d}s
    &=
    \sum_{i,j=1}^n A_{ij} \int_{\Sn}\left|e_n^T s\right|^{p-2} [e_n^T s][e_n^T e_i] [e_j^T s]\,\mathrm{d}s
    \\&= A_{nn}\int_{\Sn}\left|e_n^T s\right|^{p}\,\mathrm{d}s
    = |\Sn|K_{p,n}A_{nn},
  \end{aligned}\end{equation}
  with integrals corresponding to \(i\neq n\) or \(j\neq n\) evaluating to \(0\) owing to the orthogonality and symmetry considerations.

  Furthermore
  \begin{equation}\label{eq:crazytrace2}\begin{aligned}
    \int_{\Sn}\left|e_n^T s\right|^{p-2} s^T A s\,\mathrm{d}s
    &=
    \sum_{i,j=1}^n A_{ij} \int_{\Sn}\left|e_n^T s\right|^{p-2} [e_i^T s][e_j^T s]\,\mathrm{d}s
    = \sum_{i=1}^{n-1}E_{i,n}A_{ii} + |\Sn|K_{p,n}A_{nn},
  \end{aligned}\end{equation}
  where we denoted the value of the integral corresponding to \(i=j\neq n\) by \(E_{i,n}\) and used the fact that the integrals corresponding to \(i\neq j\) evaluate to \(0\).
  Owing to the symmetry, \(E_{1,n}= E_{2,n}=\dots = E_{n-1,n}\), so we only need to compute one of them.

  Let us directly evaluate \(E_{n-1,n}\) using the spherical coordinates
  \[\begin{aligned}
    s_1 &= \cos(\phi_1),  &\qquad  \phi_1 &\in [0,\pi],\\
    s_2 &= \sin(\phi_1)\cos(\phi_2),  &\qquad  \phi_2 &\in [0,\pi],\\
    &\vdots  &\qquad &\vdots\\
    s_{n-1} &= \sin(\phi_1)\cdots\sin(\phi_{n-2})\cos(\phi_{n-1}),
    &\qquad  \phi_{n-1} &\in [0,2\pi],\\
    s_{n} &= \sin(\phi_1)\cdots\sin(\phi_{n-2})\sin(\phi_{n-1}),
  \end{aligned}\]
  with \(\mathrm{d}s = \sin^{n-2}(\phi_1)\sin^{n-3}(\phi_2)\dots\sin(\phi_{n-2})\,\mathrm{d}\phi_1\,\mathrm{d}\phi_2\cdots\,\mathrm{d}\phi_{n-1}\).
  The integral becomes
  \begin{equation}\label{eq:crazytrace3}\begin{aligned}
    &E_{n-1,n} = \int_{\Sn}\left|s_n\right|^{p-2} s_{n-1}^2\,\mathrm{d}s\\
    &=\int_{0}^{2\pi}\cdots\int_{0}^{2\pi}\int_{0}^{\pi} \left|\prod_{i=1}^{n-2} \sin(\phi_{i})\right|^{p} \left[\left|\sin(\phi_{n-1})\right|^{p-2}\cos^2(\phi_{n-1})\right] \prod_{i=1}^{n-2} \sin^{n-1-i}(\phi_{i}) \,\mathrm{d}\phi_1\,\mathrm{d}\phi_2\cdots\,\mathrm{d}\phi_{n-1}\\
    &= \int_{\Sn}\left|s_n\right|^{p}\,\mathrm{d}s
    \left[
      \int_{0}^{2\pi}|\sin(\phi_{n-1})|^p\mathrm{d}\phi_{n-1}
    \right]^{-1}
    \left[
    \int_{0}^{2\pi}|\sin(\phi_{n-1})|^{p-2} \cos^2(\phi_{n-1})\mathrm{d}\phi_{n-1}\right]\\
    \\&=
    |\Sn|K_{p,n}
    \left[
      4\int_{0}^{\pi/2}\sin^p(\phi_{n-1})\mathrm{d}\phi_{n-1}
    \right]^{-1}
    \left[
    \frac{4}{p-1}\int_{0}^{\pi/2}[\sin^{p-1}(\phi_{n-1})]'\cos(\phi_{n-1})\mathrm{d}\phi_{n-1}\right]\\
    &=\frac{1}{p-1}|\Sn|K_{p,n}
    \left[
      \int_{0}^{\pi/2}\sin^p(\phi_{n-1})\mathrm{d}\phi_{n-1}
    \right]^{-1}
    \left[
    \int_{0}^{\pi/2}\sin^{p}(\phi_{n-1})\mathrm{d}\phi_{n-1}\right]=\frac{1}{p-1}|\Sn|K_{p,n}.
  \end{aligned}
 \end{equation}
  Combining \eqref{eq:crazytrace1},
  \eqref{eq:crazytrace2}, and \eqref{eq:crazytrace3} yields the desired claim~\eqref{eq:traceA}.
\end{proof}

A simple consequence of Lemma~\ref{lem:crazytrace} and the normalization condition~\eqref{eq:normalization} is the following statement about volumetric integrals related to~\eqref{eq:traceA}.

\begin{corollary}\label{cor:crazytrace}
  Let \(e\in \Sn\) and \(A\in \R^{n\times n}\) be arbitrary.
  Then
\begin{equation}\label{eq:traceAV}
  \lim_{\epsilon\searrow 0}\int_{B(0,\delta)\setminus B(0,\epsilon)}
  \left|e^T z\right|^{p-2}
  z^T\left[(p-1)I  + (2-p)ee^T\right]Az\Wd^p(z)
  \,\mathrm{d}z = \trace A.
\end{equation}
\end{corollary}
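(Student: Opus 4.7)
The plan is to pass to spherical coordinates, separate the radial and angular dependence of the integrand via Fubini, and then identify the angular factor with the quantity computed in Lemma~\ref{lem:crazytrace} and the radial factor with the quantity appearing in the normalization~\eqref{eq:normalization}. Writing \(z = rs\) with \(r\in(\epsilon,\delta)\), \(s\in\Sn\), and \(\mathrm{d}z = r^{n-1}\,\mathrm{d}r\,\mathrm{d}s\), and using that \(\Wd\) is radial together with the \((p{-}2)\)-homogeneity of \(|e^T\cdot|^{p-2}\) and the quadratic nature of the bilinear form in \(z\), the integrand factorizes as
\[
 r^{n-1+p}\,\Wd^p(r)\,\cdot\,\bigl|e^T s\bigr|^{p-2}\, s^T\!\bigl[(p-1)I+(2-p)ee^T\bigr]As.
\]

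Next I would evaluate the two factors separately. The angular factor \(\int_{\Sn}|e^T s|^{p-2} s^T[(p-1)I+(2-p)ee^T]As\,\mathrm{d}s\) is exactly the integral from Lemma~\ref{lem:crazytrace}, hence equals \(|\Sn| K_{p,n}\trace A\). For the radial factor, I would rewrite~\eqref{eq:normalization} in spherical coordinates,
\[
K_{p,n}^{-1} = \int_{\Rn} |x|^p\,\Wd^p(x)\,\mathrm{d}x = |\Sn|\int_0^{\delta} r^{n-1+p}\,\Wd^p(r)\,\mathrm{d}r,
\]
so that \(\int_0^\delta r^{n-1+p}\Wd^p(r)\,\mathrm{d}r = |\Sn|^{-1}K_{p,n}^{-1}\). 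Since \(r^{n-1+p}\Wd^p(r)\geq 0\), monotone convergence gives \(\int_\epsilon^\delta r^{n-1+p}\Wd^p(r)\,\mathrm{d}r \to |\Sn|^{-1}K_{p,n}^{-1}\) as \(\epsilon\searrow 0\). Multiplying the two factors yields \(|\Sn|^{-1}K_{p,n}^{-1}\cdot|\Sn| K_{p,n}\trace A = \trace A\), as claimed.

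There is no real obstacle here; the only point worth noting is the reason for the \(\epsilon\)-truncation in the statement. For \(1<p<2\) the factor \(|e^T z|^{p-2}\) is singular on the hyperplane \(e^T z = 0\), which is why the integrand is written as a principal-value-type limit. However, this singularity has codimension one on \(\Sn\) and \(\int_{\Sn}|e^T s|^{p-2}\,\mathrm{d}s\) is absolutely convergent for all \(p>1\), so Fubini applies and the \(\epsilon\searrow 0\) limit coincides with the honest Lebesgue integral over \(B(0,\delta)\); no delicate cancellation is needed.
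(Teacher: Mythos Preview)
Your proof is correct and follows essentially the same approach as the paper: both pass to spherical coordinates, apply Lemma~\ref{lem:crazytrace} to the angular integral, and use the normalization~\eqref{eq:normalization} for the radial integral. Your additional remark on absolute integrability for \(1<p<2\) is a nice clarification not spelled out in the paper.
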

\begin{proof}
  We evaluate the integral~\eqref{eq:traceAV} in spherical coordinates:
  \[\begin{aligned}
    &\lim_{\epsilon\searrow 0}\int_{B(0,\delta)\setminus B(0,\epsilon)}
    \left|e^T z\right|^{p-2}
    z^T\left[(p-1)I  + (2-p)ee^T\right]Az\Wd^p(z)
    \,\mathrm{d}z \\&= 
    \lim_{\epsilon\searrow 0}\int_{\epsilon}^{\delta} r^{n-1}r^p\Wd^p(r)
    \int_{\Sn} \left|e^T s\right|^{p-2}
    s^T\left[(p-1)I  + (2-p)ee^T\right]As\,\mathrm{d}s\,\mathrm{d}r
    \\&=   K_{p,n}  \lim_{\epsilon\searrow 0}\int_{\epsilon}^{\delta} r^{n-1}r^p\Wd^p(r) |\Sn|\,\mathrm{d}r\,\trace A
    =  K_{p,n} \lim_{\epsilon\searrow 0}\int_{B(0,\delta)\setminus B(0,\epsilon)}|z|^p\Wd^p(z)\,\mathrm{d}z\,\trace A = \trace A,
  \end{aligned}\]
  owing to Lemma~\ref{lem:crazytrace} and the normalization condition~\eqref{eq:normalization}.
\end{proof}

The consistency of \(\ddot{F}_{\delta}\sigma\) hinges on the behaviour of the integrals
\begin{align}\label{eq:I_sigma_delta}
  J_{p,\delta,\epsilon}[\sigma](x) &=   
  \int_{B(0,\delta)\setminus B(0,\epsilon)}
    j_{p,\sigma,z/|z|}(x+z)|z|^{p-1}\Wd^p(z)\,\mathrm{d}z,\qquad\text{where}\\
    j_{p,\sigma,s}(x) &= |\sigma(x)|^{2-p}|\sigma(x)\cdot s|^{p-2}[\sigma(x)\cdot s],
\end{align}
for smooth enough \(\sigma\), where \(s \in \Sn\) is an arbitrary unit vector. The reason for this is as follows.

\begin{proposition}\label{prop:rstar1}
  Let \(\sigma\) be such that the family \(J_{p,\delta,\epsilon}[\sigma]\) is dominated by some \(L^q(\O)\)-function, uniformly with respect to \(\epsilon>0\).
  Then \(\ddot{F}_{\delta}\sigma\in \Qd\) and
  \[\diver \ddot{F}_{\delta}\sigma(x)= \lim_{\epsilon\searrow 0}J_{p,\delta,\epsilon}[\sigma](x).\]
\end{proposition}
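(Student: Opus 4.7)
The plan is to work directly from the adjoint characterization of \(\diver\) given by~\eqref{eq:intbyparts}: for a test function \(u \in C^\infty_c(\O)\), I would evaluate the dual pairing \(-\iint_{\Od\times\Od}\ddot{F}_\delta\sigma\,\grad u\,\dx\dx'\) explicitly and show it equals \(\int_\O u(x)\,g(x)\,\dx\) with \(g(x) = \lim_{\epsilon\searrow 0}J_{p,\delta,\epsilon}[\sigma](x)\). If this can be established for \(u\) in a dense subset of \(\Udz\) and \(g\) lies in \(L^q(\O)\), one concludes simultaneously that \(\ddot{F}_\delta\sigma \in \Qd\) and that \(\diver\ddot{F}_\delta\sigma = g\).

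First I would introduce the cut-off \(O_\epsilon = \{(x,x')\in\Od\times\Od \colon |x-x'|>\epsilon\}\) familiar from Proposition~\ref{prop:diver}. Since \(\ddot{F}_\delta\sigma \in L^q(\Od\times\Od)\) (as already noted just before the statement) and \(\grad u \in L^p(\Od\times\Od)\), their product lies in \(L^1\) by H{\"o}lder's inequality, so dominated convergence gives \(\iint\ddot{F}_\delta\sigma\,\grad u = \lim_{\epsilon\searrow 0}\iint_{O_\epsilon}\ddot{F}_\delta\sigma\,\grad u\). On each \(O_\epsilon\) the kernel \(\Wd(x-x')\) is bounded, so Fubini's theorem applies directly. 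Exploiting the antisymmetries \(\ddot{F}_\delta\sigma(x',x) = -\ddot{F}_\delta\sigma(x,x')\) (cf.~\eqref{eq:antisym}) and \(\grad u(x',x) = -\grad u(x,x')\)---concretely, by splitting \([u(x)-u(x')]\) in the definition of \(\grad u\) and relabeling \(x\leftrightarrow x'\) in one half of the resulting double integral---the pairing collapses to
\[
-\iint_{O_\epsilon}\ddot{F}_\delta\sigma(x,x')\,\grad u(x,x')\,\dx\dx' = -2\iint_{O_\epsilon} u(x)\,\ddot{F}_\delta\sigma(x,x')\Wd(x-x')\,\dx'\dx.
\]

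Next I would unfold the definition~\eqref{eq:Fddot} and substitute \(z = x'-x\). Using the identity \(|\sigma\cdot z|^{p-2}(\sigma\cdot z) = |z|^{p-1}|\sigma\cdot s|^{p-2}(\sigma\cdot s)\) with \(s=z/|z|\), together with the fact that \(\Wd^p\) is even while \(|\cdot|^{p-2}(\cdot)\) is odd, the integrand simplifies to
\[
-2\ddot{F}_\delta\sigma(x,x+z)\Wd(z) = \bigl[j_{p,\sigma,z/|z|}(x) + j_{p,\sigma,z/|z|}(x+z)\bigr]|z|^{p-1}\Wd^p(z).
\]
For \(x\) in \(\supp(u)\subset\O\) one has \(B(x,\delta)\subset\Od\), so the \(z\)-integral runs over the entire spherical annulus \(\{\epsilon<|z|<\delta\}\). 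The first summand \(j_{p,\sigma,z/|z|}(x)\) is, at fixed \(x\), an odd function of \(s=z/|z|\) (since \(j_{p,\sigma,-s}(x) = -j_{p,\sigma,s}(x)\)), so its integral over each sphere---and hence over the annulus---vanishes. The remaining piece integrates to exactly \(J_{p,\delta,\epsilon}[\sigma](x)\), yielding
\[
-\iint_{O_\epsilon}\ddot{F}_\delta\sigma\,\grad u\,\dx\dx' = \int_\O u(x)\,J_{p,\delta,\epsilon}[\sigma](x)\,\dx, \qquad \forall\epsilon>0.
\]

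It remains to pass to the limit \(\epsilon\searrow 0\) on both sides. The left-hand side converges by the H{\"o}lder/DCT argument above. For the right-hand side, the standing hypothesis that \(\{J_{p,\delta,\epsilon}[\sigma]\}_\epsilon\) is dominated by an \(L^q(\O)\) function allows invoking dominated convergence once the a.e.\ pointwise convergence of \(J_{p,\delta,\epsilon}[\sigma]\) is established; the latter is obtained by rewriting \(J_{p,\delta,\epsilon}[\sigma](x) = \int_{\{\epsilon<|z|<\delta\}}[j_{p,\sigma,z/|z|}(x+z)-j_{p,\sigma,z/|z|}(x)]|z|^{p-1}\Wd^p(z)\,\mathrm{d}z\) (the subtracted piece integrating to zero by odd symmetry) and invoking a Taylor-type bound \(|j_{p,\sigma,s}(x+z)-j_{p,\sigma,s}(x)|\lesssim |z|\)---analogous to the one in the proof of Proposition~\ref{prop:rstar2}---which dominates the \(z\)-integrand by an integrable multiple of \(|z|^p\Wd^p(z)\). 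The identity then extends to all \(u \in \Udz\) by density, and~\eqref{eq:intbyparts} identifies \(\diver\ddot{F}_\delta\sigma(x)\) with \(\lim_{\epsilon\searrow 0}J_{p,\delta,\epsilon}[\sigma](x)\in L^q(\O)\). The main obstacle I expect is the careful bookkeeping of the symmetry manipulations at the cut-off level, and especially the fact that the critical odd-symmetry cancellation of \(j_{p,\sigma,z/|z|}(x)\) requires the \(z\)-range to be a complete centered annulus---a property that holds for \(x\in\O\) thanks to \(B(x,\delta)\subset\Od\), but not for general \(x\in\Od\), which is precisely why we restrict attention to \(u\) supported in \(\O\).
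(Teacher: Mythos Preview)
Your approach is essentially the paper's: compute the pairing \(-\iint\ddot{F}_\delta\sigma\,\grad u\), pass to the \(\epsilon\)-cut-off, exploit the (anti)symmetries and the substitution \(z=x'-x\) to reduce to \(\int_\O u\,J_{p,\delta,\epsilon}[\sigma]\), and invoke dominated convergence. Two small remarks are in order.

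First, the paper works with arbitrary \(u\in\Udz\) from the outset rather than restricting to \(u\in C^\infty_c(\O)\). Your symmetry computation goes through verbatim for such \(u\) (since \(u\equiv 0\) on \(\Od\setminus\O\) by definition of \(\Udz\), and \(B(x,\delta)\subset\Od\) for every \(x\in\O\)), so the density step at the end is unnecessary---and the density of \(C^\infty_c(\O)\) in \(\Udz\) is in fact never established in the paper.

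Second, the pointwise bound \(|j_{p,\sigma,s}(x+z)-j_{p,\sigma,s}(x)|\lesssim|z|\) you invoke to secure pointwise convergence of \(J_{p,\delta,\epsilon}[\sigma](x)\) holds only for \(p\ge 2\) (this is Proposition~\ref{prop:i_lip}); for \(1<p<2\) the factor \(|\sigma\cdot s|^{p-2}\) blows up, and only the \emph{spherical average} \(\int_{\Sn}j_{p,\sigma,s}(x+rs)\,\mathrm{d}s\) admits such an estimate (Lemma~\ref{lem:Lip_plt2}). The paper's proof of this proposition sidesteps the issue entirely, tacitly treating the existence of the pointwise limit as implicit in the conclusion and deferring its verification to Section~\ref{sec:Fconsist}.
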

\begin{proof}
  Let us take an arbitrary \(u \in \Udz\).
  We begin with the integration by parts:
  \[
    \begin{aligned}
      &\int_{\Od}\int_{\Od}\ddot{F}_{\delta}\sigma(x,x')\grad u(x,x')\dx'\dx
      \\&=
      \frac{1}{2}\int_{\Od}\int_{\Od} [u(x)-u(x')]
      |\sigma(x)|^{2-p}|\sigma(x)\cdot (x-x')|^{p-2}[\sigma(x)\cdot (x-x')]\Wd^p(x-x')\dx'\dx
      \\&+
      \frac{1}{2}\int_{\Od}\int_{\Od} [u(x)-u(x')]
      |\sigma(x')|^{2-p}|\sigma(x')\cdot (x-x')|^{p-2}[\sigma(x')\cdot (x-x')]\Wd^p(x-x')\dx'\dx
      \\&=
      \lim_{\epsilon\searrow 0}\bigg\{\frac{1}{2}
      \iint\limits_{O_\epsilon}
      [u(x)-u(x')]
      |\sigma(x)|^{2-p}|\sigma(x)\cdot (x-x')|^{p-2}[\sigma(x)\cdot (x-x')]\Wd^p(x-x')\dx'\dx
      \\&
      \phantom{\lim_{\epsilon\searrow 0}\bigg\{}+\frac{1}{2}
      \iint\limits_{O_\epsilon}
      [u(x)-u(x')]
      |\sigma(x')|^{2-p}|\sigma(x')\cdot (x-x')|^{p-2}[\sigma(x')\cdot (x-x')]\Wd^p(x-x')\dx'\dx\bigg\}
      \\&=
      -\lim_{\epsilon\searrow 0}\bigg\{
      \int_{\O}u(x)\underbrace{\int_{B(0,\delta)\setminus B(0,\epsilon)} 
      |\sigma(x)|^{2-p}|\sigma(x)\cdot z|^{p-2}[\sigma(x)\cdot z]\Wd^p(z)\,\mathrm{d}z}_{=0}\dx
      \\&\phantom{=}
      \phantom{\lim_{\epsilon\searrow 0}\bigg\{}+
      \int_{\O}u(x)\int_{B(0,\delta)\setminus B(0,\epsilon)} 
      |\sigma(x+z)|^{2-p}|\sigma(x+z)\cdot z|^{p-2}[\sigma(x+z)\cdot z]\Wd^p(z)\,\mathrm{d}z\dx\bigg\}
      \\&=
      -\lim_{\epsilon\searrow 0}
      \int_{\O}u(x) J_{p,\delta,\epsilon}[\sigma](x)\dx,
    \end{aligned}
  \]
  where \(O_\epsilon = \{\, (x,x')\in\Od\times\Od \colon |x-x'|>\epsilon \,\}\), and \(z=x'-x\). The steps rely primarily on (anti-)symmetries of the terms, including the radial symmetry of \(\Wd(\cdot)\), and Fubini's theorem.
  Finally, we can move the limit under the integral sign owing to the assumed uniform boundedness and Lebesgue's dominated convergence theorem.
\end{proof}  

For \(p=2\) the nonlinearity in \(\ddot{F}_{\delta}\) disappears, and \(\|\lim_{\delta\searrow 0}\lim_{\epsilon\searrow 0} J_{2,\delta,\epsilon}[\sigma] - \Div\sigma\|_{L^2(\O;\Rn)}=0\), \(\forall \sigma \in C^2_c(\Rn;\Rn)\), and small \(\delta>0\),
see for example~\cite[Proposition 6.1]{evgrafov2021dual}.
We proceed with the analysis of the two remaining cases \(p>2\) and \(1<p<2\), starting with a simpler case \(p>2\).
In particular, in both cases we will show that \(J_{p,\delta,\epsilon}[\sigma]\) is bounded and converges pointwise to \(\Div\sigma\) for smooth enough \(\sigma\), therefore Lebesgue dominated convergence theorem yields the desired conclusion.  In other words, in the remainder of the paper we will check that the prerequisites of Proposition~\ref{prop:rstar1} hold for \(\sigma \in C^2_c(\Rn;\Rn)\), which in turn is sufficient to conclude that Proposition~\ref{prop:Fconsist} holds.

\subsection{Analysis of \(J_{p,\delta,\epsilon}[\sigma](\cdot)\) for \(p>2\)}

We begin by establishing the uniform boundedness of \(J_{p,\delta,\epsilon}[\sigma](\cdot)\).

\begin{proposition}\label{prop:i_lip}
  Assume that \(\sigma\in C^1_c(\Rn;\Rn)\) and \(s\in \Sn\).
  Then \(j_{p,\sigma,s}\) is Lipschitz continuous with constant
  \(L_{p,\sigma} = (2p-3) \|\nabla\sigma\|_{L^\infty(\Rn;\R^{n\times n})}\).
\end{proposition}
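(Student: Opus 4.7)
The plan is to factor \(j_{p,\sigma,s}\) as a composition \(h\circ\sigma\), where \(h\colon\R^n\to\R\) is defined by \(h(v)=|v|^{2-p}|v\cdot s|^{p-2}(v\cdot s)\) for \(v\ne 0\) and \(h(0)=0\). Once I show that \(h\) is Lipschitz on \(\R^n\) with constant \(2p-3\), the proposition follows immediately from the chain of inequalities
\[
  |j_{p,\sigma,s}(x_1)-j_{p,\sigma,s}(x_2)|=|h(\sigma(x_1))-h(\sigma(x_2))|\leq (2p-3)|\sigma(x_1)-\sigma(x_2)|\leq (2p-3)\|\nabla\sigma\|_{L^\infty}|x_1-x_2|,
\]
using that \(\sigma\in C^1_c\) is itself Lipschitz with constant \(\|\nabla\sigma\|_{L^\infty}\).

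To analyze \(h\), I would rewrite it in polar form as \(h(v)=|v|\cdot|e\cdot s|^{p-2}(e\cdot s)\) with \(e=v/|v|\), from which the a priori bound \(|h(v)|\leq|v|\) is immediate (since \(|e\cdot s|\leq 1\) and \(p>2\)); this bound also delivers continuity of \(h\) at the origin. On \(\R^n\setminus\{0\}\) I would then compute \(\nabla h\) by the product rule, using \(\nabla|v|=e\) and \(\nabla(e\cdot s)=|v|^{-1}[s-(e\cdot s)e]\); the \(|v|\)-factors cancel and a short algebraic simplification yields
\[
  \nabla h(v)=|e\cdot s|^{p-2}\bigl[(p-1)s-(p-2)(e\cdot s)e\bigr].
\]
A crude triangle inequality combined with \(|e\cdot s|\leq 1\) and \(p>2\) then gives \(|\nabla h(v)|\leq(p-1)|e\cdot s|^{p-2}+(p-2)|e\cdot s|^{p-1}\leq(p-1)+(p-2)=2p-3\).

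The main obstacle is promoting this pointwise gradient bound on \(\R^n\setminus\{0\}\) to a global Lipschitz estimate, because \(h\) is generally not differentiable at the origin (its directional derivatives depend on the direction of approach). I would split into cases depending on the segment \([v_1,v_2]\). If this segment avoids the origin, \(h\) is \(C^1\) on it and the fundamental theorem of calculus together with the gradient bound yields \(|h(v_1)-h(v_2)|\leq(2p-3)|v_1-v_2|\). If the segment passes through (or ends at) the origin, then \(v_2=-\lambda v_1\) for some \(\lambda\geq 0\) (or \(v_1=0\)), so \(|v_1-v_2|=|v_1|+|v_2|\); applying the triangle inequality and the a priori bound \(|h(v_i)|\leq|v_i|\) gives
\[
  |h(v_1)-h(v_2)|\leq|v_1|+|v_2|=|v_1-v_2|\leq(2p-3)|v_1-v_2|,
\]
where the last inequality uses \(2p-3\geq 1\) for \(p\geq 2\). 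This completes the verification that \(h\) is globally Lipschitz with constant \(2p-3\), and the proposition follows.
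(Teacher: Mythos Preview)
Your proof is correct and follows essentially the same strategy as the paper: compute the gradient of the map away from the singularity, bound it by \(2p-3\), and handle the singular set \(\{\sigma=0\}\) separately via the crude estimate \(|j_{p,\sigma,s}|\leq|\sigma|\). The organization differs slightly: you factor \(j_{p,\sigma,s}=h\circ\sigma\) and prove \(h\colon\R^n\to\R\) is Lipschitz, so your case split is on whether the straight segment \([\sigma(x_1),\sigma(x_2)]\subset\R^n\) meets the origin, whereas the paper works directly with the composite along the curve \(t\mapsto\sigma(y_0+t(y_1-y_0))\) and splits on whether \(\sigma\) vanishes along that curve. Your factorization is cleaner and reusable (it isolates a purely algebraic fact about \(h\) from the regularity of \(\sigma\)); the paper's version, on the other hand, displays the derivative in a form that is reused verbatim later in the consistency analysis. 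Both routes yield the identical constant \(2p-3\).
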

\begin{proof}
  Let \(y_0,y_1 \in \Rn\) be arbitrary, and let \(y_t = y_0+t(y_1-y_0)\), 
  \(t\in [0,1]\). We consider two cases.

  {\bfseries Case 1}: \(\sigma(y_t)\neq 0\), \(\forall t\in [0,1]\).
  Then \(j_{p,\sigma,s}\) is continuously differentiable in the vicinity of the line segment between \(y_1\) and \(y_0\).
  We can therefore apply Taylor's theorem:
  \begin{equation}\label{eq:taylor_i}
      \begin{aligned}
        j_{p,\sigma,s}(y_1) - j_{p,\sigma,s}(y_0)
        &=
        \int_{0}^1 
        \bigg\{
        (2-p)\left[
          |\sigma(y_t)|^{1-p}\frac{\sigma(y_t)^T}{|\sigma(y_t)|}\nabla\sigma(y_t) [y_1-y_0]
          \left|\sigma^T(y_t)s\right|^{p-2}
          \left[\sigma^T(y_t)s\right]      
        \right]
        \\&\phantom{+ \int_{0}^1} +(p-1)
        \left[
          |\sigma(y_t)|^{2-p} 
        \left|\sigma^T(y_t) s\right|^{p-2} s^T \nabla\sigma(y_t) [y_1-y_0]
        \right]
        \bigg\}\,\mathrm{d}t
        \\&=\int_{0}^1 
        \bigg\{
        (2-p)\left[
          \frac{\sigma(y_t)^T}{|\sigma(y_t)|}\nabla\sigma(y_t) [y_1-y_0]\right]
          \left|\frac{\sigma^T(y_t)}{|\sigma(y_t)|}s\right|^{p-2}
          \left[\frac{\sigma^T(y_t)}{|\sigma(y_t)|}s\right]      
        \\&\phantom{+ \int_{0}^1} +(p-1)
        \left|\frac{\sigma^T(y_t)}{|\sigma(y_t)|} s\right|^{p-2} 
        \left[s^T \nabla\sigma(y_t) [y_1-y_0]
        \right]\bigg\}\,\mathrm{d}t.
      \end{aligned}
    \end{equation}
  Consequently,
  \[\begin{aligned}
    \left|j_{p,\sigma,s}(y_1) - j_{p,\sigma,s}(y_0)\right| \leq L|y_1-y_0|,
  \end{aligned}\]
  where we can take
  \[\begin{aligned}
    L &= [(p-1)+|2-p|]\sup_{t\in[0,1]} |\nabla\sigma(y_t)|
    \leq (2p-3)\|\nabla\sigma\|_{L^\infty(\Rn;\R^{n\times n})}.
  \end{aligned}\]

  {\bfseries Case 2}: \(\sigma(y_{\hat{t}})=0\), for some \(\hat{t}\in [0,1]\).
  Then we apply Taylor's theorem to \(\sigma\) around \(y_{\hat{t}}\) to get the following estimate:
  \begin{equation}\label{eq:Taylor_i0}
    |j_{p,\sigma,s}(y_t)- j_{p,\sigma,s}(y_{\hat{t}})|
    =|j_{p,\sigma,s}(y_t)|
    \leq
    \left|\sigma(y_t)\right| =\left|\int_{\hat{t}}^{t}\nabla \sigma(y_\tau)[y_1-y_0]\right|\,\mathrm{d}\tau.    
  \end{equation}
  Consequently
  \[\begin{aligned}
    |j_{p,\sigma,s}(y_1)- j_{p,\sigma,s}(y_0)|
  &\leq |j_{p,\sigma,s}(y_1)- j_{p,\sigma,s}(y_{\hat{t}})|
  +|j_{p,\sigma,s}(y_0)- j_{p,\sigma,s}(y_{\hat{t}})|
  \\&\leq \sup_{t\in[0,1]} |\nabla\sigma(y_t)| [|1-\hat{t}|+|\hat{t}-0|]|y_1-y_0| \\&= \sup_{t\in[0,1]} |\nabla\sigma(y_t)||y_1-y_0|.
  \end{aligned}\qedhere\]
\end{proof}

\begin{corollary}
  Assume that \(\sigma\in C^1_c(\Rn;\Rn)\).
  Then 
  \begin{equation}\label{eq:Idelta_bound}
    |J_{p,\delta,\epsilon}[\sigma](x)| \leq K_{p,n}^{-1}L_{p,\sigma}, \qquad \forall 0<\epsilon<\delta, x\in \Rn.
  \end{equation}
\end{corollary}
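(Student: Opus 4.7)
The plan is to exploit the antisymmetry of the integrand in the variable \(z\) together with the Lipschitz estimate from Proposition~\ref{prop:i_lip}. Specifically, observe that for any fixed \(s\in\Sn\) the map \(s\mapsto j_{p,\sigma,s}(y)\) is odd, because the definition
\[
j_{p,\sigma,s}(y) = |\sigma(y)|^{2-p}|\sigma(y)\cdot s|^{p-2}[\sigma(y)\cdot s]
\]
contains one odd factor (the linear term \(\sigma(y)\cdot s\)) multiplied by the even factor \(|\sigma(y)\cdot s|^{p-2}\). Moreover, the annular domain \(B(0,\delta)\setminus B(0,\epsilon)\) and the measure \(\Wd^p(z)\,\mathrm{d}z\) are both invariant under \(z\mapsto -z\) by radial symmetry of \(\Wd\).

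First I would perform the substitution \(z\mapsto -z\) in \eqref{eq:I_sigma_delta}, using oddness of \(j_{p,\sigma,\cdot}\) in its direction argument, to obtain
\[
J_{p,\delta,\epsilon}[\sigma](x) = -\int_{B(0,\delta)\setminus B(0,\epsilon)} j_{p,\sigma,z/|z|}(x-z)|z|^{p-1}\Wd^p(z)\,\mathrm{d}z.
\]
Averaging this with the original definition gives the symmetric representation
\[
J_{p,\delta,\epsilon}[\sigma](x) = \frac{1}{2}\int_{B(0,\delta)\setminus B(0,\epsilon)} \bigl[j_{p,\sigma,z/|z|}(x+z) - j_{p,\sigma,z/|z|}(x-z)\bigr]|z|^{p-1}\Wd^p(z)\,\mathrm{d}z.
\]

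Next I would apply Proposition~\ref{prop:i_lip}, which states that for each fixed direction \(s\in\Sn\) the function \(j_{p,\sigma,s}\) is Lipschitz with constant \(L_{p,\sigma}\), to bound the finite difference as
\[
\bigl|j_{p,\sigma,z/|z|}(x+z)-j_{p,\sigma,z/|z|}(x-z)\bigr|\le L_{p,\sigma}\cdot 2|z|.
\]
Substituting back yields
\[
|J_{p,\delta,\epsilon}[\sigma](x)| \le L_{p,\sigma}\int_{B(0,\delta)\setminus B(0,\epsilon)}|z|^{p}\Wd^p(z)\,\mathrm{d}z \le L_{p,\sigma}\int_{\Rn}|z|^p\Wd^p(z)\,\mathrm{d}z = L_{p,\sigma} K_{p,n}^{-1},
\]
where the last equality is the normalization condition \eqref{eq:normalization}. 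The resulting bound is uniform in \(x\in\Rn\) and in \(0<\epsilon<\delta\), as required.

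There is no substantial obstacle here: the symmetrization trick is forced by the structure of \(J_{p,\delta,\epsilon}\), and both the oddness of \(j_{p,\sigma,s}\) in \(s\) and the Lipschitz estimate in its spatial argument are already in hand. The only point to double-check is that the domain of integration and weight are indeed even in \(z\), which is immediate from the radial nature of \(\Wd\) and the fact that \(B(0,\delta)\setminus B(0,\epsilon)\) is centered at the origin.
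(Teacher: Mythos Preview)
Your argument is correct and essentially the same as the paper's: both exploit the oddness of \(j_{p,\sigma,s}\) in \(s\) together with the radial symmetry of the weight to introduce a cancellation, then apply the Lipschitz bound from Proposition~\ref{prop:i_lip} and the normalization~\eqref{eq:normalization}. The only cosmetic difference is that the paper subtracts the integral of \(j_{p,\sigma,z/|z|}(x)\) (which vanishes by oddness) and bounds \(|j_{p,\sigma,z/|z|}(x+z)-j_{p,\sigma,z/|z|}(x)|\le L_{p,\sigma}|z|\), whereas you symmetrize via \(z\mapsto -z\) and bound the central difference by \(2L_{p,\sigma}|z|\); both routes yield the identical constant \(K_{p,n}^{-1}L_{p,\sigma}\).
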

\begin{proof}
  \[
    \begin{aligned}
  |J_{p,\delta,\epsilon}[\sigma](x)| &= 
  \left|\int_{B(0,\delta)\setminus B(0,\epsilon)}
  j_{p,\sigma,z/|z|}(x+z)|z|^{p-1}\Wd^p(z)\,\mathrm{d}z
  \right|
  \\&\leq  
  \bigg|\underbrace{\int_{B(0,\delta)\setminus B(0,\epsilon)}
  j_{p,\sigma,z/|z|}(x)|z|^{p-1}\Wd^p(z)\,\mathrm{d}z}_{=0}
  \bigg|\\
  &+\int_{B(0,\delta)\setminus B(0,\epsilon)}
  \underbrace{|j_{p,\sigma,z/|z|}(x+z)-j_{p,\sigma,z/|z|}(x)|}_{\leq L_{p,\sigma}|z|}|z|^{p-1}\Wd^p(z)\,\mathrm{d}z
  \\&\leq K_{p,n}^{-1}L_{p,\sigma}.
  \end{aligned}\qedhere\]
\end{proof}

We will now analyze the pointwise behaviour of \(J_{p,\delta,\epsilon}[\sigma]\).
\begin{proposition}\label{prop:consistency_pgt2}
  Assume that \(\sigma \in C^2_c(\Rn;\Rn)\).
  Then
  \begin{equation}\label{eq:cons_pgt2_lim}
    \lim_{\delta\searrow 0}\lim_{\epsilon\searrow 0} J_{p,\delta,\epsilon}[\sigma](x) = \Div\sigma(x), \qquad \text{for almost all \(x\in \Rn\)}.
  \end{equation}
\end{proposition}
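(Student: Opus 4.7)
The plan is to prove~\eqref{eq:cons_pgt2_lim} pointwise by splitting \(\Rn\) into \(A=\{x\colon \sigma(x)\neq 0\}\) and its complement, which require rather different arguments. On \(A\) the idea is a first-order Taylor expansion of the integrand \(j_{p,\sigma,z/|z|}(x+z)\) in \(z\) around \(z=0\): the constant piece vanishes by odd symmetry, the linear piece reproduces \(\Div\sigma(x)\) via Corollary~\ref{cor:crazytrace}, and the remainder is controlled using the Hölder regularity of \(\nabla j\). On the complement of \(A\), Stampacchia's theorem provides \(\nabla\sigma(x)=0\) for almost every \(x\) with \(\sigma(x)=0\), which reduces both sides of~\eqref{eq:cons_pgt2_lim} to zero in a straightforward manner.

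For \(x\in A\), continuity of \(\sigma\) gives \(r_0>0\) with \(|\sigma(y)|\ge c_0>0\) on \(B(x,r_0)\); we restrict to \(\delta<r_0\). The constant Taylor piece integrates to zero on the symmetric annulus \(B(0,\delta)\setminus B(0,\epsilon)\) because \(j_{p,\sigma,-s}(x)=-j_{p,\sigma,s}(x)\) and \(\Wd\) is radial. For the linear piece, the gradient formula computed in the proof of Proposition~\ref{prop:i_lip} simplifies, with \(e=\sigma(x)/|\sigma(x)|\), to
\[
\nabla_y j_{p,\sigma,s}(x)\cdot s = |e\cdot s|^{p-2}\,s^T\bigl[(p-1)I+(2-p)ee^T\bigr]\nabla\sigma(x)\,s.
\]
Substituting \(s=z/|z|\) and multiplying by \(|z|^{p-1}\) turns the linear contribution into precisely the integrand of~\eqref{eq:traceAV} with \(A=\nabla\sigma(x)\); Corollary~\ref{cor:crazytrace} then yields the limit \(\trace\nabla\sigma(x)=\Div\sigma(x)\) as \(\epsilon\searrow 0\). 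For the remainder, the scalar maps \(t\mapsto |t|^{p-2}\) and \(t\mapsto |t|^{p-2}t\) are \(\alpha\)-Hölder with \(\alpha=\min(p-2,1)>0\); combined with \(\sigma\in C^2\) bounded away from zero on \(B(x,r_0)\), this makes \(y\mapsto \nabla_y j_{p,\sigma,s}(y)\) uniformly \(\alpha\)-Hölder in \(y\) on \(B(x,r_0)\), uniformly in \(s\in\Sn\). The integral form of Taylor's remainder then gives \(|j_{p,\sigma,s}(x+z)-j_{p,\sigma,s}(x)-\nabla_y j_{p,\sigma,s}(x)\cdot z|\leq C|z|^{1+\alpha}\), hence an overall remainder bounded by \(C\int_{B(0,\delta)}|z|^{p+\alpha}\Wd^p(z)\,dz\leq C\delta^\alpha K_{p,n}^{-1}\), which vanishes as \(\delta\searrow 0\).

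For \(x\) with \(\sigma(x)=0\) and \(\nabla\sigma(x)=0\) --- a condition satisfied almost everywhere on \(\{\sigma=0\}\) by Stampacchia's theorem applied to each component --- the \(C^2\) Taylor expansion yields \(|\sigma(x+z)|\leq\tfrac{1}{2}\|\nabla^2\sigma\|_{L^\infty}|z|^2\). Combined with the elementary bound \(|j_{p,\sigma,s}|=|\sigma|^{2-p}|\sigma\cdot s|^{p-1}\leq|\sigma|\), this forces
\[
|J_{p,\delta,\epsilon}[\sigma](x)|\leq C\int_{B(0,\delta)}|z|^{p+1}\Wd^p(z)\,dz\leq C\delta K_{p,n}^{-1}\xrightarrow[\delta\searrow 0]{}0=\Div\sigma(x).
\]
The principal obstacle is the range \(p\in(2,3)\): here the second derivative of \(|t|^{p-2}t\) is singular at \(t=0\), so a standard \(C^2\) Taylor bound on \(j\) is unavailable. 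This is precisely why the plan relies on the integral-remainder form of Taylor's formula, which needs only Hölder continuity of \(\nabla j\) --- a property that is available for any \(p>2\) with exponent \(\alpha=\min(p-2,1)>0\).
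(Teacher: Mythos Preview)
Your proof is correct and follows essentially the same strategy as the paper's: the case split on \(\sigma(x)\neq 0\) versus \(\sigma(x)=0\), the first-order Taylor expansion of \(j_{p,\sigma,s}\) with the constant term vanishing by odd symmetry and the linear term handled via Corollary~\ref{cor:crazytrace}, and the \(O(|z|^2)\) bound on \(|\sigma(x+z)|\) when \(\sigma(x)=\nabla\sigma(x)=0\). The only notable differences are organizational: you package the remainder as a single \(\alpha\)-H\"older estimate on \(\nabla_y j\) (the paper treats the two gradient terms separately, one Lipschitz and one \(\min\{1,p-2\}\)-H\"older), and you dispose of the exceptional set \(\{\sigma=0,\ \nabla\sigma\neq 0\}\) via Stampacchia's theorem rather than the paper's implicit-function-theorem argument --- both equally valid routes to the same measure-zero conclusion.
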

\begin{proof}
  We consider three cases.

  {\bfseries Case 1}: \(\sigma(x)\neq 0\). 
  Let \(\hat{\delta}(x) = |\sigma(x)|/\|\nabla\sigma\|_{L^\infty(\Rn;\R^{n\times n})}\).
  Then for all \(0<\delta < \hat{\delta}(x)/2\) and for all  \(z 
  \in B(0,\delta)\) we have 
  \begin{equation}\label{eq:lbound_sigma}|\sigma(x+z)|\geq |\sigma(x)|-|z|\|\nabla\sigma\|_{L^\infty(\Rn;\R^{n\times n})} > |\sigma(x)|/2.\end{equation}
  We apply Taylor's theorem as in~\eqref{eq:taylor_i} with \(y_0 = x\) and \(y_1 = x+z\), let \(s = z/|z|\), and integrate each of the terms.
  For the constant term we get
  \begin{equation}\label{eq:Taylor0}
    \int_{B(0,\delta)\setminus B(0,\epsilon)}
    j_{p,\sigma,z/|z|}(x) |z|^{p-1}\Wd^p(z)\,\mathrm{d}z = 0,
  \end{equation}
  owing to the symmetry.
  For one of the first order terms we proceed as follows:
  \begin{equation}\label{eq:Taylor11}\begin{aligned}
    &\int\limits_{B(0,\delta)\setminus B(0,\epsilon)}\int_{0}^1
    \underbrace{\left[\frac{\sigma(x+tz)^T}{|\sigma(x+tz)|}\nabla\sigma(x+tz)\frac{z}{|z|}\right]}_{\text{Lipschitz}}
    \underbrace{\left|\frac{\sigma(x+tz)}{|\sigma(x+tz)|}\cdot\frac{z}{|z|}\right|^{p-2}
      \left[\frac{\sigma(x+tz)}{|\sigma(x+tz)|}\cdot\frac{z}{|z|}\right]}_{\text{Lipschitz}}
      \,\mathrm{d}t|z|^{p}\Wd^p(z)\,\mathrm{d}z
    \\&=
    \int_{0}^1\int\limits_{B(0,\delta)\setminus B(0,\epsilon)}
    \left[\frac{\sigma(x)^T}{|\sigma(x)|}\nabla\sigma(x)\frac{z}{|z|}\right]
      \left|\frac{\sigma(x)}{|\sigma(x)|}\cdot\frac{z}{|z|}\right|^{p-2}
      \left[\frac{\sigma(x)}{|\sigma(x)|}\cdot\frac{z}{|z|}\right]
      |z|^{p}\Wd^p(z)\,\mathrm{d}z\,\mathrm{d}t + O(\delta)
      \\&=
      \int\limits_{B(0,\delta)\setminus B(0,\epsilon)}
      \left[\frac{\sigma(x)^T}{|\sigma(x)|}\nabla\sigma(x)\cdot \frac{z}{|z|}\right]
        \left|\frac{\sigma(x)}{|\sigma(x)|}\cdot\frac{z}{|z|}\right|^{p-2}
        \left[\frac{\sigma(x)}{|\sigma(x)|}\cdot\frac{z}{|z|}\right]
        |z|^{p}\Wd^p(z)\,\mathrm{d}z + O(\delta),
      \end{aligned}\end{equation}
    because the integrand is a product of at least once continuously differentiable functions with respect to \(x\) in \(B(0,\delta)\).
    Similarly
    \begin{equation}\label{eq:Taylor12}\begin{aligned}
      &\int\limits_{B(0,\delta)\setminus B(0,\epsilon)}
      \int_{0}^1
      \underbrace{\left|\frac{\sigma(x+tz)}{|\sigma(x+tz)|}\cdot\frac{z}{|z|}\right|^{p-2}}_{\text{H{\"o}lder}}
      \underbrace{\left[\frac{z^T}{|z|}\nabla\sigma(x+tz)\frac{z}{|z|}\right]}_{\text{Lipschitz}}
      \,\mathrm{d}t
      |z|^{p}\Wd^p(z)\,\mathrm{d}z
      \\&=
      \int\limits_{B(0,\delta)\setminus B(0,\epsilon)}
      \left|\frac{\sigma(x)}{|\sigma(x)|}\cdot\frac{z}{|z|}\right|^{p-2}
      \left[\frac{z^T}{|z|}\nabla\sigma(x)\frac{z}{|z|}\right]
      |z|^{p}\Wd^p(z)\,\mathrm{d}z + O(\delta^{\min\{1,p-2\}}),  
      \end{aligned}\end{equation}
      because the integrand is a product of a \(\min\{1,p-2\}\)-H{\"o}lder continuous function and a once continuously differentiable one with respect to \(x\) in \(B(0,\delta)\).
    It remains to utilize Corollary~\ref{cor:crazytrace} with \(e = \frac{\sigma(x)}{|\sigma(x)|}\), \(A=\nabla\sigma(x)\), and let \(\delta\searrow 0\).

    {\bfseries Case 2}: \(\sigma(x)=0\), \(\nabla \sigma(x)=0\).
    By utilizing Taylor's formula of degree two, we improve~\eqref{eq:Taylor_i0} to
    \[|j_{p,\sigma,s}(x+z)|
    \leq
    \left|\sigma(x+z)\right| =
    \frac{1}{2}\left|\sum_{i,j}^n\int_{0}^{1}
    \partial^2_{ij}\sigma(x+tz)z_iz_j t^2\,\mathrm{d}t\right| = O(|z|^2),
    \]
    with constants depending only on the second order derivatives of \(\sigma\).
    After integration this yields the estimate
      \[\lim_{\delta\searrow 0}\lim_{\epsilon\searrow 0} J_{p,\delta,\epsilon}[\sigma](x)= \lim_{\delta\searrow 0} O(\delta) = 0.
      \]
      Therefore, in this case~\eqref{eq:cons_pgt2_lim} also holds.

      {\bfseries Case 3}: \(\sigma(x)=0\), \(\nabla \sigma(x)\neq 0\).
      Let \(G = \{\, x\in \Rn: \sigma(x)=0 \text{\ and\ }
      \nabla \sigma(x)\neq 0 \,\}\subseteq \supp \sigma\).
      Then \[G\subseteq 
      \cup_{i,j=1}^n\cup_{k\in \N} G_{ijk} = \cup_{i,j=1}^n \cup_{k\in \N} \left\{\, x\in \Rn: \sigma_i(x)=0 \text{\ and\ }
      \frac{1}{k}\leq \left|\frac{\partial \sigma_i}{\partial x_j}(x)\right|< \frac{1}{k-1} \,\right\}.
      \]
      Finally, at each point of \(G_{ijk}\), the implicit function theorem applies, allowing us to express \(x_j\) as a \(C^1\) function of the rest of coordinates with the derivative bounded by \(O(k \|\nabla\sigma(x)\|_{L^\infty(\Rn;\R^{n\times n})})\). The Lebesgue measure of the graph of a such a function is \(0\).
\end{proof}

\subsection{Analysis of \(J_{p,\delta,\epsilon}[\sigma]\) for \(1<p<2\)}

We begin with some additional notation.
In the proof of case 1 of Proposition~\ref{prop:consistency_pgt2}, we put 
\(\hat{\delta}(x) = |\sigma(x)|/\|\nabla\sigma\|_{L^\infty(\Rn;\R^{n\times n})}\)
and when needed considered \(0<\delta < \hat{\delta}(x)/2\) in order to have a 
uniform positive  lower bound~\eqref{eq:lbound_sigma} on \(|\sigma(x+z)|\), 
for all \(z\in B(0,\delta)\).
In the present case, we will also need a uniform lower bound on the absolute value of 
the inner product \(\sigma(x+z)\cdot s\), \(z\in B(0,\delta)\), \(s\in\Sn\).
With this in mind we estimate this quantity as
\begin{equation}\label{eq:lbdot}
  |\sigma(x+z)\cdot s| = \left|\sigma(x)\cdot s + s^T \int_0^1 \nabla \sigma(x+tz)z\,\mathrm{d}t\right| \geq |\sigma(x)\cdot s| - |z|\|\nabla \sigma\|_{L^\infty(\Rn;\R^{n\times n})},
\end{equation}
and define
\begin{equation}\label{eq:S}\begin{aligned}
  S^+(x,r) &= \{\, s\in \Sn \colon |\sigma(x)\cdot s| > 2r\,\|\nabla \sigma\|_{L^\infty(\Rn;\R^{n\times n})} \,\},\\
  S^-(x,r) &= \Sn \setminus S^+(x,r),\\
  \hat{r}(x,s) &= \sup\{\, r\geq 0 \colon s\in S^+(x,r) \,\} = \frac{|\sigma(x)\cdot s|}{2\|\nabla \sigma\|_{L^\infty(\Rn;\R^{n\times n})}},\\
  O^+_{\delta,\epsilon}(x) &= \{\, z \in \Rn \colon \epsilon\leq |z|<\delta,
  z/|z|\in S^+(x,|z|) \,\}\\
  &= \{\, z \in \Rn \colon \epsilon\leq |z|<\min\{\delta, \hat{r}(x,z/|z|)\} \,\}, \quad\text{and}\\
  O^-_{\delta,\epsilon}(x) &= \{\, z \in \Rn \colon \epsilon\leq |z|<\delta,
  z/|z|\in S^-(x,|z|) \,\}.
\end{aligned}\end{equation}
Note that \(0\leq \hat{r}(x,s)\leq \hat{\delta}(x)/2\), for all \(x\in \Rn\) and \(s\in \Sn\).
In particular, \(O^+_{\delta,\epsilon}(x) \subset B(0,\hat{\delta}(x)/2)\).
We also note that \(O^+_{\delta,\epsilon}(x)  = -O^+_{\delta,\epsilon}(x)\).
The general idea is that \(O^-_{\delta,\epsilon}(x)\) is ``small,'' for small \(\delta\), while on \(O^+_{\delta,\epsilon}(x)\) Taylor's theorem applies to our integrand.

Let us start by establishing the uniform boundedness of \(J_{p,\delta,\epsilon}[\sigma]\).
\begin{lemma}\label{lem:Lip_plt2}
  Let \(\sigma\in C^1_c(\Rn;\Rn)\).
  Then there is \(L=L(\sigma,p,n)>0\) such that for all \(x\in \Rn\) and \(r> 0\) we have the inequality
  \begin{equation}\label{eq:Lip_plt2}
     \left|\int_{\Sn} j_{p,\sigma,s}(x+rs)\,\mathrm{d}s\right| \leq L r.
  \end{equation}
\end{lemma}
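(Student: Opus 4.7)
The plan is to exploit the oddness identity $j_{p,\sigma,-s}(y)=-j_{p,\sigma,s}(y)$ to rewrite
\[2\int_{\Sn} j_{p,\sigma,s}(x+rs)\,\mathrm{d}s = \int_{\Sn}[j_{p,\sigma,s}(x+rs)-j_{p,\sigma,s}(x-rs)]\,\mathrm{d}s,\]
and then to dispatch the estimate via a case analysis on the size of $|\sigma(x)|$ relative to $rM$, where $M:=\|\nabla\sigma\|_{L^\infty(\Rn;\R^{n\times n})}$. The pointwise bound $|j_{p,\sigma,s}(y)|\leq|\sigma(y)|$, which follows from $|\sigma\cdot s|^{p-1}\leq|\sigma|^{p-1}$, handles $r\geq 1$ trivially via $|{\textstyle\int}|\leq|\Sn|\|\sigma\|_{L^\infty}\cdot r$, so the work concerns $r\in(0,1)$.

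In the degenerate case $|\sigma(x)|\leq 8rM$, the Lipschitz bound on $\sigma$ gives $|\sigma(y)|\leq 9rM$ on $B(x,r)$ and the desired estimate follows from the pointwise bound on $j_{p,\sigma,s}$ alone. The substantive case is $|\sigma(x)|>8rM$. Setting $\beta:=4rM/|\sigma(x)|<1/2$, I would split $\Sn$ into $S^+:=\{s:|\sigma(x)\cdot s|>4rM\}$ (equivalently $|\hat\sigma(x)\cdot s|>\beta$, with $\hat\sigma:=\sigma/|\sigma|$) and its complement $S^-$, in analogy with~\eqref{eq:S}. On $S^-$, the bound $|\sigma(\cdot)\cdot s|\leq 5rM$ along the segment yields $|j_{p,\sigma,s}(y)|\leq\|\sigma\|_{L^\infty}^{2-p}(5rM)^{p-1}$; combined with the measure estimate $|S^-|\lesssim\beta$ from spherical coordinates aligned with $\sigma(x)$, the product $|S^-|\cdot\sup|j|$ telescopes to $O((rM)^p/|\sigma(x)|^{p-1})$, which collapses to $O(Mr)$ after invoking $|\sigma(x)|>8rM$.

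On $S^+$, the quantity $|\sigma(\cdot)\cdot s|$ stays above $3rM$ along the segment, so $j_{p,\sigma,s}$ is $C^1$ there and the Taylor-type identity~\eqref{eq:taylor_i} with $y_0=x-rs$, $y_1=x+rs$, $y_t=x+(2t-1)rs$ yields
\[|j_{p,\sigma,s}(x+rs)-j_{p,\sigma,s}(x-rs)|\leq 2rM\bigg[(2-p)+(p-1)\int_0^1|\hat\sigma(y_t)\cdot s|^{p-2}\,\mathrm{d}t\bigg].\]
Integrating over $S^+$ and swapping the order of the $s$- and $t$-integrations reduces matters to bounding $\int_{S^+}|\hat\sigma(y_t)\cdot s|^{p-2}\,\mathrm{d}s$ uniformly in $t\in[0,1]$.

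This last bound is the main obstacle and the central novelty of the sub-quadratic regime: the integrand is singular where $\hat\sigma(y_t)\cdot s\to 0$, and the point $y_t$ itself depends on $s$. I would handle it by first establishing $|\hat\sigma(y_t)-\hat\sigma(x)|\leq 8rM/(7|\sigma(x)|)=2\beta/7$, using the elementary bound $|\nabla\hat\sigma|\leq M/|\sigma|$ (valid where $\sigma\neq 0$) together with $|\sigma(\cdot)|\geq 7|\sigma(x)|/8$ along the segment, guaranteed by the case assumption. This yields $|\hat\sigma(y_t)\cdot s|\geq|\hat\sigma(x)\cdot s|-2\beta/7>5\beta/7>0$ on $S^+$, so passing to spherical coordinates aligned with $\hat\sigma(x)$ (where $|\hat\sigma(x)\cdot s|=|s_n|=:u$) reduces the integral to the one-dimensional estimate $\int_\beta^1(u-2\beta/7)^{p-2}(1-u^2)^{(n-3)/2}\,\mathrm{d}u$. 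After the substitution $v=u-2\beta/7$ this is dominated by $\int_0^1 v^{p-2}\,\mathrm{d}v \leq 1/(p-1)$ up to a constant absorbing the angular weight (integrable even in the borderline case $n=2$), which crucially is independent of $r$ and $x$. Substituting back gives the $O(r)$ bound on $S^+$ and closes the argument with $L=L(\|\sigma\|_{L^\infty},\|\nabla\sigma\|_{L^\infty},p,n)$.
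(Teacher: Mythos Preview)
Your argument is correct and takes a genuinely different route from the paper's proof. Both proofs exploit the oddness \(\int_{\Sn} j_{p,\sigma,s}(x)\,\mathrm{d}s=0\), but you symmetrize to \(j_{p,\sigma,s}(x+rs)-j_{p,\sigma,s}(x-rs)\) and then do a clean dichotomy on \(|\sigma(x)|\) versus \(rM\), whereas the paper subtracts \(j_{p,\sigma,s}(x)\), splits \(\Sn\) according to whether \(\sigma\) vanishes along \([x,x+rs]\), and on the nonvanishing set decomposes the difference \(ab-cd\) into four terms \(E_1,\dots,E_4\), each handled separately via concavity/convexity of \(t\mapsto |t|^{p-2}t\) and \(t\mapsto t^{2-p}\). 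Your treatment of the singular factor \(|\hat\sigma(y_t)\cdot s|^{p-2}\) on \(S^+\) is the genuinely new step: rather than the paper's piecewise concavity bounds, you control the drift \(|\hat\sigma(y_t)-\hat\sigma(x)|\leq 2\beta/7\) uniformly in \(t\) and \(s\), reduce to a one-dimensional integral with a shifted singularity, and get a uniform bound by a change of variable. This is shorter and arguably more transparent than the paper's \(E_2\) analysis, at the cost of slightly less explicit constants.

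One minor slip: on \(S^-\) you bound \(|j_{p,\sigma,s}(y)|\leq \|\sigma\|_{L^\infty}^{2-p}(5rM)^{p-1}\) but then claim the product with \(|S^-|\lesssim\beta\) is \(O((rM)^p/|\sigma(x)|^{p-1})\). As written this does not follow, since \(\|\sigma\|_{L^\infty}^{2-p}\) is not controlled by \(|\sigma(x)|^{2-p}\). The fix is immediate: in the nondegenerate case \(|\sigma(y)|\leq 9|\sigma(x)|/8\) on the segment, so \(|\sigma(y)|^{2-p}\leq (9/8)^{2-p}|\sigma(x)|^{2-p}\) and the claimed bound \((rM)^p/|\sigma(x)|^{p-1}=O(rM)\) follows; alternatively the cruder \(|j|\leq|\sigma(y)|\leq 9|\sigma(x)|/8\) together with \(|S^-|\lesssim rM/|\sigma(x)|\) gives \(O(rM)\) directly.
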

\begin{proof}
  We begin by recalling that
  \[\begin{aligned}
  \int_{\Sn} j_{p,\sigma,s}(x)\,\mathrm{d}s &= 0,
  \end{aligned}\]
  meaning that 
  \[\begin{aligned}
  \int_{\Sn} j_{p,\sigma,s}(x+rs)\,\mathrm{d}s &= \int_{\Sn} [j_{p,\sigma,s}(x+rs)-j_{p,\sigma,s}(x)]\,\mathrm{d}s.
  \end{aligned}\]
  We then split \(\Sn\) into two complementary, not necessarily nonempty parts:
  \begin{equation*}
    \begin{aligned}
    N &= \{\, s\in \Sn \colon |\sigma(x+ts)|>0, \forall t\in [0,1]\,\},\qquad\text{and} \\
    Z &= \{\, s\in \Sn \colon \exists \hat{t}_s \in [0,1] \text{\ such that\ } \sigma(x+\hat{t}_s s)=0\,\}.
    \end{aligned}
  \end{equation*}
  Arguing as in case 2 in the proof of Proposition~\ref{prop:i_lip}, we get the following estimate
  \begin{equation}\label{eq:I0}
    \int_{Z} [j_{p,\sigma,s}(x+rs)-j_{p,\sigma,s}(x)]\,\mathrm{d}s
    \leq |Z| \|\nabla \sigma\|_{L^\infty(\Rn;\Rn)}r.
  \end{equation}
  Thus we will now focus on the estimate on \(N\).
  With this in mind we further split \(N\) into
  \begin{equation*}
    \begin{aligned}
    N_1 &= \{\, s\in N \colon |\sigma(x+rs)|> |\sigma(x)| \},\qquad\text{and} \\
    N_2 &= \{\, s\in N \colon |\sigma(x+rs)|\leq |\sigma(x)|\,\}.
    \end{aligned}
  \end{equation*}
  Note that the integrand has the structure \(ab-cd\) with 
  \(a=|\sigma(x+rs)|^{2-p}\), \(b=|\sigma(x+rs)\cdot s|^{p-2}[\sigma(x+rs)\cdot s]\),
  \(c=|\sigma(x)|^{2-p}\), and \(d=|\sigma(x)\cdot s|^{p-2}[\sigma(x)\cdot s]\).
  On \(N_1\) we use the formula
  \(ab-cd = (a-c)(b-d) + c(b-d) + (a-c)d\), whereas on \(N_2\) we utilize the equality
  \(ab-cd = c(b-d) + (a-c)b\).
  We can now write
  \begin{equation*}\begin{aligned}
    &\int_{N} [j_{p,\sigma,s}(x+rs)-j_{p,\sigma,s}(x)]\,\mathrm{d}s
    \\&= 
    \int_{N_1} [|\sigma(x+rs)|^{2-p}-|\sigma(x)|^{2-p}|]\{|\sigma(x+rs)\cdot s|^{p-2}[\sigma(x+rs)\cdot s]-|\sigma(x)\cdot s|^{p-2}[\sigma(x)\cdot s]\}\,\mathrm{d}s
    \\&\quad+
    |\sigma(x)|^{2-p}\int_{N} \{|\sigma(x+rs)\cdot s|^{p-2}[\sigma(x+rs)\cdot s]-|\sigma(x)\cdot s|^{p-2}[\sigma(x)\cdot s]\}\,\mathrm{d}s
    \\&\quad+
    \int_{N_1} [|\sigma(x+rs)|^{2-p}-|\sigma(x)|^{2-p}]|\sigma(x)\cdot s|^{p-2}[\sigma(x)\cdot s]\,\mathrm{d}s
    \\&\quad+
    \int_{N_2} [|\sigma(x+rs)|^{2-p}-|\sigma(x)|^{2-p}]|\sigma(x+rs)\cdot s|^{p-2}[\sigma(x+rs)\cdot s]\,\mathrm{d}s    
    \\&= E_1+E_2+E_3+E_4.
  \end{aligned}\end{equation*}
  We will now estimate each of the terms \(E_1\), \(E_2\), \(E_3\), and \(E_4\).

  \smallskip{\bfseries Esimate for \(E_1\).}
  Utilizing the H{\"o}lder continuity of the function \(\R\ni t \mapsto |t|^{\alpha-1}t\)
  for \(\alpha \in (0,1)\) twice, first for $\alpha=2-p$, then for $\alpha=p-1$, we can estimate \(E_1\) using Taylor's theorem:
  \begin{equation}\label{eq:I1}\begin{aligned}
    |E_1| &\leq \int_{N_1} \big||\sigma(x+rs)|-|\sigma(x)|\big|^{2-p}\big|[\sigma(x+rs)\cdot s]-[\sigma(x)\cdot s]\big|^{p-1}\,\mathrm{d}s
    \\&\leq |N_1|\|\nabla\sigma\|^{(2-p)+(p-1)}_{L^\infty(\Rn;\Rn)}r^{(2-p)+(p-1)}
    =|N_1|\|\nabla\sigma\|_{L^\infty(\Rn;\Rn)}r.
  \end{aligned}\end{equation}

  \smallskip{\bfseries Esimate for \(E_2\).}
  To estimate \(E_2\) we can for example consider the sets
  \[\begin{aligned}
    S_1 &= \{\, s\in N \colon \sigma(x+rs)\cdot s >  |\sigma(x)\cdot s| \,\},\\
    S_2 &= \{\, s\in N \colon \sigma(x+rs)\cdot s < -|\sigma(x)\cdot s| \,\},\\
    S_3 &= \{\, s\in N \colon |\sigma(x+rs)\cdot s| \leq |\sigma(x)\cdot s| \,\}, \qquad\text{and}\\
    T &= \{\, s\in N \colon \sign[\sigma(x+rs)\cdot s] \neq \sign[\sigma(x)\cdot s]\,\}.
  \end{aligned}\]
  Additionally, we recall that a differentiable concave function $\phi:\R \to \R$ satisfies
  \begin{align*}
      \phi(t_1)-\phi(t_0) \leq \phi'(t_0)[t_1-t_0], \quad \forall (t_0,t_1) \in \R \times \R,
  \end{align*}
  and that the reverse inequality holds assuming convexity. Utilizing the monotonicity and concavity of the function \([0,\infty)\ni t\mapsto |t|^{p-2}t\) on \(S_{1}\), we have the estimate
  \begin{equation}\label{eq:est_u1}\begin{aligned}
    0\leq |\sigma(x+rs)\cdot s|^{p-2}[\sigma(x+rs)\cdot s]-|\sigma(x)\cdot s|^{p-1}
    &\leq
    (p-1)|\sigma(x)\cdot s|^{p-2}[\sigma(x+ rs)\cdot s-|\sigma(x)\cdot s|]
    \\&\leq
    (p-1)|\sigma(x)\cdot s|^{p-2}[\sigma(x+ rs)\cdot s-\sigma(x)\cdot s]
    \\&\leq (p-1)|\sigma(x)\cdot s|^{p-2} \|\nabla\sigma\|_{L^\infty(\Rn;\Rn)}r,
  \end{aligned}\end{equation}
  which is valid everywhere on \(S_1\) outside of the null-set \(\{\, s \in \Sn \colon \sigma(x)\cdot s = 0 \,\}\), which will become irrelevant after integration.
  Similarly, utilizing the monotonicity and convexity of the function \((-\infty,0]\ni t\mapsto |t|^{p-2}t\), on \(S_{2}\) we have the estimate
  \begin{equation}\label{eq:est_u2}\begin{aligned}
    0\geq |\sigma(x+rs)\cdot s|^{p-2}[\sigma(x+rs)\cdot s]+|\sigma(x)\cdot s|^{p-1}
    &\geq
    (p-1)|\sigma(x)\cdot s|^{p-2}[\sigma(x+ rs)\cdot s+|\sigma(x)\cdot s|]
    \\&\geq
    (p-1)|\sigma(x)\cdot s|^{p-2}[\sigma(x+ rs)\cdot s-\sigma(x)\cdot s]
    \\&\geq -(p-1)|\sigma(x)\cdot s|^{p-2} \|\nabla\sigma\|_{L^\infty(\Rn;\Rn)}r,
  \end{aligned}\end{equation}
  with the same comment as for the estimate for \(S_1\).
  We now focus on \(T\).  We note the bounds due to Taylor's theorem:
  \[
    \sigma(x)\cdot s - r\|\nabla\sigma\|_{L^\infty(\Rn;\Rn)}\leq \sigma(x+rs)\cdot s \leq \sigma(x)\cdot s + r\|\nabla\sigma\|_{L^\infty(\Rn;\Rn)}.
  \]
  Thus the sign difference between \(\sigma(x)\cdot s\) and \(\sigma(x+rs)\cdot s\) may only occur when \(s\) belongs to the spherical segment \(S^-(x,r)\), see~\eqref{eq:S}. Hence $T \subset S^-(x,r)$,
  whose measure is bounded by 
  \begin{equation}\label{eq:That}
    |S^-(x,r)| \leq 2\pi^{-1}|\mathbb{S}^{n-2}||\sigma(x)|^{-1}\|\nabla\sigma\|_{L^\infty(\Rn;\Rn)} r.
  \end{equation}

  Armed with~\eqref{eq:est_u1}, \eqref{eq:est_u2}, and~\eqref{eq:That} we can estimate the contribution to \(E_2\) coming from \(S_1\cup S_2\) by
  \begin{equation}\label{eq:I2_1}\begin{aligned}
    &|\sigma(x)|^{2-p}\left|\int_{S_1\cup S_2} \{|\sigma(x+rs)\cdot s|^{p-2}[\sigma(x+rs)\cdot s]-|\sigma(x)\cdot s|^{p-2}[\sigma(x)\cdot s]\}\,\mathrm{d}s\right|
    \\
    &\leq|\sigma(x)|^{2-p}\int_{S_1\cup S_2} \left||\sigma(x+rs)\cdot s|^{p-2}[\sigma(x+rs)\cdot s]-\sign[\sigma(x+rs)\cdot s]|\sigma(x)\cdot s|^{p-1}\right|\,\mathrm{d}s
    \\
    &\quad+|\sigma(x)|^{2-p}\int_{S_1\cup S_2}\left|\sign[\sigma(x+rs)\cdot s]|\sigma(x)\cdot s|^{p-1} -\sign[\sigma(x)\cdot s]|\sigma(x)\cdot s|^{p-1} \right|\,\mathrm{d}s
    \\
    &\leq
    |\sigma(x)|^{2-p}\int_{S_1\cup S_2} \big||\sigma(x+rs)\cdot s|^{p-2}[\sigma(x+rs)\cdot s]-\sign[\sigma(x+rs)\cdot s]|\sigma(x)\cdot s|^{p-1}\big|\,\mathrm{d}s\\
    &\quad+2|\sigma(x)|^{2-p}\int_{(S_1\cup S_2)\cap T} |\sigma(x)\cdot s|^{p-1}\,\mathrm{d}s
    \\
    &\leq
    (p-1)\|\nabla\sigma\|_{L^\infty(\Rn;\Rn)} r \int_{S_1\cup S_2}\left|\frac{\sigma(x)}{|\sigma(x)|}\cdot s\right|^{p-2}\,\mathrm{d}s 
    + 2|\sigma(x)||T| \sup_{s\in (S_1\cup S_2)\cap T}\left|\frac{\sigma(x)}{|\sigma(x)|}\cdot s\right|^{p-1} \\
    &\leq [(p-1) |\Sn|K_{p-2,n} + 4\pi^{-1}|\mathbb{S}^{n-2}|]\|\nabla\sigma\|_{L^\infty(\Rn;\Rn)} r.
  \end{aligned}\end{equation}

  Let us finally focus on contributions to \(E_2\) from \(S_3 = (S_3\cap S^-(x,r))\cup  (S_3\cap S^+(x,r))\).
  On \(S_3\cap S^-(x,r)\), utilizing the monotonicity of the function \((-\infty,0]\ni t\mapsto t^{p-1}\) we can simply write
  \begin{equation}\label{eq:I2_2}\begin{aligned}
    &|\sigma(x)|^{p-2}\left|\int_{S_3\cap S^-(x,r)} \{|\sigma(x+rs)\cdot s|^{p-2}[\sigma(x+rs)\cdot s]-|\sigma(x)\cdot s|^{p-2}[\sigma(x)\cdot s]\}\,\mathrm{d}s\right|
    \\
    &\leq
    2|\sigma(x)|^{2-p}\int_{S_3\cap S^-(x,r)} |\sigma(x)\cdot s|^{p-1}\,\mathrm{d}s\\
    &\leq 4\pi^{-1}|\mathbb{S}^{n-2}|\|\nabla\sigma\|_{L^\infty(\Rn;\Rn)} r.
  \end{aligned}\end{equation}

  Let \(s\in S^+(x,r)\) be  an arbitrary vector; in particular, \(0<r < \hat{r}(x,s)\).
  Let further \(t\in [0,1]\) and \(\tilde{r}\in [r,\hat{r}(x,s))\) be arbitrary.
  We note the inclusion \(s\in S^+(x,\tilde{r})\), which follows directly from the definition~\eqref{eq:S}, as well as the trivial inequality \(rt < \hat{r}(x,s)\).
  Consequently, utilizing~\eqref{eq:lbdot} we get the inequality
  \[|\sigma(x+rts)\cdot s|\geq \underbrace{|\sigma(x)\cdot s|}_{> 2\tilde{r}\|\nabla \sigma\|_{L^\infty(\Rn;\R^{n\times n})}}- rt\|\nabla \sigma\|_{L^\infty(\Rn;\R^{n\times n})}.\]
  By taking the supremum of the right hand side over \( \tilde{r}\in[r,\hat{r}(x,s))\)
  we arrive at the estimate
  \begin{equation}\label{eq:lbdot12}|\sigma(x+rts)\cdot s|
  \geq \hat{r}(x,s)\|\nabla \sigma\|_{L^\infty(\Rn;\R^{n\times n})}
  = \frac{1}{2}|\sigma(x)\cdot s|>0.\end{equation}
  Thus on \(S_3\cap S^+(x,r)\) we can apply Taylor's theorem to the integrand of \(E_2\) to get
  \begin{equation}\label{eq:I2_3}\begin{aligned}
    &|\sigma(x)|^{2-p}\left|\int_{S_3\cap S^+(x,r)} \{|\sigma(x+rs)\cdot s|^{p-2}[\sigma(x+rs)\cdot s]-|\sigma(x)\cdot s|^{p-2}[\sigma(x)\cdot s]\}\,\mathrm{d}s\right|
    \\
    &\leq
    |\sigma(x)|^{2-p}(p-1)r\int_{S_3\cap S^+(x,r)} \int_{0}^{1}
    |\sigma(x+rts)\cdot s|^{p-2}|s^T \nabla\sigma(x+rts)\cdot s|\,\mathrm{d}t\,\mathrm{d}s
    \\
    &\leq
    2^{2-p}(p-1)\|\nabla\sigma\|_{L^\infty(\Rn;\Rn)}r
    \int_{S_3\cap S^+(x,r)} \left|\frac{\sigma(x)}{|\sigma(x)|}\cdot s\right|^{p-2}\,\mathrm{d}s
    \\
    &\leq 2^{2-p}(p-1) |\Sn| K_{p-2,n} \|\nabla\sigma\|_{L^\infty(\Rn;\Rn)}r.
  \end{aligned}
  \end{equation}
  It remains to sum up the inequalities~\eqref{eq:I2_1}, \eqref{eq:I2_2}, and 
  \eqref{eq:I2_3} to obtain the desired estimate of \(E_2\).

  \smallskip{\bfseries Esimate for \(E_3\).}
  On \(N_1\), we have the following estimate, utilizing the monotonocity and concavity of \([0,\infty)\ni t \mapsto t^{2-p}\) and Taylor's theorem:
  \begin{equation*}
    \begin{aligned}
      0\leq |\sigma(x+rs)|^{2-p}-|\sigma(x)|^{2-p} 
      &\leq 
      (2-p)|\sigma(x)|^{1-p}[|\sigma(x+rs)|-|\sigma(x)|]
      \\&\leq
      (2-p)|\sigma(x)|^{1-p} \|\nabla\sigma\|_{L^\infty(\Rn;\Rn)}r.
    \end{aligned}
  \end{equation*}
  After integration, this yields:
  \begin{equation}\label{eq:I3}
    \begin{aligned}
      |E_3|&\leq (2-p)\|\nabla\sigma\|_{L^\infty(\Rn;\Rn)}r\int_{N_1} |\sigma(x)|^{1-p}|\sigma(x)\cdot s|^{p-1}\,\mathrm{d}s\\
      &\leq (2-p)|N_1|\|\nabla\sigma\|_{L^\infty(\Rn;\Rn)}r.
    \end{aligned}
  \end{equation}

  \smallskip{\bfseries Esimate for \(E_4\).}
  We proceed in the same way as with \(E_3\):
  \begin{equation*}
    \begin{aligned}
      0\leq |\sigma(x)|^{2-p}-|\sigma(x+rs)|^{2-p} 
      &\leq 
      (2-p)|\sigma(x+rs)|^{1-p}[|\sigma(x)|-|\sigma(x+rs)|]
      \\&\leq
      (2-p)|\sigma(x+rs)|^{1-p} \|\nabla\sigma\|_{L^\infty(\Rn;\Rn)}r.
    \end{aligned}
  \end{equation*}
  After integration, this yields:
  \begin{equation}\label{eq:I4}
    \begin{aligned}
      |E_4|&\leq (2-p)\|\nabla\sigma\|_{L^\infty(\Rn;\Rn)}r\int_{N_2} |\sigma(x+rs)|^{1-p}|\sigma(x+rs)\cdot s|^{p-1}\,\mathrm{d}s\\
      &\leq (2-p)|N_2|\|\nabla\sigma\|_{L^\infty(\Rn;\Rn)}r.
    \end{aligned}
  \end{equation}

  Summing up the estimates \eqref{eq:I0}, \eqref{eq:I1},  \eqref{eq:I2_1},  \eqref{eq:I2_2},  \eqref{eq:I2_3},  \eqref{eq:I3},  and \eqref{eq:I4}, and collecting all the constants into \(L\) concludes the proof of the claim.
\end{proof}

\begin{corollary}
  Assume that \(\sigma\in C^1_c(\Rn;\Rn)\).
  Then we have the inequality
  \begin{equation}\label{eq:Idelta_bound_plt2}
    |J_{p,\delta,\epsilon}[\sigma](x)| \leq L(\sigma,p,n) |\Sn|^{-1} K_{p,n}^{-1}, \qquad \forall 0<\epsilon<\delta, x\in \Rn,
  \end{equation}
  where the constant \(L(\sigma,p,n)>0\) is as in Lemma~\ref{lem:Lip_plt2}.
\end{corollary}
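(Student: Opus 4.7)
The plan is to reduce the spatial integral defining $J_{p,\delta,\epsilon}[\sigma](x)$ to a one-dimensional radial integral and apply Lemma~\ref{lem:Lip_plt2} to the inner spherical integral. Specifically, I would pass to spherical coordinates $z = rs$ with $s \in \Sn$ and $r \in [\epsilon,\delta]$, giving $\mathrm{d}z = r^{n-1}\,\mathrm{d}r\,\mathrm{d}s$. Using the radial symmetry of $\Wd$ (so $\Wd^p(z)$ depends only on $r$) together with the fact that the factor $|z|^{p-1}$ pulls out as $r^{p-1}$, the integrand separates, and Fubini's theorem yields
\[
J_{p,\delta,\epsilon}[\sigma](x)
= \int_\epsilon^\delta r^{n-1}r^{p-1}\Wd^p(r)\,\bigg[\int_{\Sn} j_{p,\sigma,s}(x+rs)\,\mathrm{d}s\bigg]\,\mathrm{d}r.
\]

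Next, I would apply Lemma~\ref{lem:Lip_plt2} to the inner spherical integral, which gives the pointwise bound $\bigl|\int_{\Sn} j_{p,\sigma,s}(x+rs)\,\mathrm{d}s\bigr| \leq L(\sigma,p,n)\,r$. Inserting this estimate absorbs the extra factor of $r$ into the radial weight, leading to
\[
|J_{p,\delta,\epsilon}[\sigma](x)|
\leq L(\sigma,p,n)\int_\epsilon^\delta r^{n-1}r^{p}\Wd^p(r)\,\mathrm{d}r.
\]

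Finally, I would recognize the remaining radial integral as the radial representation of $\int_{\Rn} |z|^p\Wd^p(z)\,\mathrm{d}z$, modulo the factor $|\Sn|$. Indeed, by the normalization~\eqref{eq:normalization},
\[
|\Sn|\int_0^\infty r^{n-1}r^p\Wd^p(r)\,\mathrm{d}r = \int_{\Rn}|z|^p\Wd^p(z)\,\mathrm{d}z = K_{p,n}^{-1},
\]
so the radial integral over $[\epsilon,\delta]$ is bounded by $|\Sn|^{-1}K_{p,n}^{-1}$. Combining this with the previous display yields exactly~\eqref{eq:Idelta_bound_plt2}, uniformly in $x$ and in $0 < \epsilon < \delta$. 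There is no real obstacle here: the entire argument is a bookkeeping exercise relying on the radial separation of variables together with the already-established hard estimate of Lemma~\ref{lem:Lip_plt2}, which is where the actual analytic work was done.
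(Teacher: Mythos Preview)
Your proposal is correct and follows essentially the same approach as the paper: both pass to spherical coordinates, apply Lemma~\ref{lem:Lip_plt2} to bound the spherical integral by $L(\sigma,p,n)\,r$, and then use the normalization~\eqref{eq:normalization} to control the remaining radial integral by $|\Sn|^{-1}K_{p,n}^{-1}$.
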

\begin{proof}
  \[
    \begin{aligned}
  |J_{p,\delta,\epsilon}[\sigma](x)| &\leq 
  \int_{\epsilon}^{\delta}
  r^{n-1}\underbrace{\bigg|\int_{\Sn}
  |\sigma(x+rs)|^{2-p} |\sigma(x+rs)\cdot s|^{p-2}[\sigma(x+rs)\cdot s]\,\mathrm{d}s \bigg|}_{\leq L(\sigma,p,n)r} r^{p-1} \Wd^p(r)\,\mathrm{d}r
    \\&\leq
    L(\sigma,p,n) |\Sn|^{-1} K_{p,n}^{-1}.
  \end{aligned}\qedhere
  \]
\end{proof}

We can now proceed with the analysis of the pointwise behaviour of \(J_{p,\delta,\epsilon}[\sigma]\).

\begin{proposition}\label{prop:consistency_s}
  Let \(\sigma \in C^1_c(\Rn;\Rn)\), and let us fix \(x\in \Rn\) such that \(\sigma(x)\neq 0\).
  Then
  \begin{equation}\label{eq:cons_S}
    \lim_{\delta\searrow 0}\lim_{\epsilon\searrow 0} 
    \int_{O^+_{\delta,\epsilon}}\left|
    \int_0^1 
    \left| \frac{\sigma(x+tz)}{|\sigma(x+tz)|} \cdot \frac{z}{|z|}\right|^{p-2}\,\mathrm{d}t\,
    -
    \left| \frac{\sigma(x)}{|\sigma(x)|} \cdot \frac{z}{|z|}\right|^{p-2}\right|
    \,|z|^{p}\Wd^p(z)\,\mathrm{d}z = 0.
  \end{equation}
\end{proposition}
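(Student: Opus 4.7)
The plan is to reduce the proposition to a pointwise Taylor-style bound for the integrand, then to carry out the spherical integration carefully, exploiting the geometric restriction that defines $O^+_{\delta,\epsilon}(x)$ to tame the singularity $|\sigma(x)\cdot s|^{p-3}$ that will arise.

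Fix $x$ with $\sigma(x)\neq 0$, set $e=\sigma(x)/|\sigma(x)|$, $\hat\delta=\hat\delta(x)$, and write $u(t,z)=\sigma(x+tz)/|\sigma(x+tz)|$, $v=z/|z|$, $m(s)=|e\cdot s|$. The first step is to record the following comparability on $O^+_{\delta,\epsilon}(x)$: because $|z|<\hat r(x,v)\leq \hat\delta/2$, I get $|\sigma(x+tz)|\in[|\sigma(x)|/2,3|\sigma(x)|/2]$ from~\eqref{eq:lbound_sigma}, and by~\eqref{eq:lbdot12} (applied with $s=v$, whose hypothesis is precisely $v\in S^+(x,|z|)$) I get $|\sigma(x+tz)\cdot v|\in[|\sigma(x)\cdot v|/2,3|\sigma(x)\cdot v|/2]$. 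Combining these two two-sided bounds yields
\[
  \tfrac{1}{3}\,|u(0,z)\cdot v|\leq |u(t,z)\cdot v|\leq 3\,|u(0,z)\cdot v|,\qquad t\in[0,1].
\]

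Next I use that $a\mapsto |a|^{p-2}$ is $C^1$ away from zero with derivative bounded by $|p-2||a|^{p-3}$, so on the interval $[m/3,3m]$ (with $m=|e\cdot v|$) it is Lipschitz with constant $|p-2|(m/3)^{p-3}$. A direct computation of $\partial_t u(t,z)$ gives $|\partial_t u(t,z)|\leq 4|z|\|\nabla\sigma\|_{L^\infty}/|\sigma(x)|$ on $O^+_{\delta,\epsilon}(x)$, so that $||u(t,z)\cdot v|-|u(0,z)\cdot v||\leq 4|z|\|\nabla\sigma\|_{L^\infty}/|\sigma(x)|$. Putting everything together produces the pointwise estimate
\[
  \Bigl|\,|u(t,z)\cdot v|^{p-2}-|e\cdot v|^{p-2}\,\Bigr|
  \leq C(\sigma,p,x)\,|e\cdot v|^{p-3}|z|,
  \qquad t\in[0,1],\ z\in O^+_{\delta,\epsilon}(x),
\]
whence, after integrating in $t$ and inserting $m(v)=|e\cdot v|$, the integrand in~\eqref{eq:cons_S} is bounded by $C'(\sigma,p,x)|\sigma(x)\cdot v|^{p-3}|z|$ (absorbing the power of $|\sigma(x)|$ into the constant).

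It remains to integrate this bound against $|z|^{p}\Wd^p(z)$. Passing to spherical coordinates $z=rs$, the integral becomes
\[
  C'(\sigma,p,x)\int_{\Sn}|\sigma(x)\cdot s|^{p-3}\int_{\epsilon}^{\min(\delta,\hat r(x,s))} r^{n+p}\Wd^p(r)\,\mathrm{d}r\,\mathrm{d}s.
\]
Because $\int_0^\delta r^{n+p-1}\Wd^p(r)\,\mathrm{d}r=K_{p,n}^{-1}/|\Sn|$ by~\eqref{eq:normalization} and the extra factor $r$ can be bounded by the upper limit of integration, I get $\int_\epsilon^{\min(\delta,\hat r(x,s))}r^{n+p}\Wd^p(r)\,\mathrm{d}r\leq \min(\delta,\hat r(x,s))K_{p,n}^{-1}/|\Sn|$. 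Splitting the sphere into $S^+(x,\delta)$ and $S^-(x,\delta)$ then gives two contributions: on $S^+(x,\delta)$ the upper limit is $\delta$, leaving $\delta\int_{S^+(x,\delta)}|\sigma(x)\cdot s|^{p-3}\mathrm{d}s\sim \delta\cdot\delta^{p-2}=\delta^{p-1}$; on $S^-(x,\delta)$ the upper limit is $\hat r(x,s)=|\sigma(x)\cdot s|/(2\|\nabla\sigma\|_{L^\infty})$, so this contribution is a constant multiple of $\int_{S^-(x,\delta)}|\sigma(x)\cdot s|^{p-2}\mathrm{d}s\sim\delta^{p-1}$ (both sphere integrals are standard Beta-type computations using that on the equatorial strip of width $\eta$ the measure is $O(\eta)$).

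The main obstacle is precisely the singular weight $|\sigma(x)\cdot s|^{p-3}$ with $p-3\in(-2,-1)$, which is non-integrable on $\Sn$ for $1<p<2$. This is overcome by the observation that $O^+_{\delta,\epsilon}(x)$ excludes a neighborhood of the ``equator'' $\{s:\sigma(x)\cdot s=0\}$ whose thickness scales with $\delta$; this geometric constraint, encoded in $\hat r(x,s)$, is exactly what turns the would-be divergent sphere integral into an $O(\delta^{p-1})$ quantity that vanishes because $p>1$.
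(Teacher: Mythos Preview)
Your proof is correct and in fact sharper than the paper's: you obtain an explicit rate $O(\delta^{p-1})$, whereas the paper only proves the qualitative limit.

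The paper takes a somewhat different route. Rather than bounding the \emph{difference} $|u(t,z)\cdot v|^{p-2}-|e\cdot v|^{p-2}$ pointwise, it bounds the integrand itself: from~\eqref{eq:lbdot12} one gets $|u(t,z)\cdot v|^{p-2}\leq C|\sigma(x)\cdot s|^{p-2}$, and since $p-2>-1$ this dominating function lies in $L^1(\Sn)$. Pointwise in $s$ (away from the null set $\{\sigma(x)\cdot s=0\}$) a Taylor argument gives convergence of the radial integral, and the Lebesgue dominated convergence theorem finishes the job. By contrast, your Lipschitz estimate produces the genuinely non-integrable weight $|e\cdot v|^{p-3}$; you then recover control by exploiting the radial cutoff $\hat r(x,s)$ built into $O^+_{\delta,\epsilon}(x)$, which is exactly what converts the divergent sphere integral into an $O(\delta^{p-1})$ quantity. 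Your approach is more hands-on and yields quantitative information; the paper's is softer but avoids ever confronting the $|e\cdot s|^{p-3}$ singularity, since the integrable $|e\cdot s|^{p-2}$ suffices for dominated convergence.
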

\begin{proof}
  Recall that we have the inequality~\eqref{eq:lbdot12} bounding the inner product 
  \(\sigma(x+tz)\cdot z/|z|\), whence also \(|\sigma(x+tz)|\), away from zero for \(z/|z| \in S^+(x,|z|)\), uniformly with respect to \(|z|\in [0,\hat{r}(x,z/|z|))\) and \(t\in [0,1]\).
  Therefore,  the upper bound 
  \begin{equation}\label{eq:bndpm2}\begin{aligned}
  \left| \frac{\sigma(x+rts)}{|\sigma(x+rts)|} \cdot s\right|^{p-2}
  &\leq 
  2^{2-p}|\sigma(x)\cdot s|^{p-2}
  \sup_{z \in B(0,\delta)}|\sigma(x+z)|^{2-p} 
  \\&\leq [2\|\sigma\|_{L^\infty(\Rn;\Rn)}]^{2-p}|\sigma(x)\cdot s|^{p-2} \in L^1(\Sn),
  \end{aligned}\end{equation}
  holds uniformly for \(0<r<\hat{r}(x,s)\) and \(t\in[0,1]\).
  Furthermore, for each \(s\in \Sn\) outside of the null set \(S_0=\{\, s\in \Sn \colon |\sigma(x)\cdot s|=0\,\}\),
  as soon as \(\delta < \hat{r}(x,s)\) we can apply Taylor's theorem to conclude that
  \[\begin{aligned}
    &\lim_{\epsilon\searrow 0}\int_{\epsilon}^{\delta} r^{n-1}
    \int_{0}^{1}\left| \frac{\sigma(x+rts)}{|\sigma(x+rts)|} \cdot s\right|^{p-2}\,\mathrm{d}t\,
    |r|^{p}\Wd^p(r)\,\mathrm{d}r 
    \\&
    =
    |\Sn|^{-1}\left| \frac{\sigma(x)}{|\sigma(x)|} \cdot s\right|^{p-2}
    \underbrace{\lim_{\epsilon\searrow 0}
    \int_{\epsilon}^{\delta} r^{n-1}|\Sn|
    |r|^{p}\Wd^p(r)\,\mathrm{d}r}_{=K_{p,n}^{-1}} + O(\delta)
    \\&
    =
    K_{p,n}^{-1}|\Sn|^{-1}\left| \frac{\sigma(x)}{|\sigma(x)|} \cdot s\right|^{p-2} + O(\delta),
  \end{aligned}\]
  where the constant depends on \(s\in \Sn\setminus S_0\) but is independent from small \(\delta>0\).
As a consequence, dominated Lebesgue convergence theorem applies to the family of functions
\[
  s\mapsto \lim_{\varepsilon\searrow 0}\int_{\epsilon}^{\min\{\delta,\hat{r}(x,s)\}}r^{n-1}\int_{0}^{1}
  \left| \frac{\sigma(x+rts)}{|\sigma(x+rts)|} \cdot s\right|^{p-2}\,\mathrm{d}t\,
  r^{p}\Wd^p(r)\,\mathrm{d}r,
\]
and we have
\[
  \lim_{\delta\searrow 0}\lim_{\epsilon\searrow 0}
  \int_{\Sn}  \left|
  \int_{\epsilon}^{\min\{\delta,\hat{r}(x,s)\}}r^{n-1}\int_{0}^{1}
  \left| \frac{\sigma(x+rts)}{|\sigma(x+rts)|} \cdot s\right|^{p-2}\,\mathrm{d}t\,
  r^{p}\Wd^p(r)\,\mathrm{d}r - 
  K_{p,n}^{-1}|\Sn|^{-1}\left| \frac{\sigma(x)}{|\sigma(x)|} \cdot s\right|^{p-2}
  \right|\,\mathrm{d}s = 0,
\]
which concludes the proof.
\end{proof}

\begin{proposition}\label{prop:ptwise_plt2}
  Assume that \(\sigma \in C^2_c(\Rn;\Rn)\).
  Then
  \begin{equation}\label{eq:cons_plt2_lim}
    \lim_{\delta\searrow 0}\lim_{\epsilon\searrow 0} J_{p,\delta,\epsilon}[\sigma](x) = \Div\sigma(x), \qquad \text{for almost all \(x\in \Rn\)}.
  \end{equation}
\end{proposition}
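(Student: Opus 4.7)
The plan is to follow the three-case structure of Proposition~\ref{prop:consistency_pgt2}. The two easy cases transfer almost verbatim. For \(\sigma(x)=0\) and \(\nabla\sigma(x)=0\), the pointwise estimate \(|j_{p,\sigma,s}(y)| \leq |\sigma(y)|^{2-p}|\sigma(y)|^{p-1} = |\sigma(y)|\) combined with the second-order Taylor bound \(|\sigma(x+z)| = O(|z|^2)\) (available because \(\sigma \in C^2_c(\Rn;\Rn)\)) yields
\[
|J_{p,\delta,\epsilon}[\sigma](x)| \leq C\int_{B(0,\delta)}|z|^{p+1}\Wd^p(z)\,\mathrm{d}z \leq C\delta \int_{B(0,\delta)}|z|^p\Wd^p(z)\,\mathrm{d}z = O(\delta) \to 0;
\]
and the set \(\{\,x\colon \sigma(x)=0,\ \nabla\sigma(x)\ne 0\,\}\) is Lebesgue-null by the same implicit function theorem argument used in the proof of Proposition~\ref{prop:consistency_pgt2}.

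For the principal case \(\sigma(x)\ne 0\), I would split the annulus \(B(0,\delta)\setminus B(0,\epsilon) = O^+_{\delta,\epsilon}(x)\cup O^-_{\delta,\epsilon}(x)\) (see~\eqref{eq:S}) and analyse the two parts separately. On the ``bad'' set \(O^-\), the bound~\eqref{eq:That} for \(|S^-(x,r)|\) combined with the crude estimate \(|\sigma(x+rs)\cdot s|^{p-1} \leq (3r\|\nabla\sigma\|_{L^\infty(\Rn;\Rn)})^{p-1}\) valid on \(S^-(x,r)\) shows that the \(O^-\)-contribution to \(J_{p,\delta,\epsilon}[\sigma](x)\) is bounded by \(C\delta^{p-1}\int_0^\delta r^{n-1}r^p\Wd^p(r)\,\mathrm{d}r = O(\delta^{p-1}) \to 0\). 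On the ``good'' set \(O^+\), I exploit the antisymmetries \(O^+_{\delta,\epsilon}(x)=-O^+_{\delta,\epsilon}(x)\) and \(j_{p,\sigma,-s}(x)=-j_{p,\sigma,s}(x)\) to subtract off \(j_{p,\sigma,z/|z|}(x)\) without changing the integral, and then Taylor-expand along the segment from \(x\) to \(x+z\) via~\eqref{eq:taylor_i}, whose use is legitimate thanks to the uniform lower bound~\eqref{eq:lbdot12} on \(|\sigma(x+tz)\cdot s|\) for \(z\in O^+\).

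After applying the identity \(|\sigma|^{2-p}|\sigma\cdot s|^{p-2} = |\hat\sigma\cdot s|^{p-2}\) (with \(\hat\sigma = \sigma/|\sigma|\)), the Taylor integrand, once the ``moving'' point \(x+tz\) is frozen at \(y=x\), matches the integrand of Corollary~\ref{cor:crazytrace} with \(\hat e = \hat\sigma(x)\) and \(A = \nabla\sigma(x)\); its integral over the full annulus therefore tends to \(\Div\sigma(x)\) as \(\epsilon\to 0\). The freezing error splits into two parts: the replacement \(\nabla\sigma(x+tz)\to\nabla\sigma(x)\) contributes only \(O(\delta)\) by the Lipschitz continuity of \(\nabla\sigma\), while the more delicate replacement \(\left|\hat\sigma(x+tz)\cdot\tfrac{z}{|z|}\right|^{p-2}\to \left|\hat\sigma(x)\cdot\tfrac{z}{|z|}\right|^{p-2}\) is exactly what Proposition~\ref{prop:consistency_s} was tailor-made to control. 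Finally, the mismatch between the domain \(O^+\) of the frozen integral and the full annulus contributes another \(O(\delta^{p-1})\) term, treated just like the original \(O^-\)-piece.

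The main obstacle is the singularity of \(s\mapsto|\hat\sigma(x)\cdot s|^{p-2}\) when \(p<2\): it forbids any naive pointwise bound on \(O^-\) and is the whole reason both for the good/bad decomposition and for the dominated-convergence argument needed in the freezing step. Both difficulties are, however, precisely what Lemma~\ref{lem:Lip_plt2} and Proposition~\ref{prop:consistency_s} prepare for, so with these tools in hand the scheme above goes through.
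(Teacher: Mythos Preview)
Your proposal is correct and follows essentially the same route as the paper: the same three-case split inherited from Proposition~\ref{prop:consistency_pgt2}, the same \(O^{+}/O^{-}\) decomposition via~\eqref{eq:S} in the principal case, the same use of antisymmetry and~\eqref{eq:taylor_i} on \(O^{+}\), and the same appeal to Proposition~\ref{prop:consistency_s} for the singular \(|\hat\sigma\cdot s|^{p-2}\) factor before invoking Corollary~\ref{cor:crazytrace}. Your \(O(\delta^{p-1})\) bookkeeping for the \(O^{-}\) contribution and the domain mismatch is in fact slightly sharper than the paper's own phrasing, which simply notes \(|O^{-}_{\delta,\epsilon}(x)|=O(\delta^2)\) at that step.
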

\begin{proof}
  Note that cases 2 and 3 in the proof of Proposition~\ref{prop:consistency_pgt2} apply without any change.  Therefore, we will focus on the remaining case \(\sigma(x)\neq 0\).
  We remind the reader of the notation~\eqref{eq:S}.

  We begin by dealing with the integral over \(O^-_{\delta,\epsilon}(x)\).
  Recall the upper estimate of the measure of \(S^-(x,r)\) provided by~\eqref{eq:That},
  which implies that 
  \[|O^-_{\delta,\epsilon}(x)| = \int_{\epsilon}^{\delta} |S^-(x,r)|\,\mathrm{d}r= O(\delta^2).\]
  Furthermore, 
  for each \(z\in O^-_{\delta,\epsilon}(x)\) we have the estimate
  \begin{equation}\label{eq:i_Sminus}\begin{aligned}
    |j_{p,\sigma,z/|z|}(x+z)|&=
    |\sigma(x+z)|^{2-p}|\sigma(x+z)\cdot z/|z||^{p-1}
    \\&\leq
    |\sigma(x+z)|^{2-p}\underbrace{\big||\sigma(x+z)\cdot z/|z||^{p-1}
    - |\sigma(x)\cdot z/|z||^{p-1}\big|}_{\leq 
    [|z|\|\nabla \sigma\|_{L^\infty(\Rn;\R^{n\times n})}]^{p-1}}
    \\&+ |\sigma(x+z)|^{2-p}\underbrace{|\sigma(x)\cdot z/|z||^{p-1}}_{\leq 
    [2|z|\|\nabla \sigma\|_{L^\infty(\Rn;\R^{n\times n})}]^{p-1}}
    =O(\delta^{p-1}),
  \end{aligned}\end{equation}
  where we have utilized triangle inequality, H{\"o}lder continuity and monotonicity of the map \(\alpha\mapsto |\alpha|^{p-1}\), and the upper estimate on \(\big|\sigma(x)\cdot z/|z|\big|\) for \(z/|z|\in S^-(x,|z|)\).
  Combining~\eqref{eq:That} and~\eqref{eq:i_Sminus} yields the estimate
  \[\begin{aligned}\bigg|\int_{O^-_{\delta,\epsilon}(x)}
  j_{p,\sigma,z/|z|}(x+z) |z|^{p-1}\Wd^p(z)\,\mathrm{d}z\bigg|
  &\leq \sup_{z\in O^-_{\delta,\epsilon}(x)}|j_{p,\sigma,z/|z|}(x+z)| 
  \int_{\epsilon}^{\delta} \underbrace{|S^-(x,r)|}_{O(r)} r^{p-1}\Wd^{p}(r)\,\mathrm{d}r
  \\&\leq O(\delta^{p-1}),
  \end{aligned}\]
  with the constant depending on \(p\), \(n\), \(|\sigma(x)|^{-1}\), \(\|\sigma\|_{L^\infty(\Rn;\Rn)}\), and \(\|\nabla\sigma\|_{L^\infty(\Rn;\R^{n\times n})}\).

  We now proceed to the integrals over \(O^+_{\delta,\epsilon}\).
  In view of the bound~\eqref{eq:lbdot12} for \(z\in O^+_{\delta,\epsilon}\) we can apply Taylor's theorem to \(j_{p,\sigma,z/|z|}\) in the vicinity of the line segment connecting \(x\) and \(x+z\). The resulting Taylor polynomial terms are given in~\eqref{eq:Taylor0}, \eqref{eq:Taylor11}, and \eqref{eq:Taylor12}.

  As in Proposition~\ref{prop:consistency_pgt2}, the zero order term integrates to zero over \(O^+_{\delta,\epsilon}(x)=-O^+_{\delta,\epsilon}(x)\):
  \[
      \int_{O^+_{\delta,\epsilon}(x)}
      j_{p,\sigma,z/|z|}(x) |z|^{p-1}\Wd^p(z)\,\mathrm{d}z = 0.
  \]
  One of the first order terms is treated similarly to the term \eqref{eq:Taylor11} in Proposition~\ref{prop:consistency_pgt2}:
  \begin{equation*}\begin{aligned}
    &\int_{O^+_{\delta,\epsilon}(x)}\int_{0}^1
    \underbrace{\left[\frac{\sigma(x+tz)^T}{|\sigma(x+tz)|}\nabla\sigma(x+tz)\frac{z}{|z|}\right]}_{\text{Lipschitz}}
    \underbrace{\left|\frac{\sigma(x+tz)}{|\sigma(x+tz)|}\cdot\frac{z}{|z|}\right|^{p-2}
      \left[\frac{\sigma(x+tz)}{|\sigma(x+tz)|}\cdot\frac{z}{|z|}\right]}_{\text{H{\"o}lder}}
      \,\mathrm{d}t|z|^{p}\Wd^p(z)\,\mathrm{d}z
      \\&=
      \int_{O^+_{\delta,\epsilon}(x)}
      \left[\frac{\sigma(x)^T}{|\sigma(x)|}\nabla\sigma(x)\cdot \frac{z}{|z|}\right]
        \left|\frac{\sigma(x)}{|\sigma(x)|}\cdot\frac{z}{|z|}\right|^{p-2}
        \left[\frac{\sigma(x)}{|\sigma(x)|}\cdot\frac{z}{|z|}\right]
        |z|^{p}\Wd^p(z)\,\mathrm{d}z + O(\delta^{p-1})
      \\&=
      \int\limits_{B(0,\delta)\setminus B(0,\epsilon)}
      \left[\frac{\sigma(x)^T}{|\sigma(x)|}\nabla\sigma(x)\cdot \frac{z}{|z|}\right]
        \left|\frac{\sigma(x)}{|\sigma(x)|}\cdot\frac{z}{|z|}\right|^{p-2}
        \left[\frac{\sigma(x)}{|\sigma(x)|}\cdot\frac{z}{|z|}\right]
        |z|^{p}\Wd^p(z)\,\mathrm{d}z + O(\delta^{p-1}),
  \end{aligned}\end{equation*}
  where in the last equality we have utilized the continuity of the Lebesgue integral and the estimate \(|O^-_{\delta,\epsilon}(x)|=O(\delta^2)\).

  The other first order term is more troublesome:
  \begin{equation*}\begin{aligned}
    &\int_{O^+_{\delta,\epsilon}(x)}
    \int_{0}^1
    \underbrace{\left|\frac{\sigma(x+tz)}{|\sigma(x+tz)|}\cdot\frac{z}{|z|}\right|^{p-2}}_{\leq [2\|\sigma\|_{L^\infty(\Rn;\Rn)}]^{2-p}|\sigma(x)\cdot z/|z||^{p-2}}
    \underbrace{\left[\frac{z^T}{|z|}\nabla\sigma(x+tz)\frac{z}{|z|}\right]}_{\text{Lipschitz}}
    \,\mathrm{d}t
    |z|^{p}\Wd^p(z)\,\mathrm{d}z
    \\&=\int_{O^+_{\delta,\epsilon}(x)}
    \int_{0}^1
    \left|\frac{\sigma(x+tz)}{|\sigma(x+tz)|}\cdot\frac{z}{|z|}\right|^{p-2}
    \left[\frac{z^T}{|z|}\nabla\sigma(x)\frac{z}{|z|}\right]
    \,\mathrm{d}t
    |z|^{p}\Wd^p(z)\,\mathrm{d}z + O(\delta)
    \\&=\int_{\Sn}
    \left[s^T\nabla\sigma(x)s\right]
    \int_{\epsilon}^{\min\{\hat{r}(x,s),\delta\}} r^{n-1}
    \int_{0}^1
    \left|\frac{\sigma(x+rts)}{|\sigma(x+rts)|}\cdot s\right|^{p-2}
    \,\mathrm{d}t\,
    r^{p}\Wd^p(r)\,\mathrm{d}r\,\mathrm{d}s + O(\delta),\\
  \end{aligned}\end{equation*}
  in view of the bound~\eqref{eq:bndpm2}.
  To the last integral we can apply Proposition~\ref{prop:consistency_s}, which does not provide us with an error estimate but is sufficient for establishing its convergence and  identifying its limit:
  \begin{equation*}
    \begin{aligned}
      \lim_{\delta\searrow 0}\lim_{\epsilon\searrow 0}\bigg\{
        &\int_{O^+_{\delta,\epsilon}(x)}
      \int_{0}^1
   \left|\frac{\sigma(x+tz)}{|\sigma(x+tz)|}\cdot\frac{z}{|z|}\right|^{p-2}
    \left[\frac{z^T}{|z|}\nabla\sigma(x+tz)\frac{z}{|z|}\right]
    \,\mathrm{d}t
    |z|^{p}\Wd^p(z)\,\mathrm{d}z
    \\-&\int_{B(0,\delta)\setminus B(0,\epsilon)}
    \left|\frac{\sigma(x)}{|\sigma(x)|}\cdot\frac{z}{|z|}\right|^{p-2}
    \left[\frac{z^T}{|z|}\nabla\sigma(x)\frac{z}{|z|}\right]
    |z|^{p}\Wd^p(z)\,\mathrm{d}z
      \bigg\}=0,
    \end{aligned}
  \end{equation*}
  where we have extended the integration domain of the last integral from \(O^+_{\delta,\epsilon}(x)\) to \(B(0,\delta)\setminus B(0,\epsilon)\) since \(|O^-_{\delta,\epsilon}(x)|=O(\delta^2)\).
  It remains to appeal to Corollary~\ref{cor:crazytrace} to conclude the proof.
\end{proof}

\subsection{Numerical verification}

Jupyter notebooks allowing one to numerically verify the equality asserted in Lemma~\ref{lem:crazytrace} or the pointwise convergence established in Propositions~\ref{prop:consistency_pgt2} and~\ref{prop:ptwise_plt2} are publicly available via GitHub under \url{https://github.com/anev-aau/nloc_dual_lapl_p}.

\section*{Acknowledgment}
 The authors are gratefult to Prof.~Horia Cornean for multiple fruitful discussions of the material in Section~\ref{sec:Fconsist}.

\bibliographystyle{elsarticle-num}
\bibliography{nloc_dual_lapl_p}

\end{document}